 \setlist{leftmargin=2\parindent}
\newenvironment{smatrix}{\left( \begin{smallmatrix} } {\end{smallmatrix} \right) }
\newcommand{\stbt}[4]{\begin{smatrix}#1 & #2 \\ #3 & #4\end{smatrix}}
\theoremstyle{plain}
\newtheorem{theorem}{Theorem}[subsection]
\newtheorem{lemma}[theorem]{Lemma}
\newtheorem{proposition}[theorem]{Proposition}
\newtheorem{corollary}[theorem]{Corollary}
\newtheorem{definition}[theorem]{Definition}
\newtheorem{notation}[theorem]{Notation}
\newtheorem{assumption}[theorem]{Assumption}
\theoremstyle{remark}
\declaretheorem[name=Remark,sibling=theorem,qed={\lower-0.3ex\hbox{$\diamond$}}]{remark}
\declaretheorem[name=Note,sibling=theorem,qed={\lower-0.3ex\hbox{$\diamond$}}]{note}
\DeclareMathOperator{\Fil}{Fil}
\DeclareMathOperator{\GL}{GL}
\DeclareMathOperator{\GSp}{GSp}
\DeclareMathOperator{\Gal}{Gal}
\DeclareMathOperator{\Gr}{Gr}
\DeclareMathOperator{\Hom}{Hom}
\DeclareMathOperator{\Iw}{Iw}
\DeclareMathOperator{\Kl}{Kl}
\DeclareMathOperator{\LE}{\mathcal{LE}}
\DeclareMathOperator{\NNfp}{NN-fp}
\DeclareMathOperator{\NNsyn}{NN-syn}
\DeclareMathOperator{\Sieg}{Si}
\DeclareMathOperator{\Spec}{Spec}
\DeclareMathOperator{\Sym}{Sym}
\DeclareMathOperator{\coh}{coh}
\DeclareMathOperator{\crit}{crit}
\DeclareMathOperator{\diag}{diag}
\DeclareMathOperator{\fp}{fp}
\DeclareMathOperator{\lrigfp}{lrig-fp}
\DeclareMathOperator{\lrigsyn}{lrig-syn}
\DeclareMathOperator{\mot}{mot}
\DeclareMathOperator{\pr}{pr}
\DeclareMathOperator{\rigfp}{rig-fp}
\DeclareMathOperator{\sph}{sph}
\DeclareMathOperator{\st}{st}
\DeclareMathOperator{\syn}{syn}
\DeclareMathOperator{\tr}{tr}
\newcommand{\f}{\mathrm{f}}
\renewcommand{\AA}{\mathbf{A}}
\newcommand{\Gh}{\widehat{G}}
\newcommand{\Gt}{\widetilde{G}}
\newcommand{\fa}{\mathfrak{a}}
\newcommand{\fb}{\mathfrak{b}}
\newcommand{\Af}{\AA_{\mathrm{f}}}
\newcommand{\cc}{\mathbf{c}}
\newcommand{\Dcris}{\mathbf{D}_{\mathrm{cris}}}
\newcommand{\DdR}{\mathbf{D}_{\mathrm{dR}}}
\newcommand{\Eis}{\mathrm{Eis}}
\newcommand{\ucG}{\underline{\mathcal{G}}}
\newcommand{\Pif}{\Pi_{\f}}
\newcommand{\Sif}{\Sigma_{i,\f}}
\newcommand{\QQbar}{\overline{\QQ}}
\newcommand{\QQ}{\mathbf{Q}}
\newcommand{\RR}{\mathbf{R}}
\newcommand{\Qp}{\QQ_p}
\newcommand{\ZZ}{\mathbf{Z}}
\newcommand{\Zp}{\ZZ_p}
\newcommand{\VV}{\mathbf{V}}
\newcommand{\cD}{\mathcal{D}}
\newcommand{\cE}{\mathcal{E}}
\newcommand{\cG}{\mathcal{G}}
\newcommand{\cH}{\mathcal{H}}
\newcommand{\cL}{\mathcal{L}}
\newcommand{\cN}{\mathcal{N}}
\newcommand{\cO}{\mathcal{O}}
\newcommand{\cS}{\mathcal{S}}
\newcommand{\cV}{\mathcal{V}}
\newcommand{\cW}{\mathcal{W}}
\newcommand{\cX}{\mathcal{X}}
\newcommand{\cY}{\mathcal{Y}}
\newcommand{\hY}{\hat{Y}}
\newcommand{\hiota}{\hat\iota}
\newcommand{\can}{\mathrm{can}}
\newcommand{\ch}{\mathrm{ch}}
\newcommand{\dep}{\mathrm{dep}}
\newcommand{\dR}{\mathrm{dR}}
\newcommand{\et}{\text{\textup{\'et}}}
\newcommand{\into}{\hookrightarrow}
\newcommand{\ord}{\mathrm{ord}}
\newcommand{\res}{\mathrm{res}}
\newcommand{\sFil}{{\mathscr Fil}}
\newcommand{\sF}{\mathscr{F}}
\newcommand{\sH}{\mathscr{H}}
\newcommand{\sW}{\mathscr{W}}
\newcommand{\wH}{\widetilde{H}}
\numberwithin{equation}{section}
\renewcommand{\le}{\leqslant}
\renewcommand{\leq}{\leqslant}
\renewcommand{\ge}{\geqslant}
\renewcommand{\geq}{\geqslant}
\author{David Loeffler}
\author{Sarah Livia Zerbes}
\title{On $p$-adic regulators for $\GSp(4)\times \GL(2)$ and $\GSp(4)\times \GL(2) \times \GL(2)$}
\thanks{Supported by the following grants: Royal Society University Research Fellowship ``$L$-functions and Iwasawa theory'' and EPSRC Standard Grant EP/S020977/1 (Loeffler); ERC Consolidator Grant ``Euler systems and the Birch--Swinnerton-Dyer conjecture'' (Zerbes).}
\begin{document}
 \renewcommand{\crefrangeconjunction}{--} % needs to not be in preamble ?!

% \frontmatter

\begin{abstract}
 We compute the images of motivic cohomology classes for $\GSp_4\times\GL_2$ and $\GSp_4 \times \GL_2 \times \GL_2$ under the syntomic regulator, and relate them to non-critical values of suitable $p$-adic $L$-functions.
\end{abstract}

 \maketitle

%\vspace{-10cm}

 \makeatletter
 \patchcmd{\@tocline}
 {\hfil}
 {\leaders\hbox{\,.\,}\hfil}
 {}{}
 \makeatother

 \setcounter{tocdepth}{1}
 \tableofcontents

\section{Introduction}

  The main results of \cite{HJS20} and \cite[\S 8]{LZvista} concern the construction of classes in the motivic cohomology of the Shimura varieties associated to the groups  $\GSp_4\times\GL_2$ and $\GSp_4 \times \GL_2 \times \GL_2$. These classes are obtained as the pushforward of suitable classes in the motivic cohomology of the product of two modular curves, similar to the construction of an Euler system for the Shimura variety of $\GSp_4$ in \cite{LSZ17}. They can be shown to vary in $p$-adic families (in the former case, they form an Euler system, while the latter construction the vary in Hida families, analogous to the construction of  diagonal cycles in \cite{darmonrotger14}), and they hence have potential applications to the Bloch--Kato conjecture and the Iwasawa Main Conjecture for the Galois representations attached to automorphic forms of the underlying Shimura variety. The missing piece of information for these arithmetic applications is  a so-called \emph{explicit reciprocity law},  relating the cohomology classes to critical values of the $L$-functions of the automorphic representations.

  In \cite{LZ20}, we proved such an explicit reciprocity law for the Euler system attached to the Shimura variety of $\GSp(4)$. This strategy of the proof consisted roughly of two steps: firstly, to compute the image of the Euler system under the syntomic regulator, identifying it with a non-critical value of a $p$-adic $L$-function, and secondly, to use $p$-adic deformation to  relate the Euler system to critical values of the $p$-adic $L$-function.

  In this paper, we carry out the first step for the cohomology classes mentioned above. The strategy of the computation is parallel to that in the $\GSp(4)$-case; the only fundamental difference is the replacement of one (resp. both) $\GL_2$-Eisenstein classes in the $\GSp(4)$-case by one (resp. two) classes arising from $\GL_2$-cusp forms. As in the $\GSp(4)$ case, our arguments are conditional on a vanishing statement for certain eigenspaces in rigid cohomology (Conjecture 10.2.3 in \cite{LZ20}), which will be treated in forthcoming work of Lan and Skinner. The question of relating the cohomology classes to critical values of the $p$-adic $L$-function, as well as the arithmetic applications, will be the topic of a forthcoming paper.

\subsubsection*{Acknowledgements.} We would like to thank Chi-Yun Hsu, Zhaorong Jin and Ryotaro Sakamoto for answering our questions about their work on the $\GSp_4\times\GL_2$ Euler system.

%%%%%%%%%%%%%%%%%%%%%%%%%%%%%%%%%%%%%%%%%%%%%%%%%%%%%%%%%%%%%%%%%%%%%%%%%%%%%%%%

%%%%%%%%%%%%%%%%%%%%%%%%%%%%%%%%%%%%%%%%%%%%%

\section{Preliminaries}\label{sec:prelim}

 \subsection{Conventions}\label{ss:conventions}

  In this paper, $p$ is a prime. As in \cite[\S 2]{LSZ17}, $G$ denotes the symplectic group $\GSp_4$, $P_{\Sieg}$ and $P_{\Kl}$ denote its standard Siegel and Klingen parabolic subgroups. Define the groups
  \[ \Gh = G \times \GL_2\qquad \text{and}\qquad \Gt=G\times \GL_2\times \GL_2, \]
  where the map $G\rightarrow \GL_1$ is the symplectic multiplier and $\GL_2\rightarrow \GL_1$ is the determinant.  Let $H$ denote the group $\GL_2 \times_{\GL_1} \GL_2$. We consider $H$ as a subgroup of $G$, $\Gh$ and $\Gt$ via the embeddings
  \begin{align*}
    \iota: H\hookrightarrow G,\quad &\left[\begin{pmatrix} a & b \\ c & d\end{pmatrix},
    \begin{pmatrix} a' & b'\\ c'& d'\end{pmatrix}\right]\mapsto
    \left[ \begin{smatrix} a &&& b\\ & a' & b' & \\ & c' & d' & \\ c &&& d \end{smatrix}\right], \\
    \hat\iota:H\hookrightarrow \Gh,\quad & (h_1,h_2)\mapsto \left(\iota(h_1,h_2),\, h_2\right),\\
    \tilde\iota: H\hookrightarrow \Gt,\quad & (h_1,h_2)\mapsto \left(\iota(h_1,h_2),\, h_1,h_2\right)
  \end{align*}

  \subsubsection*{Characters} If $\chi: (\ZZ / N)^\times \to A^\times$ is a character, for some ring $A$, then we let $\widehat{\chi}$ denote the character $\AA^\times / \QQ^\times\RR^{\times}_{> 0} \to A^\times$ such that $\widehat{\chi}(\varpi_\ell) = \chi(\ell)$ for primes $\ell \nmid N$, where $\varpi_\ell$ is a uniformizer at $\ell$. Note that the restriction of $\widehat{\chi}$ to $\widehat{\ZZ}^\times \subset \Af^\times$ is given by $z \mapsto \chi(z^{-1} \bmod N)$.

%%%%%%%%%%%%%%%%%%%%%%%%%%%%%%%%%%%%%

\subsection{Branching laws}
 \label{ss:branching1}

 \begin{definition} \
  \label{def:repsHandV}
  \begin{enumerate}[(a)]
   \item Write $\cH$ for the standard representation of $\GL_2$, following \cite[Def. 7.6.1]{LZ20}. {\em (Note that this is the opposite of the convention in \cite{LSZ17}.)}
   \item For $0\leq r_2\leq r_1$, write $V(r_1, r_2)$ for the representation $V(r_1,r_2;r_1+r_2)$ of $G$ (with conventions as in \cite{LPSZ1}).
  \end{enumerate}
 \end{definition}

 \begin{proposition}
  \label{prop:branchingG}

  Let $r_1 \ge r_2 \ge 0$ be integers, and let $t_1, t_2$ be integers with $t_1 + t_2 = r_1 + r_2 \bmod 2$ and
  \[ |t_1 - t_2| \le r_1 - r_2, \qquad r_1 - r_2 \le t_1 + t_2 \le r_1 + r_2.\]

  Then:

  \begin{enumerate}[(a)]
   \item If $V$ denotes the representation $V(r_1, r_2)$ of $G$, then we have
   \begin{align*}
    \Sym^{t_1}\cH^\vee\boxtimes \Sym^{t_2}\cH^\vee \ &\subset\  \iota^*\left(V^\vee\otimes\mu^q\right),\\
    \intertext{
     where $q = \tfrac{1}{2}(r_1 + r_2 -t_1 - t_2) \in \ZZ_{\ge 0}$.
     \item If $\hat{V}$ denotes the representation $V \boxtimes \Sym^{t_2} \cH$ of $\Gh$, then
    }
   \Sym^{t_1}\cH^\vee \boxtimes \mathbf{1} \ &\subset\  \hat{\iota}^*\left(\hat{V}^\vee\otimes\mu^{(q + t_2)}\right).\\
   \intertext{
    \item If $\tilde{V}$ denotes the representation $V\boxtimes \Sym^{t_1}\cH\boxtimes \Sym^{t_2}\cH$ of $\Gt$, then
   }
   \mathbf{1} \boxtimes \mathbf{1} \ &\subset\
    \tilde\iota^*\left(\tilde{V}^\vee\otimes\mu{}^{(q + t_1 + t_2)}\right).
   \end{align*}
  \end{enumerate}
  In each case, the subrepresentation appears with multiplicity one.
 \end{proposition}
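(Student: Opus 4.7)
The essential content is part (a); parts (b) and (c) will follow formally from (a) by a short Clebsch--Gordan argument, so my plan is to prove (a) first and then bootstrap.

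For (a), the embedding $\iota$ sends the maximal torus of $H$, namely $\{(\diag(a,d),\diag(a',d')) : ad=a'd'\}$, to $\diag(a,a',d',d)$ in $G$, and in particular $\iota^*\mu$ is the common determinant character $\chi$ of the two $\GL_2$-factors of $H$. The pair $(G,H)$ is spherical (here $H$ is the stabilizer in $G$ of a splitting of the standard symplectic space into two symplectic planes), so the restriction $\iota^*V(r_1,r_2)$ is multiplicity-free. I would then identify the $H$-summands by a direct Weyl-character-formula computation, restricting $\chi_{V(r_1,r_2)}$ to the image of $T_H$ inside $T_G$ and matching against $\chi_{\Sym^{t_1}\cH\boxtimes\Sym^{t_2}\cH}$ times a power of $\chi$; alternatively one can quote a standard branching rule for $\GSp_4\downarrow(\GL_2\times_{\GL_1}\GL_2)$. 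The constraints $|t_1-t_2|\le r_1-r_2$ and $r_1-r_2\le t_1+t_2\le r_1+r_2$, together with the parity $t_1+t_2\equiv r_1+r_2\pmod 2$, are exactly the conditions for a summand of this type to appear, and the exponent $q=\tfrac12(r_1+r_2-t_1-t_2)$ is pinned down by matching central characters.

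For (b) and (c), the key observation is that $\hat\iota^*\mu=\tilde\iota^*\mu=\chi$, so
\[ \hat\iota^*\hat V^\vee = \iota^*V^\vee \otimes (\mathbf{1}\boxtimes \Sym^{t_2}\cH^\vee), \qquad \tilde\iota^*\tilde V^\vee = \iota^*V^\vee \otimes (\Sym^{t_1}\cH^\vee \boxtimes \Sym^{t_2}\cH^\vee). \]
Tensoring the inclusion from (a) on each auxiliary $\GL_2$-factor by the corresponding $\Sym^{t_i}\cH^\vee$, and then using the Clebsch--Gordan identity $\Sym^t\cH\otimes\Sym^t\cH \supset \det^t$ (with multiplicity one), which after dualising and twisting gives $\Sym^t\cH^\vee\otimes\Sym^t\cH^\vee\otimes\det^t \supset \mathbf{1}$, extracts a trivial subrepresentation in each auxiliary factor at the cost of an extra $\det^{t_i}$ twist. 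Absorbing these twists into the multiplier produces precisely the shifts $\mu^{q+t_2}$ in (b) and $\mu^{q+t_1+t_2}$ in (c), and multiplicity one persists because both the branching in (a) and each Clebsch--Gordan extraction are individually multiplicity-free.

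The main obstacle I expect is not the branching or the Clebsch--Gordan step in isolation, but the bookkeeping required to verify the exact $\mu$-exponents: one must consistently track duals, central characters, and the compatibility between $\iota^*\mu$ and the two determinant characters of $H$, particularly as one passes between the three groups $G$, $\hat G$, $\tilde G$ with their distinct but compatible multiplier characters. Once this is set up correctly, the substantive representation-theoretic input is classical.
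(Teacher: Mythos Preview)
Your proposal is correct and follows essentially the same route as the paper. The paper does not prove (a) directly but cites it as \cite[Prop.~6.4]{LPSZ1}, whereas you outline a direct argument via sphericity and character computation; for (b) and (c) the paper simply says they ``follow readily from this, noting that $\cH^\vee = \cH \otimes \det^{-1}$'', which is precisely your Clebsch--Gordan extraction of $\det^{t_i}\subset\Sym^{t_i}\cH\otimes\Sym^{t_i}\cH$ followed by absorbing the resulting determinant twists into the multiplier.
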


 \begin{proof}
  Statement (a) is \cite[Prop. 6.4]{LPSZ1}. The other statements follow readily from this, noting that $\cH^\vee = \cH \otimes \det^{-1}$. (See also \cite[Cor. 3.2]{HJS20}.)
 \end{proof}

 \begin{remark}
  Note that these inequalities correspond to region (e) in \cite[Figure 2]{LZvista}. Conversely, one can check that an irreducible representation of $\Gt$ has a vector invariant under $\tilde{\iota}(H)$ if and only if it has the form given in (c), for some $r_1, r_2, t_1, t_2$ satisfying the stated inequalities (up to a twist by a character trivial on the image of $H$). Similar statements can be formulated for (a) and (b). (We will not use these statements directly, but they show that our statements are in some sense ``optimal''.)
 \end{remark}

%%%%%%%%%%%%%%%%%%%%%%%%%%%%%%%%%%%%%

 \subsection{Pushforward and pullback maps}

  We describe here the construction of motivic cohomology classes for the Shimura varieties attached to $G$, $\Gh$ and $\Gt$. The construction for $G$ is identical to \cite{LSZ17}, but we recall it here in parallel with the other two cases, in order to show the common aspects of the three constructions.

  \begin{definition}
   For $\star\in\{G,\Gh,\Gt, H,\GL_2\}$, write $Y_{\star}$ for the canonical $\QQ$-model of the Shimura variety attached to $\star$, and $Y_{\star, L}$ for its base-extension to a field $L \supseteq \QQ$.
  \end{definition}

  \begin{notation} Let $(r_1, r_2, t_1, t_2)$ be integers satisfying the conditions of \cref{prop:branchingG}.
   \begin{itemize}
    \item Denote by $\sH_{\QQ}$ the $\GL_2(\Af)$-equivariant relative Chow motive over $Y_{\GL_2}$ associated to the standard representation $\cH$ of $\GL_2$.

    \item Write  $\cV_{\QQ}$ for the $G(\AA_f)$-equivariant relative Chow motive of $V$ over $Y_G$.

    \item Write $\hat\cV_{\QQ}$ and $\tilde\cV_{\QQ}$ for the Chow motives associated to $\hat{V}$ and $\tilde{V}$ over $\hat{Y}$ and $\tilde{Y}$, respectively.

   \end{itemize}
  \end{notation}

  \begin{proposition}\label{lem:goodsubrep}
   Let $(r_1, r_2, t_1, t_2)$ be integers satisfying the conditions of \cref{prop:branchingG}. Then we have the following pushforward maps in motivic cohomology:
   \begin{subequations}
   \begin{align}
    \iota_*: H^2_{\mot}\left(Y_H(K \cap H), (\Sym^{t_1}\sH^\vee_{\QQ}\boxtimes\Sym^{t_2}\sH^\vee_{\QQ}) (2)\right)
    &\longrightarrow H^4_{\mot}\left(Y_G(K), \cV_{\QQ}^\vee(3-q)\right),\\
    \hat{\iota}_*: H^1_{\mot}\left(Y_H(\hat{K} \cap H), (\Sym^{t_1}\sH^\vee_{\QQ}\boxtimes \mathbf{1})(1)\right)
    &\longrightarrow H^5_{\mot}\left(Y_{\Gh}(\hat{K}),\hat\cV_{\QQ}^\vee(3-q-t_2)\right),\\
    \tilde{\iota}_*: H^0_{\mot}\left(Y_H(\tilde{K} \cap H), \QQ(0)\right)
    &\longrightarrow H^6_{\mot}\left(Y_{\Gt}(\tilde{K}), \tilde{\cV}_{\QQ}^\vee(3-q-t_1-t_2)\right).
   \end{align}
   \end{subequations}
   Here $K, \hat{K}, \tilde{K}$ are sufficiently small open compact subgroups of $G(\Af)$, $\Gh(\Af)$ and $\Gt(\Af)$ respectively.
  \end{proposition}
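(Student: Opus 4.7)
The plan is to construct each pushforward as the composition of the Gysin pushforward for the closed immersion of Shimura varieties induced by the respective embedding of algebraic groups, with the inclusion of coefficient sheaves supplied by the branching laws of \cref{prop:branchingG}. This mirrors the $G = \GSp_4$ construction of \cite{LSZ17} and handles the three cases uniformly.

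First I would verify that, for suitable choices of sufficiently small level groups $K, \hat{K}, \tilde{K}$, the group-theoretic embeddings $\iota, \hat\iota, \tilde\iota$ induce closed immersions of smooth $\QQ$-schemes $Y_H(K \cap H) \hookrightarrow Y_G(K)$, $Y_H(\hat K \cap H) \hookrightarrow Y_{\Gh}(\hat K)$ and $Y_H(\tilde K \cap H) \hookrightarrow Y_{\Gt}(\tilde K)$. Since $\dim Y_H = 2$, $\dim Y_G = 3$, $\dim Y_{\Gh} = 4$, $\dim Y_{\Gt} = 5$, these immersions have codimensions $c = 1, 2, 3$ respectively, accounting for the degree shifts $H^2 \to H^4$, $H^1 \to H^5$, $H^0 \to H^6$ visible in the three cases.

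Second, for each closed immersion $f : X \hookrightarrow Y$ of smooth varieties of codimension $c$ and each relative Chow motive $\cG$ on $Y$, I would invoke the Gysin pushforward
\[
 f_* : H^i_{\mot}\bigl(X, f^*\cG\bigr) \longrightarrow H^{i+2c}_{\mot}\bigl(Y, \cG(c)\bigr).
\]
Combined with any inclusion of relative Chow motives $\cF \hookrightarrow f^*\cG$ over $X$, this yields $H^i_{\mot}(X, \cF) \to H^{i+2c}_{\mot}(Y, \cG(c))$. Applying this to $\cG = \cV^\vee_\QQ \otimes [\mu^q]$ and $\cF = \Sym^{t_1}\sH^\vee_\QQ\boxtimes\Sym^{t_2}\sH^\vee_\QQ$ (and similarly in the other two cases), the containments furnished by \cref{prop:branchingG}(a)--(c) are precisely inclusions of the required shape.

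Third, I would reconcile the Tate twists. Using the convention of \cite{LSZ17, LPSZ1} that the similitude multiplier $\mu$ corresponds motivically to $\QQ(-1)$, the twist by $\mu^q$, $\mu^{q+t_2}$, $\mu^{q+t_1+t_2}$ in the branching becomes a Tate twist by $\QQ(-q)$, $\QQ(-q-t_2)$, $\QQ(-q-t_1-t_2)$ in the relative Chow motive. Assembling the source twists ($(2), (1), (0)$), the Gysin twists ($(1), (2), (3)$) and these $\mu$-twists, one recovers exactly the coefficients $\cV_\QQ^\vee(3-q)$, $\hat\cV_\QQ^\vee(3-q-t_2)$ and $\tilde\cV_\QQ^\vee(3-q-t_1-t_2)$ appearing in the statement. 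The main obstacle is essentially bookkeeping; the substantive input (the existence of the relevant $H$-invariant subrepresentations) is already packaged into \cref{prop:branchingG}, and the only genuine check is the compatibility of $Y_H$-level structures with $\hat\iota$ and $\tilde\iota$, which can be handled as in \cite[\S 8]{LSZ17}.
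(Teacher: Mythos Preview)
Your proposal is correct and matches the paper's (implicit) reasoning: the paper states this proposition without proof, treating it as an immediate consequence of the branching laws of \cref{prop:branchingG} combined with the standard Gysin formalism for closed immersions of Shimura varieties, exactly as you outline. Your codimension and Tate-twist bookkeeping is accurate, and the reduction to \cite{LSZ17} for the level-structure compatibility is the intended justification.
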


 \subsection{Lemma--Eisenstein maps}\label{ss:ESclass}

  \begin{notation}
   We let $\cS(\Af^2)$ denote the space of $\QQ$-valued Schwartz functions on $\Af^2$, with $\GL_2(\Af)$ acting by right-translation. We write $\cS_0(\Af^2)$ for the subspace of functions such that $\Phi(0, 0) = 0$.
  \end{notation}

  For any $t \in \ZZ_{\ge 0}$ we have a map, the \emph{motivic Eisenstein symbol} (c.f. \cite[\S 4.1]{KLZ20}),
  \[ \Eis^t_{\mot}: \cS_{(0)}(\Af^2) \to H^1_{\mot}\left(Y_{\GL_2}, \Sym^t(\sH^\vee_{\QQ})(1)\right), \]
  where $Y_{\GL_2}$ denotes the direct limit over all levels, and $\cS_{(0)}$ is read as $\cS_0$ if $t = 0$ and $\cS$ if $t \ge 1$. Thus we may compose the maps of \cref{lem:goodsubrep} with 2, 1 or 0 copies of the Eisenstein symbol, after normalising by appropriate volume factors to give a map which is compatible with pullback in the level.

  This gives maps
  \begin{subequations}
   \begin{align}
    \LE_{\mot}: \cS_{(0, 0)}(\Af^2 \times \Af^2) \otimes \cH(G(\Af))
    &\longrightarrow H^4_{\mot}\left(Y_G(K), \cV_{\QQ}^\vee(3-q)\right)[-q],\\
    \widehat{\LE}_{\mot}: \cS_{(0)}(\Af^2) \otimes \cH(\Gh(\Af))
    &\longrightarrow H^5_{\mot}\left(Y_{\Gh},\hat\cV_{\QQ}^\vee(3-q-t_2)\right)[-q-t_2],\\
    \widetilde{\LE}_{\mot}: \cH(\Gt(\Af))
    &\longrightarrow H^6_{\mot}\left(Y_{\Gt}, \tilde{\cV}_{\QQ}^\vee(3-q-t_1-t_2)\right)[-q-t_1-t_2].
   \end{align}
  \end{subequations}

  Here $\cS_{(0, 0)}(\Af^2 \times \Af^2)$ denotes functions vanishing identically along $\{(0, 0)\} \times \Af^2$ if $t_1 = 0$, and along $\Af^2 \times \{(0, 0)\}$ if $t_2 = 0$; and in the second formula, $\cS_{(0)}(\Af^2)$ denotes $\cS$ if $t_1 = 0$ and no restriction otherwise. The notation $[m]$ denotes twisting by the character $\|\mu\|^m$ of $G(\Af)$, as in \cite[\S 6.2]{LSZ17}. These maps satisfy equivariance properties for the actions of $H(\Af) \times G(\Af)$, $H(\Af) \times \Gh(\Af)$ and $H(\Af) \times \widetilde{G}(\Af)$ respectively, which we shall not spell out explicitly here; the case of $\LE_{\mot}$ is described in detail in \cite[\S 8.2]{LSZ17}, and the other two cases are similar.

  \begin{remark} See \cite{HJS20} for the case of $\widehat{G}$; the map denoted $\widehat{\LE}_{\mot}$ here is the ``symbol map'' of \emph{op.cit.}.
  \end{remark}

%%%%%%%%%%%%%%%%%%%%%%%%%%%%%%%%%%%%%%%%%%%%%%%%%%%%%%%%%%%%%%%%%%%%%%%%%%%%%%%%

%\svnid{$Id: spin_syntomic.tex 1126 2021-06-25 18:36:41Z davidloeffler $}

%%%%%%%%%%%%%%%%%%%
\section{Automorphic representations and Galois cohomology}
\label{sect:ES}
%%%%%%%%%%%%%%%%%%

 \subsection{The setting}

  The results of this paper will concern the following three settings:
  \begin{enumerate}
   \item[(A)] $\Pi$ is a cuspidal automorphic representation of $G$; and we study the 4-dimensional spin Galois representation attached to $\Pi$, and its character twists, using the map $\LE_{\mot}$.

   \item[(B)] $\Pi$ is as in (A), and $\Sigma_2$ is a cuspidal automorphic representation of $\GL_2$; and we study the 8-dimensional Galois representation associated to $\widehat{\Pi} = \Pi \boxtimes \Sigma_2$, viewed as a cuspidal automorphic representation of $\Gh$, using the map $\widehat{\LE}_{\mot}$.

   \item[(C)] $\Pi$ is as in (A), and $\Sigma_1, \Sigma_2$ are cuspidal automorphic representations of $\GL_2$, such that the product of the central characters of $\Pi, \Sigma_1$ and $\Sigma_2$ is trivial; and we study the 16-dimensional self-dual Galois representation associated to $\Pi \boxtimes \Sigma_1 \boxtimes \Sigma_2$, viewed as a cuspidal automorphic representation of $\Gt$, using the map $\widetilde{\LE}_{\mot}$.
  \end{enumerate}

  The goal of this section is to define, in each of the above cases, a map $\operatorname{Reg}^p_\nu: \mathscr{T}^p \to L$, where $p$ is a prime where our automorphic representations are unramified, $L$ is a finite extension of $\Qp$, and $\mathscr{T}^p$ is a suitable space of ``test data'' away from $p$, depending on $\Pi$ and the $\Sigma_i$. By construction, this map measures the local non-triviality of a global Galois cohomology class. In the following sections, we shall relate this map $\operatorname{Reg}^p_\nu$ to cup-products in coherent cohomology, which we shall interpret as special values of the $p$-adic $L$-functions of \cite{LPSZ1} and \cite{LZvista}.

  The construction of $\operatorname{Reg}^p_\mu$ in case (A) is already given in \cite{LZ20} (building on the results of \cite{LSZ17}). However, we shall still include case (A) in our account here, in order to clarify the relationship between this construction and the new results we are proving in case (B) and (C).

  \begin{remark}
   To some extent, cases (A) and (B) can be seen as ``degenerate cases'' of case (C), with $\Sigma_1$ in case (B) and both of the $\Sigma_i$ in case (A) replaced by spaces of Eisenstein series. However, there are limitations to this analogy: the definition of motivic classes is a little different between the three cases -- there does not seem to be a way to recover the maps $\LE_{\mot}$ or $\widehat{\LE}_{\mot}$ from $\widetilde{\LE}_{\mot}$.
  \end{remark}

 \subsection{Automorphic representations for $G$}
  \label{sect:heckeparams}

  \begin{definition}
   Let $(\Pi^H, \Pi^W)$ be a pair of non-endoscopic, non-CAP automorphic representations of $G(\AA_{\QQ})$ with the same finite part $\Pif$, as in \cite{LSZ17}, with $\Pi^W$ globally generic, discrete series at $\infty$ of weight $(k_1, k_2) = (r_1 + 3, r_2 + 3)$ for some integers $r_1 \ge r_2 \ge 0$. Let $\chi_{\Pi}$ be the Dirichlet character such that $\Pi$ has central character $\widehat{\chi}_{\Pi}$; note that $\chi_{\Pi}(-1) = (-1)^{r_1 + r_2}$.
  \end{definition}

  \begin{definition}
   We write $\Pif'$ for the ``arithmetically normalised'' twist $\Pif \otimes \|\cdot\|^{-(r_1 + r_2)/2}$, which is definable over a number field $E$. For any field $F \supseteq E$, we let $\mathscr{W}(\Pi_{\f}')_F$ be the vectors defined over $F$ in the Whittaker model of $\Pif'$, as in \cite{LZ20}.
  \end{definition}

  \begin{definition}
   Let $p$ be a prime such that $\Pi_p$ is unramified. We write $\alpha, \beta, \gamma, \delta$ for the Hecke parameters of $\Pi_{p}'$, and $P_p(X)$ for the polynomial $(1 - \alpha X) \dots (1 - \delta X)$.
  \end{definition}

  The Hecke parameters are algebraic integers in $\bar{E}$, and are well-defined up to the action of the Weyl group. Extending $E$ if necessary, we may assume that they lie in $E$ itself. They all have complex absolute value $p^{(r_1 + r_2 + 3)/2}$, and they satisfy $\alpha \delta = \beta \gamma = p^{(r_1 + r_2 + 3)} \chi_\Pi(p)$ (and $\chi_{\Pi}(p)$ is a root of unity).

  \begin{note}
   The polynomial $P_p(X)$ is consistent with the notation of Theorem 10.1.3 of \cite{LSZ17}, and in particular the local $L$-factor is given by
   \[ L(\Pi_p, s - \tfrac{r_1+r_2+3}{2}) = P_p\left(p^{-s}\right)^{-1} = \left[\left (1 - \tfrac{\alpha}{p^{s}}\right) \dots\right]^{-1}.\qedhere\]
  \end{note}

  We shall fix an embedding $E \into L \subset \QQbar_p$, where $L$ is a finite extension of $\Qp$, and let $v_p$ be the valuation on $L$ such that $v_p(p) = 1$. If we order $(\alpha, \beta, \gamma, \delta)$ in such a way that $v_p(\alpha) \le \dots \le v_p(\delta)$ (which is always possible using the action of the Weyl group), then we have the valuation estimates
  \[ v_p(\alpha) \ge 0,\qquad v_p(\alpha \beta) \ge r_2 + 1. \]

  \begin{definition}
   We say $\Pi$ is \emph{Siegel ordinary} at $p$ if $v_p(\alpha) = 0$, and \emph{Klingen ordinary} at $p$ if $v_p(\alpha\beta) = r_2 + 1$ (and \emph{Borel ordinary} if it is both Siegel and Klingen ordinary).
  \end{definition}

  Taking $V = V(r_1, r_2; r_1 + r_2)$ as in Definition \ref{def:repsHandV}, for each level $U \subset G(\Af)$, the $\Pif'$-isotypical part of $H^3_{\et,c}(Y_G(U)_{\QQbar}, \cV) \otimes_{\Qp} L$ is isomorphic to the sum of $\dim\left( \Pif^U\right)$ copies of a 4-dimensional $L$-linear Galois representation, uniquely determined up to isomorphism, whose semisimplification is the representation $\rho_{\Pi, p}$ associated to $\Pi$ \cite[\S\S 10.1-10.2]{LSZ17}. This is characterised by the relation
  \[ \det\left(1 - X \rho_{\Pi,p}(\operatorname{Frob}_\ell^{-1})\right) = P_\ell(X)\]
  for good primes $\ell$, where $\operatorname{Frob}_\ell$ is an arithmetic Frobenius at $\ell$.

  \begin{definition}
   We define
   \[ V_p(\Pi) \coloneqq \Hom_{L[G(\Af)]}\Big(\mathscr{W}(\Pi_{\f}')_L, H^3_{\et, c}( Y_{G, \QQbar}, \cV_L) \Big).\]
  \end{definition}

  This is a 4-dimensional $L$-linear representation of $\Gal(\overline{\QQ}/\QQ)$ which is a canonical representative of the isomorphism class $\rho_{\Pi, p}$. We therefore obtain a canonical isomorphism
  \[
   \sW(\Pif')_L = \Hom_{\Gal(\QQbar/\QQ)}\Big(V_p(\Pi), H^3_{\et,c}(Y_{G,\QQbar}, \cV_L)[\Pif']\Big).
  \]
  Using the canonical\footnote{This duality depends on a choice of Haar measure on $G(\Af)$; we choose this such that $G(\widehat{\ZZ})$ has volume 1.} duality of $G(\Af) \times \Gal(\QQbar/\QQ)$-representations
  \[
  \big\langle\!\!\big\langle -, -\big\rangle\!\!\big\rangle_G:  \Big(H^3_{\et,c}(Y_{G,\QQbar}, \cV_L)\Big) \times \Big(H^3_{\et}(Y_{G,\QQbar}, \cV_L^\vee(3))\Big) \to L,
  \]
  we can interpret elements of $\sW(\Pif')_L$ as homomorphisms of Galois representations $H^3_{\et}(Y_{G,\QQbar}, \cV_L^\vee(3)) \to V_p(\Pi)^*$, i.e.~as \emph{modular parametrisations} of $V_p(\Pi)^*$ in the sense of \cite[\S 10.4]{LSZ17}.

%%%%%%%%%%%%%%%%%%

 \subsection{Automorphic representations for $\GL_2$}
  \label{sect:autoGL2}

  \begin{definition}
   For $i= 1, 2$ in case (C), and for $i = 2$ in case (B), let $\Sigma_i$ be an automorphic representation of $\GL_2(\AA_{\QQ})$ generated by a holomorphic cuspidal newform $g_i$ of weight $t_i+2\geq 2$. Let $\Sigma_i' = \Sigma_i \otimes \|\cdot\|^{-t_i/2}$, so that $\Sif'$ is defined over a number field.

   We let $\chi_{\Sigma_i}$ denote the nebentype character of $g_i$, so that $\Sigma_i$ has central character $\widehat{\chi}_{\Sigma_i}$.
  \end{definition}

  Replacing $E$ by a finite extension if necessary, we may assume that it is a common field of definition for both $\Pif'$ and $\Sif'$.

  \begin{definition}
   Let $p$ be a prime such that $\Sif$ is unramified at $p$. We write $\fa_i,\, \fb_i$ for the Hecke parameters of $\Sigma_{i,p}'$, and $Q_{p, i}(X)$ for the polynomial $(1 - \fa_i X)(1 - \fb_i X)$.
  \end{definition}

  The Hecke parameters are algebraic integers in $\bar{E}$, well-defined up to ordering. Extending $E$ if necessary, we may assume that they  lie in $E$ itself. They both have complex absolute value $p^{(t_i+1)/2}$,and they satisfy $\fa_i + \fb_i = a_p(g_i)$ and $\fa_i\fb_i = p^{(t_i+1)} \chi_{\Sigma_i}(p)$.

  \begin{note}
   The  local $L$-factor is given by
   \[ L(\Sigma_{i,p}, s - \tfrac{t_i+1}{2}) = Q_{p,i}\left(p^{s}\right)^{-1} = \left[\left(1 - \tfrac{\fa_i}{p^{s}}\right)\left(1 - \tfrac{\fb_i}{p^{s}}\right)\right]^{-1}.\qedhere\]
  \end{note}

%  \begin{definition}
%   We say $\Sigma_i$ is \emph{ordinary} at $p$ if one of $\fa_i$ and $\fb_i$ is a $p$-adic unit (and without loss of generality we take this to be $\fa_i$).
%  \end{definition}

  By a similar construction to the above, we can attach to $\Sigma_i$ a $2$-dimensional $L$-linear Galois representation $V_p(\Sigma_i)$, and we can interpret the $L$-linear Whittaker model $\sW(\Sif')_L$ as a space of homomorphisms of Galois representations $V_p(\Sigma_i) \to H^1_{\et,c}(Y_{\GL_2,\QQbar}, \Sym^{t_i}\sH_L)$, or dually $H^1_{\et}(Y_{\GL_2,\QQbar}, \Sym^{t_i}\sH_L^\vee(1)) \to V_p(\Sigma_i)^*$.

  \begin{remark}
   Note that $\sW(\Sif')_L$ has a canonical nonzero vector (the normalised Whittaker newform), which corresponds to the realisation of $V_p(\Sigma_i)$ as the $\Sif'$-isotypical part of cohomology at level $K_1(N_i) = \{g: g = \stbt{\star}{\star}{0}{1} \bmod N_i\}$ where $N_i$ is the conductor of $\Sigma_i$. Similar remarks apply to $V_p(\Pi)$, using the newvector theory of \cite{robertsschmidt07} and \cite{okazaki}.
  \end{remark}

  \begin{proposition}
   There is a canonical vector $\nu_{i,\dR} \in \Fil^1 \DdR( V_p(\Sigma_i))$ characterised as follows: we have an isomorphism
   \[ \Fil^1 \DdR( V_p(\Sigma_i)) = \Hom_{\GL_2(\Af)}\left(\sW(\Sigma'_{i,\f})_E, H^0(X_{\GL_2, E},\omega^{(t_i +2)}) \right) \otimes_E L, \]
   and $\nu_{i,\dR}$ maps the normalised new-vector $w_i^{\mathrm{new}} \in \sW(\Sigma'_{i,\f})_E$ to $G(\chi_{\Sigma_i}^{-1}) \cdot g_i$, where $g_i$ is the normalised newform generating $\Sigma_i'$.
  \end{proposition}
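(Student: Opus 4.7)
The plan is to prove this by combining the $p$-adic comparison isomorphism between étale and de Rham cohomology with the BGG/Hodge filtration for modular curves, then tracking the normalisations carefully to pin down the Gauss sum factor.

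First, I would unwind the definition: $V_p(\Sigma_i) = \Hom_{L[\GL_2(\Af)]}\bigl(\sW(\Sigma'_{i,\f})_L, H^1_{\et,c}(Y_{\GL_2,\QQbar}, \Sym^{t_i}\sH_L)\bigr)$ is a direct summand, cut out by a Hecke projector defined over $E$, of the étale cohomology of $Y_{\GL_2}$ with coefficients in the lisse sheaf $\Sym^{t_i}\sH_L$. Faltings' $p$-adic comparison theorem (in its version for open Shimura varieties with nontrivial automorphic coefficient sheaves; see e.g.\ the formalism of \cite{LZ20}) then gives a filtered isomorphism
\[
 \DdR\bigl(V_p(\Sigma_i)\bigr) \;=\; \Hom_{\GL_2(\Af)}\Bigl(\sW(\Sigma'_{i,\f})_E,\ H^1_{\dR,c}\bigl(Y_{\GL_2,E}, \Sym^{t_i}\sH^\vee_E\bigr)\Bigr) \otimes_E L,
\]
where I have used the Weil pairing $\Sym^{t_i}\sH \cong \Sym^{t_i}\sH^\vee(t_i)$ (and absorbed the Tate twist into the arithmetic normalisation $\Sigma_i \rightsquigarrow \Sigma_i'$) so that the coefficient sheaf on the right is the one admitting a natural Hodge filtration.

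Second, I would apply the BGG/Eichler--Shimura identification of the top step of this Hodge filtration. For the universal elliptic curve the sheaf $\Sym^{t_i}\sH^\vee$ has a Hodge filtration whose top graded is $\omega^{t_i}$, and the BGG complex on $Y_{\GL_2}$ degenerates to identify
\[
 \Fil^1 H^1_{\dR,c}\bigl(Y_{\GL_2,E}, \Sym^{t_i}\sH^\vee_E\bigr) \;=\; H^0\bigl(X_{\GL_2,E}, \omega^{(t_i+2)}\bigr),
\]
the space of weight $t_i+2$ cusp forms with Fourier coefficients in $E$; here the Hodge filtration jumps only at $0$ and $t_i+1$, so any $\Fil^j$ with $1\leq j\leq t_i+1$ gives the same ``holomorphic'' subspace, and the isomorphism is compatible with the $\GL_2(\Af)$-action by Faltings' comparison of Hecke correspondences with their algebraic incarnations. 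Combining with Step 1 yields the stated isomorphism $\Fil^1 \DdR(V_p(\Sigma_i)) \cong \Hom_{\GL_2(\Af)}\bigl(\sW(\Sigma'_{i,\f})_E, H^0(X_{\GL_2,E}, \omega^{(t_i+2)})\bigr) \otimes_E L$.

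Third, the existence and uniqueness of $\nu_{i,\dR}$ rests on strong multiplicity one for $\GL_2$: the $\Sif'$-isotypic subspace of $H^0(X_{\GL_2,E}, \omega^{(t_i+2)})$ is abstractly isomorphic (as a $\GL_2(\Af)$-module) to $\sW(\Sif')_E$ itself, so $\Hom_{\GL_2(\Af)}(\sW(\Sif')_E, H^0(X_{\GL_2,E}, \omega^{(t_i+2)}))$ is one-dimensional over $E$ and it suffices to prescribe the image of any one nonzero vector. Prescribing $w_i^{\mathrm{new}} \mapsto G(\chi_{\Sigma_i}^{-1})\cdot g_i$ thus defines $\nu_{i,\dR}$ unambiguously.

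The main obstacle, and the step deserving most care, is justifying the Gauss-sum normalisation. I would derive it by comparing two rational structures on the local new-line at each bad prime $\ell \mid N_i$: the Whittaker model (with its pairing of the spherical Whittaker function with itself giving $1$) and the $q$-expansion model of $g_i$ (normalised so that $a_1(g_i) = 1$ and the coefficients lie in $E$). For primes $\ell \nmid N_i$ the two normalisations agree, but at ramified primes the new-vector of $\mathscr{W}(\Sigma'_{i,\f})_E$ differs from the $q$-expansion vector by the local root number / Atkin--Lehner constant, and the product of these local discrepancies across $\ell \mid N_i$ collapses to the global Gauss sum $G(\chi_{\Sigma_i}^{-1})$. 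This is the standard computation underlying the arithmetic normalisation of $L$-values for modular forms of nontrivial nebentype, and it can be imported wholesale from the analogous identity used for $\GL_2$ Eisenstein/cuspidal regulators in \cite{KLZ20}, so I would cite that rather than redo the local calculations here.
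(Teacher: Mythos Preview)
Your overall architecture (comparison isomorphism, then BGG/Eichler--Shimura, then multiplicity one) is sound and is exactly what underlies the paper's statement; indeed the paper does not give a formal proof at all, only an explanatory sentence after the proposition citing \cite[\S 6.1]{KLZ20}.

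However, your Step 4 misidentifies the source of the Gauss sum. The factor $G(\chi_{\Sigma_i}^{-1})$ does \emph{not} arise from a product of local Atkin--Lehner pseudo-eigenvalues or root numbers at ramified primes. Rather, it comes from the choice of arithmetic model for the modular curve: in the paper's conventions $Y_1(N) = Y_{\GL_2}(K_1(N))$ with $K_1(N) = \{g \equiv \stbt{\star}{\star}{0}{1} \bmod N\}$, and in this Shimura-variety model the cusp $\infty$ is \emph{not} $\QQ$-rational (it is the cusp $0$ that is rational). Consequently a section of $\omega^{t_i+2}$ defined over $E$ need not have $q$-expansion at $\infty$ with coefficients in $E$; the $q$-expansion lives in $E \otimes \QQ(\mu_N)$ and the Galois action introduces the Gauss sum. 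This is precisely the content of \cite[\S 6.1]{KLZ20}, and the paper says so explicitly in the sentence following the proposition. Your proposed mechanism via Atkin--Lehner constants would instead produce the global root number of $\Sigma_i$, which is a different quantity. You should replace that paragraph with the cusp-rationality explanation and keep the citation to \cite{KLZ20}.
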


  In other words, the composite of $\nu_{i,\dR}$ with the $q$-expansion map, sending a modular form to its associated Whittaker function, is multiplication by $G(\chi_{\Sigma_i}^{-1})$. (The presence of the Gauss sum is a consequence of the fact that the cusp $\infty$ is not rational in our model of $Y_1(N) = Y_{\GL_2}(K_1(N))$; see \cite[\S 6.1]{KLZ20}.)

%%%%%%%%%%%%%%%%%%%%%%%%%%%

 \subsection{Eisenstein representations}

  We make the following definitions in cases (A) and (B), which are slightly artificial but allow us to treat them on the same footing as (C).

  \begin{notation}
   For $i = 1$ in case (B), and $i \in \{1, 2\}$ in case (A), let $\chi_{\Sigma_i}$ be a Dirichlet character, and $t_i \ge 0$ an integer, with $(-1)^{t_i} = \chi_{\Sigma_i}(-1)$.
  \end{notation}

  We thus have integers $(r_1, r_2, t_1, t_2)$, and Dirichlet characters $(\chi_{\Pi}, \chi_{\Sigma_1}, \chi_{\Sigma_2})$, in all three cases.

  \begin{assumption}
   We suppose that $\chi_{\Pi} \cdot \chi_{\Sigma_1} \cdot \chi_{\Sigma_2} = 1$ (implying that $r_1 + r_2 = t_1 + t_2 \bmod 2$), and that $(r_1, r_2, t_1, t_2)$ satisfy the inequalities of \cref{prop:branchingG}.
  \end{assumption}

  \begin{definition}\label{def:aibiEis}
   If $\Sigma_i$ is Eisenstein, we set
   \[ V_p(\Sigma_i) \coloneqq L(-1-t_i), \qquad \fa_i = p^{t_i + 1}, \qquad \fb_i = \chi_{\Sigma_i}(p), \]
   and we let $\nu^{(i)}_{\dR}$ denote the canonical basis vector of $\Fil^{(1 + t_i)} \DdR(L(-1-t_i)) = L$.
  \end{definition}

  \begin{remark}
   If $t_i > 0$, then there is a non-cuspidal, discrete series automorphic representation $\Sigma_i'$ of $\GL_2(\AA)$, generated by holomorphic Eisenstein series of weight $t_i + 2$, whose central character is $\hat\chi_{\Sigma_i} \|\cdot\|^{-t_i}$ and whose $T_\ell$-eigenvalue at a prime $\ell \nmid \operatorname{cond}(\chi_{\Sigma_i})$ is given by $\chi_{\Sigma_i}(\ell) + \ell^{(t_i + 1)}$. We can think of these Eisenstein representations as playing the role of the missing cuspidal automorphic representations in cases (A) and (B). If $t_i = 0$ the situation is more complicated, since the corresponding Eisenstein series are only nearly-holomorphic rather than holomorphic, and the representation they generate is not irreducible; but we are only using $\Sigma_i$ as a notational device anyway, so this will not concern us greatly.
  \end{remark}

 \subsection{Tensor products}

  \begin{definition}
   Let $h = 2 + q + t_1 + t_2 = \tfrac{r_1 + r_2 + t_1 + t_2 + 4}{2}$, and define a Galois representation $\VV$ by
   \[ \VV = V_p(\Pi) \otimes V_p(\Sigma_1)\otimes V_p(\Sigma_2), \]
   with the above interpretation of $V_p(\Sigma_i)$ if $\Sigma_i$ is Eisenstein.
  \end{definition}

   Using the above constructions, together with the K\"unneth formula, we can view $\sW(\Pif')_L$ in case (A), $\sW(\Pif')_L \otimes \sW(\Sigma_{2, \f}')$ in case (B), and $\sW(\Pif')_L \otimes \sW(\Sigma_{1, \f}')\otimes \sW(\Sigma_{2, \f}')$  in case (C), as homomorphisms of Galois representations
   \begin{align*}
    H^3_{\et}\left(Y_{G,\QQbar}, \cV^\vee(3-q)\right) &\rTo \VV^*(-h),\\
    H^4_{\et}\left(Y_{\Gh,\QQbar}, \hat{\cV}^\vee(3-q-t_2)\right) &\rTo \VV^*(-h),\\
    H^5_{\et}\left(Y_{\Gt,\QQbar}, \tilde{\cV}^\vee(3-q-t_1-t_2)\right) &\rTo \VV^*(-h).
   \end{align*}
   respectively.

%%%%%%%%%%%%%%%%%%%%%%

 \subsection{Galois-cohomological periods}

  \begin{definition}
   Let $z$ denote the unique map
   \begin{align*}
    \mathscr{W}(\Pif')_L \times \cS_{(0,0)}(\Af^2\times\Af^2) &\to H^1(\QQ, \VV^*(-h))[2-h] & \text{(case A)},\\
    \mathscr{W}(\hat{\Pi}_{\f}')_L \times \cS_{(0)}(\Af^2) &\to H^1(\QQ, \VV^*(-h))[2-h] & \text{(case B)},\\
    \mathscr{W}(\tilde{\Pi}_{\f}')_L  &\to H^1(\QQ, \VV^*(-h))[2-h] & \text{(case C)},
   \end{align*}
   such that, respectively,
   \begin{align*}
    \big\langle\!\!\big\langle \LE_{\et}(\Phi \otimes \xi), w \big\rangle\!\!\big\rangle_{G}& = z(\xi \cdot w, \Phi),\\
    \big\langle\!\!\big\langle \widehat{\LE}_{\et}(\Phi \otimes \xi),  \hat{w} \big\rangle\!\!\big\rangle_{\Gh}& = z(\xi \cdot \hat{w}, \Phi),\\
    \big\langle\!\!\big\langle \widetilde{\LE}_{\et}(\xi),  \tilde{w} \big\rangle\!\!\big\rangle_{\Gt} &= z(\xi \cdot \tilde{w}).
   \end{align*}
  \end{definition}

  \begin{notation}
   Let $\mathscr{S}$ and $\mathscr{T}$ (for ``test data'') denote the following spaces:
   \[
    \mathscr{S} =
    \begin{cases}
     \cS_{(0)}(\Af^2, \hat\chi_{\Sigma_1})_L[-t_1] \boxtimes \cS_{(0)}(\Af^2, \hat\chi_{\Sigma_2})_L[-t_2] & \text{(case A)} \\
     \cS_{(0)}(\Af^2, \hat\chi_{\Sigma_1})_L[-t_1] \boxtimes \sW(\Sigma_{2, \f}')_L & \text{(case B)} \\
     \phantom{\cS_{(0)}(\Af^2, \hat\chi_{\Sigma_1})_L[-t_1]}\mathllap{\sW(\Sigma_{1, \f}')_L} \boxtimes \sW(\Sigma_{2, \f}')_L & \text{(case C)} \\
    \end{cases},
   \]
   and
   \[ \mathscr{T} = \sW(\Pif') \boxtimes \mathscr{S}, \]
   considered as a $\Gt(\Af)$-module in the evident fashion. (Here, $\cS_{(0)}(\Af^2, \hat\chi)_L$ denotes the $\hat\chi$-eigenspace for the action of $\widehat{\ZZ}^\times$.)

  \end{notation}

  One checks that $z(\dots)$ is an $H(\Af)$-equivariant map $\mathscr{T} \to H^1(\QQ, \VV^*(-h))[2-h]$, where $H(\Af)$ acts on $\mathscr{T}$ via $\tilde{\iota}$, and trivially on $H^1(\QQ, \VV^*(-h))$.

  \begin{remark}[Multiplicity one]
   In case (C), the main result of \cite{waldspurger12b} shows that we have
   \[ \dim \Hom_{H(\Af)}\left(\mathscr{T}, L[2-h]\right) \le 1,\]
   with equality iff the local root numbers at all finite places are $+1$. So the image of $z(\dots)$ is contained in a subspace of $H^1(\QQ, \VV^*(-h))$ of dimension $\le 1$.

   The analogous statements in cases (A) and (B) are more delicate, since one or both of the $\Sigma_i$ is replaced with a principal-series representation (a quotient of a space of Schwartz functions) which is no longer irreducible if $t_i = 0$. In this reducible case the multiplicity-one result is no longer automatic, due to ``exceptional pole'' phenomena for local $L$-factors. This is explored in more detail in \cite{loeffler-zeta1}. In any case, these multiplicity-one results do not seem to be needed for the proof of our main theorem, so we shall not pursue the matter further in the present paper.
  \end{remark}

  Unravelling the notation, we can write the map $z(\dots)$ explicitly in terms of the pushforward maps of \cref{lem:goodsubrep}. We omit case A here since this is given in \cite[\S 6.5]{LZ20}.

  \begin{proposition}\
   \label{prop:explicitZ}
   \begin{itemize}
    \item In case (B), suppose that $\hat{w} \in \mathscr{W}(\hat{\Pi}_{\f}')$. Then for any open compact $U \subset \Gh(\Af)$ such that $U$ fixes $\hat{w}$ and $V = U \cap H(\Af)$ fixes $\Phi$, we have
    \[ z(\hat{w}, \Phi) = \operatorname{vol}(V) \cdot \Big\langle \hat\iota_{U, \star}\left(\Eis^{t_1}_{\et, \Phi}\boxtimes\mathbf{1}\right),\, \hat{w}\Big\rangle_{\hat{Y}(U)}. \]
    Here $\hat\iota_{U, *}$ denotes pushforward along $Y_H(V) \to Y_{\Gh}(U)$ and $\Eis^{t_1}_{\et, \Phi}$ denotes the \'etale realisation of the motivic Eisenstein class $ \Eis^{t_1}_{\mot, \Phi}$.
    \item In case (C), suppose that $\tilde{w} \in \mathscr{W}(\tilde{\Pi}_{\f}')$. Then for any open compact $U \subset \Gt(\Af)$ such that $U$ fixes $\hat{w}$, letting $V = U \cap H(\Af)$, we have
     \[ z(\tilde{w}) = \operatorname{vol}(V) \cdot \Big\langle \tilde\iota_{U, \star}\left(\mathbf{1}\right),\, \tilde{w}\Big\rangle_{\tilde{Y}(U)}. \]
     Here $\tilde\iota_{U, *}$ denotes pushforward along $Y_H(V) \to Y_{\Gt}(U)$.
   \end{itemize}
  \end{proposition}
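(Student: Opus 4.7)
The strategy is to unwind the two ingredients defining $z$: the adjunction
\[ \big\langle\!\!\big\langle \widehat{\LE}_{\et}(\Phi \otimes \xi), \hat{w} \big\rangle\!\!\big\rangle_{\Gh} = z(\xi \cdot \hat{w}, \Phi) \]
(and its counterpart for $\widetilde{\LE}_{\et}$), together with the explicit construction of $\widehat{\LE}_{\mot}$ (resp.\ $\widetilde{\LE}_{\mot}$) as a level-compatible limit of the pushforward maps of \cref{lem:goodsubrep} composed with the Eisenstein symbol. The analogous statement for case (A) is treated in \cite[\S 6.5]{LZ20} by exactly this method, and the argument for (B) and (C) is strictly parallel; the plan is therefore to adapt that proof, replacing one (resp.\ both) of the Eisenstein symbols by the Whittaker data for $\Sigma_2$ (resp.\ $\Sigma_1$ and $\Sigma_2$).

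For case (B), fix an open compact $U \subset \Gh(\Af)$ fixing $\hat{w}$, and let $V = U \cap H(\Af)$; by hypothesis $V$ fixes $\Phi$. Take $\xi = e_U \coloneqq \vol(U)^{-1}\mathbf{1}_U$, the Hecke idempotent projecting onto $U$-invariants; then $\xi \cdot \hat{w} = \hat{w}$, and the defining adjunction gives
\[ z(\hat{w}, \Phi) = \big\langle\!\!\big\langle \widehat{\LE}_{\et}(\Phi \otimes e_U), \hat{w} \big\rangle\!\!\big\rangle_{\Gh}. \]
By the construction of $\widehat{\LE}_{\mot}$, the class $\widehat{\LE}_{\mot}(\Phi \otimes e_U)$ is represented at level $U$ by $\hat{\iota}_{U, \star}\!\left(\Eis^{t_1}_{\mot, \Phi} \boxtimes \mathbf{1}\right)$, since $\Eis^{t_1}_{\mot, \Phi}$ already descends to level $V$. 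Passing to the \'etale realisation and rewriting the infinite-level pairing in terms of the level-$U$ pairing $\langle -, -\rangle_{\hat{Y}(U)}$, the normalisations coming from $e_U$, from the level-compatible limit defining $\widehat{\LE}_{\mot}$, and from the rescaling of the pairing combine to leave the single factor $\vol(V)$ claimed in the proposition. Case (C) proceeds identically, with the Eisenstein symbol replaced throughout by the tautological class $\mathbf{1} \in H^0_{\mot}(Y_H(V), \QQ(0))$ and $\hat{\iota}_{U, \star}$ replaced by $\tilde{\iota}_{U, \star}$.

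The main obstacle is the volume bookkeeping: the three separate normalisations (Hecke idempotent, level-compatible limit, rescaling of the pairing) must be shown to combine to exactly one copy of $\vol(V)$, with no spurious factors of $[U:V]$ or $\vol(U)$. Once this has been verified, one also needs to check that the resulting formula is independent of the choice of $U$, which follows formally from the $H(\Af)$-equivariance properties of $\widehat{\LE}_{\mot}$ and $\widetilde{\LE}_{\mot}$ already invoked in the definition of $z$. Modulo this bookkeeping, the proposition is a direct consequence of the definitions, and the only substantive difference between cases (B) and (C) is which pushforward map of \cref{lem:goodsubrep} is being applied.
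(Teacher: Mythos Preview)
Your proposal is correct and follows the same approach the paper indicates: the paper does not give a detailed proof but simply says the formula comes from ``unravelling the notation'' of the definition of $z$, with case (A) deferred to \cite[\S 6.5]{LZ20} and cases (B), (C) being parallel. Your choice of $\xi = e_U$ and your tracking of the volume normalisations is exactly the kind of unwinding the paper has in mind.
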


%%%%%%%%%%%%%%%%%%%%%%%%%%%%%%%%%%%%%%%%%%%

 \subsection{Exponential maps and regulators}

  \begin{proposition}
   In cases (A) and (C) we have
   \[
    H^1_{\mathrm{e}}(\Qp, \VV^*(-h)) = H^1_{\mathrm{f}}(\Qp, \VV^*(-h))= H^1_{\mathrm{g}}(\Qp, \VV^*(-h))
   \]
   where $H^1_{\mathrm{e}}$, $H^1_{\mathrm{f}}$ and $H^1_{\mathrm{g}}$ are the Bloch--Kato subspaces. (Cf.~\cite[Lemma 6.7.2]{LZ20}).

   In case (B), we always have $H^1_{\mathrm{e}}(\Qp, \VV^*(-h)) = H^1_{\mathrm{f}}(\Qp, \VV^*(-h))$. We also have $H^1_{\mathrm{f}}(\Qp, \VV^*(-h)) = H^1_{\mathrm{g}}(\Qp, \VV^*(-h))$ \textbf{unless} $(t_1, t_2) = (0, r_1 - r_2)$ and one of the pairwise products $\{ \alpha \fa_2, \dots, \delta \fb_2\}$ is equal to $p^{r_1 + 2}$.
  \end{proposition}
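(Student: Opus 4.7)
The plan is to apply the standard Bloch--Kato criteria: for a de Rham $G_{\Qp}$-representation $V$, one has $H^1_{\mathrm{e}}(V) = H^1_{\mathrm{f}}(V)$ iff $\Dcris(V)^{\varphi=1} = 0$, and by local Tate duality (which pairs $H^1_{\mathrm{e}}$ on $V^*(1)$ with $H^1_{\mathrm{g}}$ on $V$, and similarly for $H^1_{\mathrm{f}}$) one has $H^1_{\mathrm{f}}(V) = H^1_{\mathrm{g}}(V)$ iff $\Dcris(V^*(1))^{\varphi=1} = 0$. Applied to $V = \VV^*(-h)$, whose Kummer dual is $\VV(h+1)$, both equalities reduce to checking that $1$ does not occur as a crystalline Frobenius eigenvalue on $\Dcris(\VV^*(-h))$ or on $\Dcris(\VV(h+1))$.

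First I would write the Frobenius eigenvalues on $\Dcris(\VV)$ as products of a Hecke parameter $\lambda \in \{\alpha, \beta, \gamma, \delta\}$ of $\Pi$ with one parameter from each $V_p(\Sigma_i)$ (namely $\fa_i$ or $\fb_i$ when $\Sigma_i$ is cuspidal, and simply $p^{1+t_i}$ in the Eisenstein case of \cref{def:aibiEis}); the Tate twists then rescale these by $p^h$ or $p^{-(h+1)}$ respectively. The main input is purity: the $\Pi$-parameters have complex absolute value $p^{(r_1+r_2+3)/2}$ and the cuspidal $\Sigma_i$-parameters have $p^{(t_i+1)/2}$, so the complex absolute values of all relevant Frobenius eigenvalues can be read off from a short computation in each case.

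A direct calculation then gives: in case (C), every eigenvalue on $\Dcris(\VV^*(-h))$ and $\Dcris(\VV(h+1))$ has complex absolute value $p^{-1/2}$ (they are Weil of weight $-1$), so $1$ is impossible; in case (A) the exponents in the absolute value are $q-(r_1+r_2+3)/2$ and $(r_1+r_2+1)/2 - q$, both nonzero by $q \le r_2 < (r_1+r_2+1)/2$ (an easy consequence of \cref{prop:branchingG} together with $r_1 \ge r_2$); and in case (B) the eigenvalues on $\Dcris(\VV^*(-h))$ all have common absolute value $p^{-1-t_1/2} < 1$, which handles $H^1_{\mathrm{e}} = H^1_{\mathrm{f}}$ unconditionally.

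The delicate part, and the only place an exceptional case can arise, is $H^1_{\mathrm{f}} = H^1_{\mathrm{g}}$ in case (B). Here the eigenvalues of $\Dcris(\VV(h+1))$ are of the form $\lambda\mu/p^{2+q+t_2}$ with $\lambda \in \{\alpha,\beta,\gamma,\delta\}$ and $\mu \in \{\fa_2, \fb_2\}$, and their common complex absolute value computes to $p^{t_1/2}$. This matches $|1|_\CC$ precisely when $t_1 = 0$, and the branching inequalities $|t_1-t_2|\le r_1-r_2 \le t_1+t_2$ of \cref{prop:branchingG} then force $t_2 = r_1 - r_2$. Within that sub-case absolute values alone no longer rule out the eigenvalue $1$; the hard step is to check the actual $p$-adic equality $\lambda\mu = p^{2+q+t_2}$, which simplifies (using $q = r_2$ at these parameters) to $\lambda\mu = p^{r_1+2}$, reproducing exactly the exceptional condition in the statement.
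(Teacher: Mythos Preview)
Your proof is correct and takes essentially the same approach as the paper. The only cosmetic difference is that you phrase the criterion as ``$1$ is not a $\varphi$-eigenvalue on $\Dcris(\VV^*(-h))$ and $1$ is not a $\varphi$-eigenvalue on $\Dcris(\VV(h+1))$'' (via local Tate duality), whereas the paper phrases it as ``neither $1$ nor $p^{-1}$ is a $\varphi$-eigenvalue on $\Dcris(\VV^*(-h))$''; these are equivalent, and the ensuing weight computation and identification of the exceptional case-(B) locus are the same.
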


  \begin{proof}
   It is well known that if $V$ is a crystalline representation of $\Gal(\QQbar_p / \Qp)$ and neither $1$ nor $p^{-1}$ is an eigenvalue of $\varphi$ on $\Dcris(V)$, then $H^1_{\mathrm{e}}(V) = H^1_{\f}(V) = H^1_{\mathrm{g}}(V)$.

   Since $\Pi$ and the $\Sigma_i$ are tempered, we can compute the absolute values of the eigenvalues of $\varphi$ in terms of the $r_i$ and $t_i$. We find that the common absolute value is $p^{-(t_1 + t_2 + 3)/2}$ in case (A), $p^{-(t_1 + 2)/2}$ in case (B), and $p^{-1/2}$ in case (C). So $1$ is never an eigenvalue; and $p^{-1}$ can only be an eigenvalue if we are in case (B) and $t_1 = 0$, which implies $t_2 = r_1 - r_2$ and hence $h = r_1 + 1$, and one of the numbers $\{ \tfrac{p^h}{\alpha\fa_2}, \dots, \tfrac{p^h}{\delta\fb_2}\}$ is equal to $p^{-1}$.
  \end{proof}

  \begin{remark}
   The exceptional case corresponds to a situation where the $L$-value corresponding to our motivic class is a near-central value, and there is an ``exceptional zero'' coming from the Euler factor at $p$. Note that the relation $t_2 = r_1 - r_2$ implies that the $L$-function of $\Pi \times \Sigma_2$ has no critical values.
  \end{remark}

  \begin{lemma}
   Suppose $\Pi$ is Klingen-ordinary at $p$ and we are in Case B, with $(t_1, t_2) = (0, r_1 - r_2)$. If one of the pairwise products $\{ \alpha \fa_2, \dots, \delta \fb_2\}$ is equal to $p^{r_1 + 2}$, then $\Pi$ must in fact be Borel-ordinary at $p$, $\Sigma_2$ is ordinary at $p$, and the ``bad'' eigenvalue is either $\beta \fb_2$ or $\gamma \fa_2$ (or possibly both) where $\alpha$ and $\fa_2$ are the unit eigenvalues.
  \end{lemma}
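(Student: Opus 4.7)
The plan is a direct computation with $p$-adic valuations of the Hecke parameters. Let $a, b, c, d$ denote $v_p(\alpha), v_p(\beta), v_p(\gamma), v_p(\delta)$ and $x, y$ denote $v_p(\fa_2), v_p(\fb_2)$, ordered so that $a \le b \le c \le d$ and $x \le y$. Temperedness together with the determinant relations $\alpha\delta = \beta\gamma = p^{r_1+r_2+3}\chi_\Pi(p)$ and $\fa_2\fb_2 = p^{t_2+1}\chi_{\Sigma_2}(p)$ give
\[ a + d \;=\; b + c \;=\; r_1+r_2+3, \qquad x + y \;=\; r_1-r_2+1 = t_2+1, \]
while Klingen-ordinarity forces $a + b = r_2+1$. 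The bounds $a \le b$ and $x \le y$ then give $0 \le a \le (r_2+1)/2$ and $0 \le x \le (r_1-r_2+1)/2$.

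Next, I would observe that the eight pairwise products $\{\alpha,\beta,\gamma,\delta\}\cdot\{\fa_2,\fb_2\}$ fall into four pairs whose valuations sum to $(a+d)+(x+y) = 2(r_1+2)$, namely $(\alpha\fa_2, \delta\fb_2)$, $(\alpha\fb_2, \delta\fa_2)$, $(\beta\fa_2, \gamma\fb_2)$, and $(\beta\fb_2, \gamma\fa_2)$. Hence if any product has valuation $r_1+2$, then so does its partner. It therefore suffices to check whether each pair can simultaneously attain valuation $r_1+2$. For the first pair, $v_p(\alpha\fa_2) = a+x \le (r_2+1)/2 + (r_1-r_2+1)/2 = (r_1+2)/2 < r_1+2$, ruling it out. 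For the second, $v_p(\alpha\fb_2) = r_1+2$ rearranges to $a - x = r_2+1$, which is impossible since $a \le (r_2+1)/2$ and $x \ge 0$. For the third, $v_p(\beta\fa_2) = r_2+1-a+x = r_1+2$ rearranges to $x - a = r_1-r_2+1$, impossible since $x \le (r_1-r_2+1)/2$ and $a \ge 0$.

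Only the fourth pair survives: $v_p(\beta\fb_2) = (r_2+1-a)+(r_1-r_2+1-x) = (r_1+2) - (a+x)$, which equals $r_1+2$ iff $a = x = 0$, and then automatically $v_p(\gamma\fa_2) = (r_1+2+a)+x = r_1+2$ as well. The condition $a=0$ combined with Klingen-ordinarity is precisely Siegel-ordinarity, so $\Pi$ is Borel-ordinary; $x=0$ says $\Sigma_2$ is ordinary at $p$; and both $\beta\fb_2$ and $\gamma\fa_2$ have valuation $r_1+2$, so either (or both) can be the ``bad'' unit multiple of $p^{r_1+2}$. This is pure bookkeeping on valuations, and I do not foresee any serious obstacle; the only mild subtlety is remembering that the ordering of the $\fa_i$ and of $\alpha,\ldots,\delta$ can be chosen freely using the Weyl group action, so the conclusion ``$\alpha$ and $\fa_2$ are the unit eigenvalues'' is a labelling statement.
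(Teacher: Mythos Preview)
Your proof is correct and is precisely the ``elementary manipulations from the inequalities for Hecke parameters'' that the paper invokes without spelling out; the paper gives no further detail beyond that one-line justification. Your valuation bookkeeping, the pairing of the eight products via $a+d=b+c$ and $x+y$, and the conclusion that only the pair $(\beta\fb_2,\gamma\fa_2)$ can attain valuation $r_1+2$ (forcing $a=x=0$) fill in exactly what the paper leaves to the reader.
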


  \begin{proof}
   This follows by elementary manipulations from the inequalities for Hecke parameters of $\Pi$ and $\Sigma_2$.
  \end{proof}

  \begin{assumption}
   We assume henceforth that $\Pi$ is Klingen-ordinary at $p$, and that we are not in the exceptional case-B situation described above.
  \end{assumption}

  Since the localisations at $p$ of the classes $z(\dots)$ lie in $H^1_{\mathrm{g}}$ (by \cite[Theorem B]{nekovarniziol}), they are also in $H^1_{\mathrm{e}}$. Letting ``$\log$'' denote the inverse of the Bloch--Kato exponential, for any $\tau \in \mathscr{T}$ we may define
  \begin{align*}
   \log z(\tau)\in \frac{\DdR(\VV^*)}{\Fil^{-h} \DdR(\VV^*)}&=\left(\Fil^{(1+h)}\DdR(\VV)\right)^*.
  \end{align*}

  \begin{definition}
   Let $\nu$ be a basis of the 1-dimensional $L$-vector space $\frac{\Fil^1\DdR(V_p(\Pi))}{\Fil^{r_2+2}\DdR(V_p(\Pi))}$, and $\nu_{\dR}$ its unique lifting to $\Fil^1\DdR(V_p(\Pi)) \cap \Dcris(V_p(\Pi))^{(\varphi - \alpha)(\varphi-\beta) = 0}$, as in \cite[\S 6.7]{LZ20}.
  \end{definition}

  One checks that
  \[ \tilde\nu_{\dR} \coloneqq \nu_{\dR} \otimes\nu^{(1)}_{\dR}\otimes \nu^{(2)}_{\dR} \in \Fil^{(1+h)} \DdR(\VV), \]
  so this can be meaningfully paired with $z(\tau)$.

  \begin{definition}
   \label{def:regulators}
   For $\tau \in \mathscr{T}$, we set
   \[ \operatorname{Reg}_\nu(\tau) = \left\langle \log z(\tau), \mu_{\dR}\right\rangle_{\Dcris(\VV)} \in L.\]
  \end{definition}

  This defines an $H(\Af)$-equivariant map $\mathscr{T} \to L[2-h]$.

 \subsection{Multiplicity one at $p$}

  \begin{definition}
   Write $\mathscr{T} = \mathscr{T}_p \otimes \mathscr{T}^p$ as a tensor product of test data at $p$ and away from $p$; and let $\tau_{p,\sph} \in \mathscr{T}_p$ denote the ``normalised spherical datum'' at $p$, given as follows:
   \begin{itemize}
    \item in case (A), $\tau_{p,\sph} = w_{0, \sph} \otimes \Phi_{\sph} \otimes \Phi_{\sph}$, where $w_{0, \sph}$ is the spherical Whittaker function of $\Pi'_p$ with $w_{0, \sph}(1) = 1$, and $\Phi_{\sph} = \ch(\Zp^2)$;
    \item in case (B), $\tau_{p,\sph} = w_{0, \sph} \otimes \Phi_{\sph} \otimes w_{2, \sph}$, where $w_{2, \sph}$ is the spherical Whittaker function of $\Sigma_{2, p}'$;
    \item in case (C), $\tau_{p,\sph} = w_{0, \sph} \otimes  w_{2, \sph} \otimes w_{2, \sph}$.
   \end{itemize}
  \end{definition}

  \begin{proposition}
   We have $\dim \Hom_{H(\Qp)}(\mathscr{T}_p, L[2-h]) = 1$, and this space has a unique basis vector $\tilde{Z}_p$ such that $\tilde{Z}_p(\tau_{p, \sph}) = 1$.
  \end{proposition}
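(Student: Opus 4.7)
The plan is to combine a local uniqueness statement (of Gross--Prasad / Waldspurger type) with an explicit zeta integral computed via Casselman--Shalika, establishing the bound $\dim \leq 1$ and exhibiting a nonzero functional simultaneously; the normalisation then pins down $\tilde{Z}_p$.

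\textbf{Step 1: Upper bound.} First I would show $\dim \Hom_{H(\Qp)}(\mathscr{T}_p, L[2-h]) \leq 1$. In case (C), $\mathscr{T}_p$ is the tensor product of three Whittaker models of unramified generic representations, and $H \subset \Gt$ is the standard orthogonal-type embedding (using the exceptional isogeny $\GSp_4 \simeq \GSpin_5$ and $H \simeq \GSpin_4$); local Gross--Prasad / Waldspurger multiplicity one for such pairs gives the bound. In cases (A) and (B), the spaces $\cS_{(0)}(\Qp^2, \hat\chi_{\Sigma_i})$ are used in place of a Whittaker model; they admit a natural filtration with graded pieces given by the principal series $\Ind_B^{\GL_2}(|\cdot|^{(t_i+1)/2}\hat\chi_{\Sigma_i}, |\cdot|^{-(t_i+1)/2})$ and a one-dimensional quotient, and on each irreducible constituent the case (C) uniqueness result (or its Eisenstein-type analogue) applies.

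\textbf{Step 2: Explicit functional.} Next I would write down a nonzero element $Z_p$ of $\Hom_{H(\Qp)}(\mathscr{T}_p, L[2-h])$ by the local Novodvorsky / Piatetski-Shapiro zeta integral adapted to the embedding $\tilde\iota$. Schematically, for $\tau = w_0 \otimes \xi_1 \otimes \xi_2$ in case (C),
\[
Z_p(\tau) \;=\; \int_{Z_H(\Qp)\backslash H(\Qp)} w_0\bigl(\tilde\iota(h)\bigr)\, \xi_1(h_1)\,\xi_2(h_2)\,|\mu(h)|^{h-2}\,dh,
\]
and in cases (A), (B) the $\GL_2$-Whittaker functions $\xi_i$ are replaced by Mellin transforms of the Schwartz data $\Phi_i$ against the appropriate character, which is the standard unfolding of a $\GL_2$-Eisenstein series. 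Convergence follows from temperedness of $\Pi$ and of the $\Sigma_i$ (including the Eisenstein cases, since the twist $\|\mu\|^{2-h}$ is precisely what cancels the polynomial growth at infinity on $H$).

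\textbf{Step 3: Spherical evaluation and normalisation.} Finally I would evaluate $Z_p(\tau_{p,\sph})$ using the Casselman--Shalika formula (and its principal-series analogue in cases A/B) to express each spherical vector in terms of the Satake parameters $(\alpha,\beta,\gamma,\delta)$ and $(\fa_i,\fb_i)$. The unramified integral then collapses to a geometric series summed over the $H$-Weyl chamber, yielding an explicit rational function in these parameters --- essentially the local $L$-factor $L_p(\Pi \times \Sigma_1 \times \Sigma_2, h-2)$ divided by a product of standard zeta factors. The key point is that under the ordinarity and non-exceptional assumption preceding the proposition, the resulting rational expression is a \emph{nonzero} element $c \in L^\times$; dividing gives $\tilde{Z}_p = c^{-1}Z_p$ with $\tilde{Z}_p(\tau_{p,\sph}) = 1$, and uniqueness in Step 1 forces it to span.

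\textbf{Main obstacle.} The hard part is the uniqueness in cases (A) and (B): the Schwartz-function spaces are neither irreducible nor generic in the usual sense, and the subspace conditions ``$\Phi(0,0) = 0$'' in $\cS_0$ interact with the $H$-action in a way that can a priori produce an extra invariant functional at the degenerate pieces. This must be ruled out by showing that any $H(\Qp)$-invariant functional on a Jacquet-module quotient of the Schwartz space forces the vanishing condition inherited from $\mathscr{T}^p$, which is exactly where the ``non-exceptional case'' assumption imposed earlier is used. Granting that, the zeta-integral argument of Steps 2--3 proceeds routinely.
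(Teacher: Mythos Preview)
Your proposal is essentially the same approach as the paper's: the paper simply cites \cite[Proposition 6.9.1 and \S 19]{LZ20} for case (A) and asserts that the same local zeta-integral analysis carries over to cases (B) and (C), which is exactly your Steps 2--3, while your Step~1 and ``main obstacle'' match the paper's remark that the reducible Schwartz-space situation in cases (A)/(B) is handled in \cite{loeffler-zeta1}. One small correction: Klingen ordinarity plays no role here --- the only hypothesis needed is the non-exceptional-case-B assumption, which the paper identifies with $L(\Pi_p \times \Sigma_{2,p}, s)$ having no pole at $s = -t_1/2$.
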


  \begin{proof}
   In case (A), this is \cite[Proposition 6.9.1]{LZ20}, which follows from a delicate analysis of local integrals carried out in \S 19 of \emph{op.cit.}. The same arguments are valid in cases (B) and (C); in case (B), we need to assume that $L$-factor $L(\Pi_p \otimes \Sigma_{2, p}, s)$ does not have a pole at $s = -\tfrac{t_1}{2}$, but this turns out to be equivalent to the assumption we have already made, that we are not in the ``exceptional case B'' situation described above.
  \end{proof}

  Consequently, there are $H(\Af^p)$-equivariant maps
  \begin{align*}
   z^{p}:  \mathscr{T}^p &\to H^1(\QQ, \VV(-h))[2-h], & \operatorname{Reg}_\nu^p: \mathscr{T}^p &\to L[2-h],
  \end{align*}
  such that we have
  \begin{align*}
   z(\tau_p \tau^p) &= \tilde{Z}_p(\tau_p)z^{p}(\tau^p), & \operatorname{Reg}_\nu(\tau_p\tau^p) &= \tilde{Z}_p(\tau_p)\operatorname{Reg}^{p}_\nu(\tau^p).
  \end{align*}
  Our goal is to give a formula for $\operatorname{Reg}_\nu^p(\tau^p)$ in terms of a cup-product in coherent cohomology, which we can then identify with a value of a suitable $p$-adic $L$-function.

 \subsection{Klingen-type data at $p$}

  We now specify the local data we want to use at $p$. Recall that we have assumed $\Pi$ to be Klingen-ordinary at $p$. There is thus a unique ordinary eigenspace at Klingen parahoric level $\Kl(p)$ for the operator $U_{2, \Kl}'$.

  \begin{definition} \
   \begin{itemize}
    \item Let $w'_{\Kl} \in \sW(\Pi_p')_E$ denote the normalised $U_{2, \Kl}'$-eigenvector at level $\Kl(p)$ defined in \cite[\S 20.1]{LZ20}.
    \item For $i = 2$ in case B, and $i = 1, 2$ in case C, we choose arbitrarily an ordering of the Hecke parameters  $(\fa_i, \fb_i)$, and we let $w_i^{\fa_i} \in \sW(\Sigma_{i,p}')$ be the $(U = \fa_i)$-eigenvector at level $\Iw_{\GL_2}(p)$.
    \item Let $\Phi_{\crit} \in \cS(\Qp^2, \QQ)$ be the unique Schwartz function such that $\Phi_{i, p}' = \ch(\Zp \times \Zp^\times)$, where $(\,)'$ denotes Fourier transform in the second variable only (cf.~\cite[\S 15.2]{LZ20}).
   \end{itemize}
   We define
   \[ \tau_p^{\Kl} =
   \begin{cases}
    \gamma_p w'_{\Kl} \boxtimes \Phi_{\crit} \boxtimes \Phi_{\crit} & \text{case A} \\
    \gamma_p w'_{\Kl} \boxtimes \Phi_{\crit} \boxtimes w_2^{\fa_2} & \text{case B} \\
    \gamma_p  w'_{\Kl} \boxtimes \phantom{\Phi_{\crit}} \mathllap{w_1^{\fa_1}} \boxtimes w_2^{\fa_2} & \text{case C}, \\
   \end{cases}\]
   where $\gamma_p \in \GSp_4(\Zp) / \Kl(p)$ has first column $(1,1,0,0)^T \bmod p$.
  \end{definition}

  \begin{proposition}
   \label{prop:klingendata}
   We have
   \[
    \tilde{Z}_p(\tau_p^{\Kl}) =
    \prod_{ \xi \in \{
         \fa_1 \fa_2,\,
         \fa_1 \fb_2,\,
         \fb_1 \fa_2\}} \left(1 - \tfrac{p^h}{\alpha\xi}\right)\left(1 - \tfrac{p^h}{\beta\xi}\right).
   \]
   In particular, if we are not in the ``exceptional type B'' case, then $\widetilde{Z}_p(\tau_p^{\Kl}) \ne 0$.
  \end{proposition}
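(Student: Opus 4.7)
The plan is to identify $\tilde{Z}_p$ explicitly as a local zeta integral and reduce the computation to an adaptation of the argument in \cite[\S 20]{LZ20}.

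First, by the preceding multiplicity-one result and $H(\Qp)$-equivariance, $\tilde{Z}_p$ is uniquely determined by the normalisation $\tilde{Z}_p(\tau_{p,\sph}) = 1$. Unravelling the definitions of the Lemma--Eisenstein maps $\LE_{\mot}$, $\widehat{\LE}_{\mot}$, $\widetilde{\LE}_{\mot}$ via \cref{prop:explicitZ} and the K\"unneth formula, $\tilde{Z}_p(\tau_p)$ may be identified, up to an explicit local transition factor, with a local Rankin--Selberg--Novodvorsky-type zeta integral of the type studied in \cite[\S 19]{LZ20}---but with the Eisenstein Schwartz functions used there replaced by the Whittaker functions $w_i^{\fa_i}$ of $\Sigma_{i,p}'$ whenever $\Sigma_i$ is cuspidal. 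Thus, in all three cases the computation takes place in the same integral-theoretic framework, with different input data at each $\GL_2(\Qp)$-slot.

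Second, in Case A the Proposition is essentially the content of the main calculation of \cite[\S 20]{LZ20}: the local integral applied to $\gamma_p w'_{\Kl} \boxtimes \Phi_{\crit} \boxtimes \Phi_{\crit}$ yields a finite product of geometric series summing to the claimed Euler factor, which matches the formula in the statement under the identifications $(\fa_i, \fb_i) = (p^{t_i+1}, \chi_{\Sigma_i}(p))$ from \cref{def:aibiEis}. Third, for Cases B and C, I would factor the local integral as an outer integral (depending on $\gamma_p w'_{\Kl}$) times inner integrals over $\GL_2(\Qp)$ for each $\Sigma_i$. The outer integral is common to all three cases and, by Klingen-ordinarity, projects $\Pi_p$ onto the eigenspace indexed by the units $\alpha, \beta$, thereby attaching to each product $\xi$ the pair of factors $(1 - p^h/(\alpha \xi))(1 - p^h/(\beta \xi))$. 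For each inner integral, replacing $\Phi_{\crit}$ by $w_i^{\fa_i}$ turns a Tate-style integral into a standard Iwahori-level local Rankin--Selberg integral; an elementary computation using the values of the $(U = \fa_i)$-eigenvector on standard diagonal representatives shows that the resulting geometric series contributes exactly the ratios $p^h/(x \fa_i \fa_j)$ and $p^h/(x \fa_i \fb_j)$ for $x \in \{\alpha,\beta\}$, mirroring the Eisenstein calculation of Case A with the true Hecke parameters in place. The three-term structure $\{\fa_1\fa_2,\, \fa_1\fb_2,\, \fb_1\fa_2\}$---as opposed to all four products---reflects a degeneracy in the outer integral caused by combining the Klingen-ordinary projector on $\Pi_p$ with the single $(U = \fa_i)$-eigenvector choice on each $\Sigma_{i,p}'$, which suppresses the $\fb_1\fb_2$-contribution.

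The principal obstacle is the verification in the third paragraph: one has to check explicitly that the Iwahori-level local integral with $w_i^{\fa_i}$ produces exactly the same formal Euler factor shape as the Schwartz-function computation in Case A. This requires writing down $w_i^{\fa_i}$ on a set of Iwahori coset representatives and matching it against the Fourier transform of $\Phi_{\crit}$, then carefully tracking signs and normalising volumes. Once this is done, the non-vanishing assertion follows from the Hecke-parameter inequalities: temperedness of $\Pi$ and $\Sigma_i$ pins down the complex absolute values of $\alpha \xi$ and $\beta \xi$, and the resulting factor can vanish only at a precise numerological coincidence of $p$-adic valuations; a direct case analysis using $v_p(\alpha) \ge 0$, $v_p(\alpha\beta) \ge r_2 + 1$, and the analogous bounds for $\fa_i, \fb_i$, shows that this coincidence arises exactly in the exceptional type-B scenario already excluded by our standing hypothesis.
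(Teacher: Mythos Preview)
Your approach is essentially the same as the paper's: both identify $\tilde{Z}_p(\tau_p^{\Kl})$ as a local zeta-integral and compute it by adapting the Case~A calculation of \cite[\S 20]{LZ20} to allow cuspidal Whittaker functions in place of Schwartz data. The paper's own proof consists solely of the sentence ``This follows from a local zeta-integral computation which is carried out in \cite{LZ20-zeta2}'', so the detailed verification you describe as ``the principal obstacle'' is precisely what is deferred to that companion paper; your sketch of how that computation should go (factoring through the Klingen-ordinary projector on the $\GSp_4$ side, replacing $\Phi_{\crit}$ by the Iwahori eigenvector $w_i^{\fa_i}$ on each $\GL_2$ factor, and tracking which geometric series survive) is a reasonable outline of the expected argument, though of course the actual bookkeeping of coset representatives and volume factors still needs to be done to confirm the exact product over $\{\fa_1\fa_2,\fa_1\fb_2,\fb_1\fa_2\}$.
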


  \begin{proof}
   This follows from a local zeta-integral computation which is carried out in \cite{loeffler-zeta2}.
  \end{proof}

%%%%%%%%%%%%%%%%%%%%%%%%%%
\section{Coherent cup products}
%%%%%%%%%%%%%%%%%%%%%%%%%%

 \subsection{Setup}
 \label{sect:periods}

  In this section, for each of the cases (A), (B), (C) we shall define a linear map
  \[ \operatorname{Per}^p_\nu: \mathscr{T}^p \to L. \]
  This will be defined using the Zariski cohomology of coherent sheaves on a $p$-adic integral model of the $\GSp_4$ Shimura variety (of Klingen parahoric level at $p$), in contrast to the \'etale cohomology used to define the regulator $\operatorname{Reg}^p_\mu$. However, it is important to note that both classes depend on the same input data, so it is meaningful to compare the two.

  We shall recall below that the values of $\operatorname{Per}^p_\nu$ can be interpreted as values (outside the interpolation range) of a suitable $p$-adic $L$-function. On the other hand, the main result of this paper will be to relate $\operatorname{Reg}^p_\nu$ to $\operatorname{Per}^p_\nu$.

  \begin{assumption}
   Henceforth we suppose $r_2 \ge 1$.
  \end{assumption}

 \subsection{Coherent classes from $\GL_2$ cuspforms}

  As noted above, for $i = 2$ in case (B), and $i = 1,2$ in case (C), there are canonical homomorphisms $\nu_i: \sW(\Sif')_E \to H^0(X_{\GL_2, E}, \omega^{t_i + 2})$.

  In particular, given any choice of vector $w_{p} \in \sW(\Sigma_{i, p}')$, we can form a map $\sW(\Sif^{\prime (p)})_E \to H^0(X_{\GL_2, E}, \omega^{t_i + 2})$ via $w^p \mapsto \nu_i(w^p w_{p})$. The relevant choices of $w_p$ will be:
  \begin{itemize}
   \item the spherical vector $w^{\sph}$, normalised by $w_{i, p}^{\sph}(1) = 1$;
   \item the vectors $w^{\fa_i}$ and $w^{\fb_i}$, which are the unique vectors invariant under $\{ \stbt{*}{*}{}{\star} \bmod p\}$ which are $U_p$-eigenvectors with eigenvalues $\fa_i$ and $\fb_i$ respectively, and satisfy $w^{\fa_i}(1) = w^{\fb_i}(1) = 1$;
   \item the ``$p$-depleted vector'' $w^{\mathrm{dep}}$, which is characterised by $w^{\mathrm{dep}}\left(\stbt{x}{0}{0}{1}\right)= \ch_{\Zp^\times}(x)$.
  \end{itemize}

  We denote the corresponding homomorphisms from $\sW(\Sif^{\prime (p)})_E$ by $(\nu_i^{\sph}$, $\nu_i^{\fa_i}$, $\nu_i^{\dep})$. After base-extending to $L$, we can view $\nu_i^{\dep}$ as taking values in the kernel of the $U_p$-operator acting on the space $\mathbf{S}_{t_i+2}(L)$ of $p$-adic modular forms. On this space, the Serre differential operator $\theta$ (acting as $q \tfrac{\mathrm{d}}{\mathrm{d}q}$ on $q$-expansions) is invertible; so we can make sense of $\theta^{-n}(\nu_i^{\dep})$, for any $n \in \ZZ$, as a map
  \[ \sW\left(\Sif^{\prime (p)}\right)_L \to \mathbf{S}_{t_i + 2 - 2n}(L). \]

 \subsection{Coherent classes from Eisenstein series}

  We now recall some facts from \cite[\S 15]{LZ20} on Eisenstein series. If $k \ge 1$,  and $\Phi \in \cS_{(0)}(\Af^2)$, we defined in \cite{LPSZ1} an Eisenstein series $E^{(k, \Phi)}(s)$ depending on a complex parameter $s$, which is a $C^\infty$ real-analytic section of $\omega^k$ on $Y_{\GL_2}$. We let $E^k_{\Phi} = E^{(k, \Phi)}(\tfrac{k}{2})$ and $F^k_{\Phi} = E^{(k, \Phi)}(1-\tfrac{k}{2})$, which are holomorphic modular forms; if $\Phi$ is $E$-valued they are defined over $E$ as sections of $\omega^k$.

  \begin{proposition}
   The de Rham realisation of the Eisenstein class $\Eis^t_{\mot, \Phi}$ is given by $F^{(t+2)}_{\Phi}$.
  \end{proposition}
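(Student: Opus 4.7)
The plan is to deduce this from the classical calculation of the de Rham realisation of Beilinson's motivic Eisenstein symbol, which has been worked out in the precise normalisation we need in earlier papers of the authors. The statement is essentially a restatement of Beilinson's original theorem (for $t=0$) and its generalisation by Kings (for general $t$) via the motivic polylogarithm on the universal elliptic curve; the only content is matching conventions.

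First I would recall that $\Eis^t_{\mot, \Phi}$ lies in $H^1_{\mot}(Y_{\GL_2}, \Sym^t \sH^\vee_{\QQ}(1))$, so its de Rham realisation is a class in $H^1_{\dR}(Y_{\GL_2}, \Sym^t \sH^\vee_{\dR}(1))$. Via the BGG resolution (or equivalently the Hodge filtration), there is a canonical surjection from $H^0(X_{\GL_2}, \omega^{t+2})$ onto the $\Fil^{t+1}$-part of this de Rham cohomology group, so it makes sense to identify a de Rham class with a holomorphic modular form of weight $t+2$. The standard computation, carried out explicitly in \cite[\S 4]{KLZ20} and recalled (in exactly the normalisation used here) in \cite[\S 15]{LZ20}, produces the value at $s = 1 - \tfrac{t+2}{2}$ of the real-analytic Eisenstein family $E^{(t+2, \Phi)}(s)$; by definition this value is $F^{(t+2)}_{\Phi}$.

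Thus the proof I would give is essentially a citation: apply \cite[Thm.~4.2.9]{KLZ20} (or the corresponding statement in \cite[\S 15]{LZ20}) and check that the parameter conventions match, noting in particular that the shift $s \mapsto 1 - k/2$ (rather than $s \mapsto k/2$) is forced by the fact that the de Rham realisation corresponds to the holomorphic — as opposed to antiholomorphic — component under the Hodge decomposition of the real-analytic Eisenstein class. The only mildly delicate point, and the main thing to verify, will be that the normalisation of $\Eis^t_{\mot, \Phi}$ used here (involving the renormalisation by volume factors in the definition of $\LE_{\mot}$ via $\cS_{(0)}$ for $t \geq 1$ versus $\cS_0$ for $t=0$) agrees with the one in the cited references; this is essentially bookkeeping with Schwartz functions and volumes of level subgroups, and presents no conceptual difficulty.
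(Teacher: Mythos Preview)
Your proposal is correct, and in fact provides more detail than the paper itself: the paper states this proposition without proof, treating it as a known result implicitly drawn from the references you identify (\cite[\S 4]{KLZ20} and \cite[\S 15]{LZ20}). Your plan to deduce it by citation, together with the check that the normalisations and the parameter value $s = 1 - \tfrac{k}{2}$ match, is exactly the right approach and is consistent with how the paper handles it.
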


  We shall take $\Phi = \Phi^p \Phi_p$, where $\Phi^p$ is an arbitrary Schwartz function away from $p$, and $\Phi_p$ is one of the following. Here $\Phi'_p$ denotes the Fourier transform in the second variable only, as in \cite[\S 15.2]{LZ20}.
  \begin{itemize}
  \item the spherical Schwartz function $\Phi_{\sph} = \ch(\Zp \times \Zp)$;
  \item the ``critical-slope'' Schwartz function $\Phi_{\crit}$ such that $\Phi'_{\crit} = \ch(\Zp \times \Zp^\times)$;
  \item the ``depleted'' Schwartz function $\Phi_{\dep}$ such that $\Phi'_{\dep} = \ch(\Zp^\times \times \Zp^\times)$.
  \end{itemize}

  For $i = 1, 2$ in case (A), and $i = 1$ in case (B), we write $\mu_i^{\dep}$ for the map
  \[ \cS_{(0)}\left( (\Af^{(p)})^2, \widehat{\chi}_{\Sigma_i}\right)[-t_i] \to \mathbf{S}_{t_i + 2}(L), \qquad \Phi^p \mapsto F^{(t_i+2)}_{\Phi^p \Phi_{\dep}}.\]
  We define $\mu_i^{\fa_i}$ similarly, using $\Phi_{\crit}$ in place of $\Phi_{\dep}$. The values of the maps $\mu_i^{\dep}$ and $\mu_i^{\crit}$ are modular forms which vanish at all cusps in the multiplicative locus of the modular curve, and hence are cuspidal as $p$-adic modular forms.

  \begin{definition}
   In all three cases, we define a map
   \[ \cG: \mathscr{S}^p \to \mathbf{S}_{r_1 - r_2 - t_2}(L) \boxtimes \mathbf{S}_{t_2 + 2}(L), \qquad
   \cG = \theta^{-(r_2 - q + 1)}\left(\mu_1^{\dep}\right) \boxtimes \mu_2^{\dep}. \]
  \end{definition}

  Note that the values of $\cG$ are $p$-adic modular forms which are never classical (unless they are 0).

 \subsection{Coherent classes from $\Pi$}

  Our final ingredient is a map
  \[ \nu_{\Kl}: \cW(\Pif^{\prime (p)}) \to H^2(X_{G, \Kl}(p), \cN^1).\]
  Write $\eta_{\Kl}$ for a generic element in the image of this map, and $\mu_{\dR}$ for the canonical lift to $\Fil^1\DdR(V_p(\Pi))$.
  We can regard $\nu$ as a map from $\cW(\Pif')$ to this space, so as above, it suffices to choose a Whittaker vector at $p$. We choose this to be the normalised generator of the $U_{2, \Kl}'$-ordinary eigenspace at Klingen parahoric level defined in \cite[\S 20.1]{LZ20}.

  We can consider the values of $\nu_{\Kl}$ as linear functionals on the $U_{2, \Kl}$-ordinary part of $H^1(X_{G, \Kl}(p), \cN^2(-D))$. By the classicity theorem proved in \cite[\S 3]{LPSZ1}, we can regard $\nu_{\Kl}$ as a linear functional on the cohomology of the multiplicative locus, $H^1(X_{G, \Kl}^{\ge 1}, \cN^2(-D))$, since these two spaces have the same ordinary part.

  \begin{definition}
   We define
   \( \operatorname{Per}_{\nu}: \mathscr{T}^p \to L\)
   by
   \[ \operatorname{Per}_{\nu} = \Big\langle \nu_{\Kl}, \iota_{1, \star}(\cG |_{X_H})\Big\rangle_{X_{G, \Kl}^{\ge 1}}, \]
   where
   \[ \iota_{1, \star}: H^0\left( X_{H, \Delta}^{\ord}(p), \omega^{(r_1 - r_2 - t_2, t_2 + 2)}(-D)\right) \longrightarrow
    H^1\left(X_{G, \Kl}^{\ge 1}(p), \cN^2(-D)\right) \]
   is the twisted pushforward map considered in \cite[\S 4]{LPSZ1}.
  \end{definition}

 \subsection{Statement of the main theorem}

  \begin{theorem}
   \label{thm:main}
   We have
   \[
    \operatorname{Reg}_\nu(\tau^p \tau_p^{\Kl})=
    \frac{(-2)^q (-1)^{r_2 - q  + 1}(r_2 - q)!}
    {\left(1 - \tfrac{p^h}{\alpha \fb_1 \fb_2}\right)
     \left(1 - \tfrac{p^h}{\beta \fb_1 \fb_2}\right)}
    \operatorname{Per}_\nu^p(\tau^p),
   \]
   for all choices of $\tau^p \in \mathscr{T}^p$.
  \end{theorem}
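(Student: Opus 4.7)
The plan is to follow the syntomic regulator strategy of \cite{LZ20}, which proved the analogous result in case (A), adapting it by replacing one or both of the Eisenstein $\GL_2$-inputs with cuspidal inputs. First, I invoke the \Nek--\Niz theorem \cite{nekovarniziol} to identify the Bloch--Kato logarithm of the local image at $p$ of $z(\tau^p \tau_p^{\Kl})$ with a cup-product in syntomic cohomology on the $\GSp_4$ Shimura variety at Klingen parahoric level. Combined with the explicit formulas in \cref{prop:explicitZ} for $z$, this expresses $\operatorname{Reg}_\nu(\tau^p \tau_p^{\Kl})$ as the pairing of $\nu_{\Kl}$ against the pushforward along $\hat\iota$ (case B) or $\tilde\iota$ (case C) of the syntomic realisations of the $\GL_2$-inputs --- one or two of which are the syntomic Eisenstein classes computed in \cite{KLZ20}, the remaining ones being the syntomic realisations of the cuspidal forms $g_i$.

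Next, I replace the critical-slope data at $p$ (involving $\Phi_{\crit}$ and/or $w_i^{\fa_i}$) with $p$-depleted data (involving $\Phi_{\dep}$ and/or $w_i^{\dep}$). On any $\GL_2$-factor the depleted input lies in the kernel of $U_p$, so it defines a $p$-adic but not classical modular form; crucially $\theta^{-1}$ is well-defined on such inputs, which is the source of the operator $\theta^{-(r_2-q+1)}$ appearing in $\cG$. The difference between the critical-slope and depleted pairings is an Euler factor arising from the local zeta-integral analysis of \cite[\S 19--20]{LZ20}: combined with $\widetilde Z_p(\tau_p^{\Kl})$ of \cref{prop:klingendata}, this accounts for the factor $(1 - p^h/(\alpha\fb_1\fb_2))(1 - p^h/(\beta\fb_1\fb_2))$ in the denominator of the theorem --- the two Euler factors ``missing'' from the six in $\widetilde Z_p(\tau_p^{\Kl})$, corresponding to the Hecke-parameter product $\fb_1 \fb_2$ not involved in the critical-slope stabilisation.

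With depleted data the computation unwinds via the branching rule of \cref{prop:branchingG}: the $H$-invariant vector inside $\iota^*V^\vee$ is not the naive tensor of highest-weight vectors but is obtained by applying $(r_2 - q)$ lowering operators in the first $\GL_2$-factor. On the coherent side this translates into applying $\theta^{-(r_2-q+1)}$ to that factor after dividing by $(r_2-q)!$; the prefactors $(-2)^q$ and $(-1)^{r_2-q+1}$ record respectively the normalisation of the lowering operators, the sign from the Serre-duality pairing, and the twist by $\mu^{q+t_1+t_2}$. After these identifications, the cup-product becomes exactly $\langle \nu_{\Kl}, \iota_{1,\star}(\cG|_{X_H})\rangle_{X_{G,\Kl}^{\ge 1}}$, which is $\operatorname{Per}_\nu^p(\tau^p)$ by definition.

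The main obstacle is the combinatorial bookkeeping of all local normalisations. The de Rham realisation $\nu_{i,\dR}$ of the Whittaker newform introduces Gauss sums $G(\chi_{\Sigma_i}^{-1})$; the Fourier-transform conventions used to define $\Phi_{\crit}$ and $\Phi_{\dep}$ produce further Gauss sums; and the twists $[-t_i]$ in the definition of $\mathscr{S}$ must be tracked through all three cases in parallel. Verifying that these factors cancel so that cases (B) and (C) recover the identical numerical constant as case (A) of \cite{LZ20} is a symbolic exercise once the spherical, critical-slope, and depleted normalisations on each $\GL_2$-factor have been aligned, but it is the step requiring the most care, particularly because in the cuspidal-input cases the Eisenstein-series manipulations of \emph{loc.~cit.}\ are replaced by the $q$-expansion principle applied to the newform $g_i$.
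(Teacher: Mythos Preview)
Your outline has the right global shape (syntomic regulator $\to$ coherent pairing, following \cite{LZ20}), but the mechanism you describe for the Euler factor is not the one that actually works, and a crucial step is missing.

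First, the two factors $\left(1 - \tfrac{p^h}{\alpha\fb_1\fb_2}\right)\left(1 - \tfrac{p^h}{\beta\fb_1\fb_2}\right)$ do \emph{not} come from local zeta-integrals or from comparing critical-slope and depleted test data. The theorem is a statement about $\operatorname{Reg}_\nu(\tau^p\tau_p^{\Kl})$ itself, so the six factors of \cref{prop:klingendata} never enter the argument; the two factors in the denominator arise instead from the normalisation of the trace map in finite-polynomial cohomology. Concretely, one passes from \Nek--\Niz syntomic cohomology to log-rigid fp-cohomology and then to Gros fp-cohomology over the ordinary locus $\cX_{G,\Kl}^{\ord}$, represents both $\eta$ and the $x^{(i)}$ as explicit coherent fp-pairs $(\breve\eta,\breve\zeta)$ and $(\epsilon_0^{(i)},\epsilon_1^{(i)})$, and evaluates their cup-product via Besser's $(a,b)$-splitting formalism. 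The factor $\Upsilon$ appears because the fp-polynomial for the product class is $1-\tfrac{p^{t_1+t_2+2}}{\fb_1\fb_2}T$ after restricting the cuspidal classes to the ordinary locus (where only the $\fb_i$-root survives).

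Second, your plan to ``replace critical-slope by depleted data'' before computing is not well-posed: the depleted inputs are $p$-adic, not classical, so they do not define syntomic classes one could feed into the regulator. In the paper the depletion happens only \emph{after} the coherent fp-pairing has been computed, and it requires a genuine new vanishing lemma (\cref{prop:weirdid}): if $U_p(\cG_1)=0$ then $\langle\eta^{\ord}_{\coh},\iota_\star(\cG_1\boxtimes\varphi(\cG_2))\rangle=0$. This kills the second term coming from Besser's formalism and allows one to replace $g_2$ by $g_2^{[p]}$ in the surviving term (since $g_2-g_2^{[p]}=\tfrac{1}{\fb_2}\varphi^*(g_2)$). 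Without this lemma the computation produces two terms, not one, and there is no way to match with $\operatorname{Per}^p_\nu$. Your outline does not account for this, and the ``combinatorial bookkeeping'' you flag in the last paragraph, while real, is not where the difficulty lies.
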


  Unravelling the notation a little, we can write this in the following form, which is what we shall actually prove. For $K^p$ an open compact subgroup of $G(\Af^p)$, write $K_1^p$ and $K_2^p$ for the first and second projections of $K_H^p = K^p \cap H$; and let
  \[ \hat{K}^p = K^p \times K_2^p,\qquad \tilde{K}^p = K^p \times K_1^p \times K_2^p.\]
  Meanwhile, let $\Kl(p)$ denote the Klingen parahoric,
  \[\hat{\Kl}(p) = \Kl(p) \times \Iw_{\GL_2}(p),\qquad \tilde{\Kl}(p) = \Kl(p) \times \Iw_{\GL_2}(p)\times\Iw_{\GL_2}(p); \]
  and let $K_{H, \Delta}(p) = H(\Zp) \cap \gamma_p \Kl(p) \gamma_p^{-1}$, described explicitly in \cite[\S 4.1]{LPSZ1}. We write $Y_{\Kl}(p), \widehat{Y}_{\Kl}(p), \widetilde{Y}_{\Kl}(p)$ for the Shimura varieties of level $K^p\Kl(p), \hat{K}^p \hat{\Kl}(p)$ etc, and similarly $Y_{H, \Delta}(p)$. We write $X_{\dots}$ for their toroidal compactifications.

  \begin{proposition}
   \label{prop:weprove}
   In case (C), the above formula is equivalent to the following statement: for all prime-to-$p$ levels $K^p \subset G(\Af^p)$, all $\eta \in e_{\Kl}' H^2(\Pif)^{K^p \Kl(p)}$, and all $\mu_i \in H^0(\Sif')^{K^p_i\Iw_{\GL_2}(p)}$ which are $U_p = \fa_i$ eigenvectors, we have
   \begin{multline*}
    \Big\langle \log \pr_{\tilde{\Pi}}\tilde{\iota}_{1, \star}(\mathbf{1}), \eta_{\dR} \boxtimes \mu_{1, \dR} \boxtimes \mu_{2, \dR}\Big\rangle_{\widetilde{Y}_{\Kl}(p)} \\
    = \frac{(-2)^q (-1)^{r_2 - q  + 1}(r_2 - q)!}
    {\left(1 - \tfrac{p^h}{\alpha \fb_1 \fb_2}\right)
    \left(1 - \tfrac{p^h}{\beta \fb_1 \fb_2}\right)}
    \Big\langle \eta, \iota_{1, \star}\left( \theta^{-(1+a)}(\mu_1^{[p]}) \boxtimes \mu_2^{[p]} \right)\Big\rangle_{X_{G, \Kl}^{\ge 1}(p)}.
    \end{multline*}
   Here $\mu_i^{[p]}$ is the $p$-depletion of $\mu_i^{\fa_i}$.
   There are analogous formulae in cases (B) and (A) in which the left-hand side is replaced with
   \[ \Big\langle \log \pr_{\hat{\Pi}}\hat{\iota}_{1, \star}(\Eis^{t_1}_{\et, \Phi_1^p\Phi_{\crit}} \boxtimes \mathbf{1}), \eta_{\dR} \boxtimes \mu_{2, \dR}\Big\rangle_{\widehat{Y}_{\Kl}(p)}, \]
   or
   \[ \Big\langle \log \pr_{\Pi} \iota_{1, \star}(\Eis^{t_1}_{\et, \Phi_1^p\Phi_{\crit}} \boxtimes \Eis^{t_2}_{\et, \Phi_2^p\Phi_{\crit}}), \eta_{\dR}\Big\rangle_{Y_{\Kl}(p)}, \]
   for any choices of $(\eta, \Phi_1^p, \mu_2)$, resp. $(\eta, \Phi_1^p, \Phi_2^p)$.
  \end{proposition}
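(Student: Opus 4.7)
The plan is to show that \cref{thm:main}, once unravelled using the definitions of the previous section, is equivalent to the displayed formula in each of the three cases. I focus on case (C); cases (A) and (B) differ only in that the Whittaker functional $\nu_i$ on a cusp form is replaced, for the Eisenstein input, by the de Rham realisation of $\Eis^{t_i}_{\mot, \Phi_i^p \Phi_{\crit}}$, which by the identification of $\Eis^{t}_{\mot, \Phi}$ with $F^{(t+2)}_{\Phi}$ slots into the same formal framework.

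First, I use the multiplicity-one factorisation of the preceding subsection to write $\operatorname{Reg}_\nu(\tau^p \tau_p^{\Kl}) = \tilde Z_p(\tau_p^{\Kl}) \cdot \operatorname{Reg}^p_\nu(\tau^p)$, and substitute the explicit value of $\tilde Z_p(\tau_p^{\Kl})$ from \cref{prop:klingendata}. This contributes the six Euler factors indexed by $\xi \in \{\fa_1\fa_2, \fa_1\fb_2, \fb_1\fa_2\}$. Together with the two factors of the denominator of \cref{thm:main} (indexed by $\xi = \fb_1\fb_2$), these produce the full product over all four pairs in $\{\fa_1,\fb_1\}\times\{\fa_2,\fb_2\}$, doubled over $\{\alpha,\beta\}$; this is exactly the collection of factors that must cancel the Euler factors appearing when one passes from depleted to $\fa_i$-stabilised data on the right.

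Second, expand $\operatorname{Reg}_\nu(\tau) = \langle \log z(\tau), \tilde\nu_{\dR}\rangle$ via \cref{prop:explicitZ}, which writes $z(\xi \cdot \tilde w)$ as a volume factor times the étale pairing of $\tilde\iota_{U,\star}(\mathbf{1})$ with $\tilde w$. Pairing against the Whittaker vector $\tilde w$ is precisely projection onto the $\tilde\Pi$-isotypic component via $\pr_{\tilde\Pi}$. The de Rham class $\tilde\nu_{\dR} = \nu_{\dR} \otimes \nu^{(1)}_{\dR} \otimes \nu^{(2)}_{\dR}$, under the canonical identification of $\Fil^1 \DdR(V_p(\Sigma_i))$ with coherent cohomology of $\omega^{t_i+2}$, corresponds to $\eta_{\dR} \boxtimes \mu_{1,\dR} \boxtimes \mu_{2,\dR}$. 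Thus the left-hand side of \cref{thm:main} becomes exactly the pairing displayed in the proposition. Third, expand $\operatorname{Per}^p_\nu(\tau^p) = \langle \nu_{\Kl}, \iota_{1,\star}(\cG|_{X_H})\rangle$ with $\cG = \theta^{-(r_2-q+1)}(\mu_1^{\dep}) \boxtimes \mu_2^{\dep}$; since $r_2 - q + 1 = 1 + a$ with $a = r_2 - q$, it remains to identify $\mu_i^{\dep}$ with the $p$-depletion $\mu_i^{[p]}$ of $\mu_i^{\fa_i}$. This holds because $w_p^{\dep}$ and the $p$-depletion of $w_p^{\fa_i}$ yield $p$-adic modular forms with identical $q$-expansion on the ordinary locus: both amount to killing the coefficients of the newform at integers divisible by $p$.

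The main obstacle is the bookkeeping of local factors at $p$: the volume factors $\operatorname{vol}(V)$ from \cref{prop:explicitZ}, the Gauss sums $G(\chi_{\Sigma_i}^{-1})$ built into the normalisation of $\nu_{i,\dR}$, and the precise Euler factors generated by the $p$-depletion step. These must conspire with $\tilde Z_p(\tau_p^{\Kl})$ and the denominator of \cref{thm:main} to leave exactly the numerical constant $(-2)^q (-1)^{r_2 - q + 1}(r_2 - q)!$ on the right-hand side. This combinatorial accounting is routine but delicate; the analogous exercise for cases (A) and (B) is strictly simpler because the appropriate $\Phi_{\crit}$-Schwartz data at $p$ replaces the choice of $\fa_i$-stabilisation on the Eisenstein side, so fewer Euler factors intervene.
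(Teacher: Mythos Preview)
Your second and third steps are essentially the right unwinding, and they are all the paper uses: it simply states that the equivalence follows from \cref{prop:explicitZ}. Concretely, the left-hand side of \cref{prop:weprove} \emph{is} $\operatorname{Reg}_\nu(\tau^p\tau_p^{\Kl})$ up to unpacking the definitions (via \cref{prop:explicitZ} and the identification of $\tilde\nu_{\dR}$ with $\eta_{\dR}\boxtimes\mu_{1,\dR}\boxtimes\mu_{2,\dR}$), and the right-hand side pairing \emph{is} $\operatorname{Per}^p_\nu(\tau^p)$ by definition, since $\mu_i^{[p]}=\mu_i^{\dep}(w_i^p)$ and $1+a=r_2-q+1$.

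Your first step, however, is a genuine misstep. You factor $\operatorname{Reg}_\nu(\tau^p\tau_p^{\Kl}) = \tilde Z_p(\tau_p^{\Kl})\cdot\operatorname{Reg}^p_\nu(\tau^p)$ and invoke \cref{prop:klingendata} to produce six extra Euler factors, arguing that these combine with the two in the denominator of \cref{thm:main} to give a full eight-fold product which then cancels against factors coming from ``passing from depleted to $\fa_i$-stabilised data on the right''. But look at the two displayed formulae: \cref{thm:main} and \cref{prop:weprove} carry \emph{exactly the same} constant
\[
\frac{(-2)^q(-1)^{r_2-q+1}(r_2-q)!}{\bigl(1-\tfrac{p^h}{\alpha\fb_1\fb_2}\bigr)\bigl(1-\tfrac{p^h}{\beta\fb_1\fb_2}\bigr)}.
\]
There is no Euler-factor discrepancy to reconcile. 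Moreover, the right-hand side of \cref{prop:weprove} already involves the \emph{depleted} forms $\mu_i^{[p]}$, just as $\operatorname{Per}^p_\nu$ does, so no stabilisation-to-depletion passage occurs there either. The factor $\tilde Z_p(\tau_p^{\Kl})$ simply does not enter this equivalence; it would be relevant if one wanted to rewrite \cref{thm:main} in terms of $\operatorname{Reg}^p_\nu(\tau^p)$ rather than $\operatorname{Reg}_\nu(\tau^p\tau_p^{\Kl})$, but that is not what \cref{prop:weprove} asserts. Drop your first paragraph and the last paragraph's Euler-factor accounting, and what remains is correct and matches the paper.
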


  The equivalence of this and \cref{thm:main} follows from \cref{prop:explicitZ}. It is the formula of \cref{prop:weprove} that we shall actually prove.

  \begin{remark}
   Note that the class $\eta_{\dR}$ is annihilated by $(1 - \tfrac{\varphi}{\alpha})(1 - \tfrac{\varphi}{\beta})$, and $\mu_{i, \dR}$ is annihilated by $\left(1 - \tfrac{\varphi}{\fa_i}\right)\left(1 - \tfrac{\varphi}{\fb_i}\right)$. Here $\varphi$ denotes the ``abstract'' Frobenius of rigid cohomology. Over the ordinary locus, this Frobenius element has a canonical lifting given by a suitable Hecke operator, and we shall see that the restriction of $\mu_{i, \dR}$ to this locus is in fact killed by the simpler polynomial $\left(1 - \tfrac{\varphi}{\fb_i}\right)$. (A little care is required here in the the non-cuspidal cases, since in these cases $\mu_{i, \dR}$ itself is exact on the ordinary locus; but it is not ``cuspidally exact'' -- it does not have a cuspidal antiderivative -- and hence represents a non-trivial class in \textit{compactly-supported} cohomology, which is annihilated by $1 - \tfrac{\varphi}{\fb_i}$.)

   So $\left(1 - \tfrac{\varphi}{\alpha \fb_1 \fb_2}\right)\left(1 - \tfrac{\varphi}{\beta \fb_1 \fb_2}\right)$ annihilates $\eta_{\dR} \boxtimes \mu_{1, \dR} \boxtimes \mu_{2, \dR}$ on this region, which explains the appearance of the Euler factor in the above formulae.
  \end{remark}

 \section{The relation to $p$-adic $L$-functions}

  In this section we recall how the coherent period $\operatorname{Per}_{\nu}(\tau^p)$ is related to values of $p$-adic $L$-functions, summarising results proved in more detail in \cite{LPSZ1} for cases (A), (B), and in \cite{LZvista} for case (C).

  \subsection{Cases A and B}

  In case A, and in a subcase of case B (when $t_2 \le r_1 - r_2 - 1$), we can deform the Eisenstein representations $p$-adically, while keeping the cuspidal automorphic representations fixed; this simplifes the statements considerably. We shall also assume that the field $E$ contains a root of unity of order $N_2$, where $N_2$ is the conductor of  $\chi_{\Sigma_2}$; this allows us to sidestep some rationality issues involving Gauss sums.

  \begin{theorem} \
   \begin{enumerate}
    \item In case (A), there exists an element $\cL_{p, \nu}(\Pi; \chi_{\Sigma_1}, \chi_{\Sigma_2}) \in \Lambda_L(\Zp^\times \times \Zp^\times)$ with the following property: for $a_1 + \rho_1, a_2 + \rho_2$ locally algebraic characters with $0 \le a_i \le r_1 - r_2$ and $(-1)^{a_1 + a_2}\rho_1\rho_2(-1) = -\chi_{\Sigma_2}(-1)$, we have
    \[ \cL_{p, \nu}(\Pi; \chi_{\Sigma_1}, \chi_{\Sigma_2})(a_1 + \rho_1, a_2 + \rho_2) = (\star) \cdot \frac{L(\Pi \times \rho_1^{-1}, \tfrac{1-r_1+r_2}{2} + a_1) L(\Pi \times \chi_{\Sigma_2} \rho_2^{-1}, \tfrac{1-r_1+r_2}{2} + a_2)}{\Omega_{\Pi}(\nu)}. \]
    \item In case (B), suppose $t_2 \le r_1 - r_2 - 1$. Then there exists an element $\cL_{p, \nu}(\Pi \times \Sigma_2) \in \Lambda_L(\Zp^\times)$ with the following property: for $a + \rho$ a locally algebraic character with $0 \le a \le r_1 - r_2 - 1 - t_2$, we have
     \[ \cL_{p, \nu}(\Pi \times \Sigma_2)(a + \rho) = (\star) \cdot \frac{L(\Pi \times \Sigma, \tfrac{2-r_1+r_2+t_2}{2} + a)}{\Omega_{\Pi}(\nu)}. \]
   \end{enumerate}
  \end{theorem}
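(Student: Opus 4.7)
The plan is to $p$-adically interpolate the coherent period map $\operatorname{Per}^p_\nu$ in the Eisenstein data. Since $\Pi$ is fixed and Klingen-ordinary, the coherent class $\nu_{\Kl}$ (and, in case B, the cuspidal $\GL_2$-contribution $\nu_2$) is rigid under the deformation; all variation lives in the $p$-depleted Eisenstein building blocks $\mu_i^{\dep}$, which is exactly what makes the construction $p$-adically analytic in the relevant variables.

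First, I would package $\mu_1^{\dep}$ (case B), respectively $\mu_1^{\dep} \boxtimes \mu_2^{\dep}$ (case A), into a $\Lambda_L(\Zp^\times)$- (resp. $\Lambda_L(\Zp^\times \times \Zp^\times)$-)valued $p$-adic modular form on the appropriate $\GL_2$ or $\GL_2 \times \GL_2$ Shimura variety. Concretely, for a fixed tame Schwartz datum $\Phi^p$, the family $\theta^{-n}\bigl(F^{(t+2)}_{\Phi^p \Phi_{\dep}}\bigr)$ depends $p$-adically analytically on $(t,n)$ via a standard Eisenstein measure of Katz type; the key technical point is that $p$-depletion pushes the forms into slope zero, where $\theta^{-1}$ is defined and bounded with respect to the $U_p$-norm. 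Pushing this family forward via $\iota_{1,\star}$ and pairing with $\nu_{\Kl}$ via the Serre duality pairing on $X_{G,\Kl}^{\ge 1}(p)$ then produces an element $\cL_{p,\nu} \in \Lambda_L$; the pairing extends continuously to $\Lambda_L$-valued inputs precisely because $\nu_{\Kl}$ factors through the $U_{2,\Kl}$-ordinary quotient. By construction, specialisation at a classical character recovers $\operatorname{Per}^p_\nu(\tau^p)$ for the corresponding $\tau^p$.

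The interpolation range $0 \le a_i \le r_1 - r_2$ (case A), respectively $0 \le a \le r_1 - r_2 - 1 - t_2$ (case B), is dictated by the classicity theorem of \cite[\S 3]{LPSZ1} applied to the $U_{2,\Kl}$-ordinary part of $H^2(X_{G,\Kl}(p), \cN^1)$, combined with the weight inequalities of \cref{prop:branchingG}; the asymmetric shift in case (B) reflects the fact that $\Sigma_2$ is fixed and cuspidal, so only $t_1$ is free to vary. To verify the interpolation formula, I would combine \cref{thm:main} (which expresses $\operatorname{Per}^p_\nu$ in terms of the $p$-adic regulator $\operatorname{Reg}^p_\nu$ up to an explicit Euler factor) with the classical description of $\operatorname{Reg}^p_\nu$ as a ratio of a classical $L$-value to the period $\Omega_\Pi(\nu)$, as in \cite{LZ20}; the Euler factors, Gauss sums from the $\nu^{(i)}_{\dR}$, and normalisation constants are absorbed into the implicit factor $(\star)$. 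The main obstacle is bookkeeping rather than conceptual: one has to confirm that the chosen $p$-adic normalisations of Whittaker vectors, Eisenstein measures, and ordinary projectors combine to give exactly one $L$-factor per Eisenstein variable (two in case A, one in case B), with the expected shifts of the $s$-variable and the expected coincidence between the boundary of the interpolation range and the range of critical values.
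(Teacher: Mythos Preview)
The paper does not prove this theorem; it quotes it from \cite{LPSZ1} (see the opening sentence of Section~5). Your sketch of the \emph{construction} of $\cL_{p,\nu}$ --- packaging $p$-depleted Eisenstein data into $\Lambda$-adic families, pushing forward along $\iota_{1,\star}$, and pairing with the Klingen-ordinary class $\nu_{\Kl}$ on $X_{G,\Kl}^{\ge 1}(p)$ --- is essentially the construction carried out in \cite{LPSZ1}, so that part is fine.

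Your verification of the interpolation formula, however, is logically inverted. You propose to deduce the interpolation property from \cref{thm:main} together with a ``classical description of $\operatorname{Reg}^p_\nu$ as a ratio of a classical $L$-value to the period $\Omega_\Pi(\nu)$''. There are two problems. First, \cref{thm:main} is the main result of the present paper and is proved \emph{after} this theorem is invoked; using it here would be circular. Second, and more seriously, no such ``classical description'' of $\operatorname{Reg}^p_\nu$ is available independently: relating $\operatorname{Reg}^p_\nu$ to complex $L$-values is precisely the content of the Perrin--Riou/Beilinson-type conjecture that the whole programme is aimed at. The flow of information is the reverse of what you suggest: one first proves the interpolation formula at points in the critical range (region (f) of \cite[Figure~2]{LZvista}) by identifying the coherent pairing there with a genuine complex zeta integral (Novodvorsky's integral for $\GSp_4 \times \GL_2$), which is how the ratio $L(\dots)/\Omega_\Pi(\nu)$ enters. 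The values covered by \cref{thm:main} and by $\operatorname{Per}^p_\nu$ lie in region (e), strictly \emph{outside} the interpolation range, as the paper notes immediately after the Proposition following the theorem. So your proposed route cannot reach the interpolation formula at all: it addresses the wrong set of specialisations and assumes the conclusion one is ultimately after.
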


  The following formula for the coherent period $\operatorname{Per}_{\nu}$ drops out naturally from the construction:

  \begin{proposition}
   In case (A) we have
   \[ \operatorname{Per}_{\nu}(\tau^p) = \cL_{p, \nu}(\Pi; \chi_{\Sigma_1}, \chi_{\Sigma_2})(-1-r_2 + q, r) \cdot \prod_{\ell \ne p} \widetilde{Z}_\ell(\tau_\ell) \]
   where $\widetilde{Z}_\ell$ is a non-zero linear functional on $\mathscr{T}_\ell$, which is 1 on the spherical data if $\ell$ is an unramified prime. In case (B), we have the similar formula
   \[ \operatorname{Per}_{\nu}(\tau^p) = \cL_{p, \nu}(\Pi \times \Sigma_2)(-1-r_2 + q) \cdot \prod_{\ell \ne p} \widetilde{Z}_\ell(\tau_\ell). \]
  \end{proposition}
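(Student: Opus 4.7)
The plan is to prove the statement by direct comparison of the construction of $\operatorname{Per}_\nu$ with the construction of the $p$-adic $L$-function $\cL_{p,\nu}$ from \cite{LPSZ1}. The key observation is that both objects are built from the same basic ingredient, namely the pairing of the Klingen-ordinary coherent class $\nu_{\Kl}$ attached to $\Pi$ against a pushforward from the $H$-Shimura variety of a product of suitable modular forms on $\GL_2$, realised on the multiplicative locus of the $\GSp_4$ Shimura variety.

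First I would recall from \cite{LPSZ1} that in case (A) the distribution $\cL_{p,\nu}(\Pi;\chi_{\Sigma_1},\chi_{\Sigma_2})$ is defined as the $p$-adic measure on $\ZZ_p^\times\times\ZZ_p^\times$ obtained by interpolating the family of cup-products
\[ \Big\langle \nu_{\Kl},\ \iota_{1,\star}\bigl( \mathcal{E}_1(s_1)\boxtimes \mathcal{E}_2(s_2)\bigr)\Big\rangle_{X_{G,\Kl}^{\ge 1}(p)}, \]
where $\mathcal{E}_i(s_i)$ is the $p$-adic family of nearly-holomorphic Eisenstein series deforming the $p$-depleted Eisenstein series $F^{(t_i+2)}_{\Phi^p\Phi_{\dep}}$, and the twist by $\theta^{-(r_2-q+1)}$ on the first factor encodes the shift built into the $(r_1-r_2,0)$-twisted pushforward $\iota_{1,\star}$ of \cite[\S 4]{LPSZ1}. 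Analogously in case (B), the distribution $\cL_{p,\nu}(\Pi\times\Sigma_2)$ interpolates the cup-products where the second Eisenstein family is replaced by the fixed $p$-depleted form $\mu_2^{\dep}$ attached to $\Sigma_2$.

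Next I would evaluate these $p$-adic families at the specific arithmetic points $(-1-r_2+q,r)$ (case A) and $-1-r_2+q$ (case B). A direct computation on $q$-expansions shows that at this point the family $\mathcal{E}_1(s_1)$ specialises to $\theta^{-(r_2-q+1)}\bigl(F^{(t_1+2)}_{\Phi^p\Phi_{\dep}}\bigr) = \theta^{-(r_2-q+1)}\mu_1^{\dep}(\Phi^p)$, and in case (A) the family $\mathcal{E}_2(s_2)$ specialises to $\mu_2^{\dep}$. By definition of $\cG$, these specialisations are precisely the factors of $\cG$ appearing in $\operatorname{Per}_\nu$, so after pushforward and pairing one obtains exactly $\operatorname{Per}_\nu(\tau^p)$ at the level factor ``at $p$'' coming from the depletion of both Eisenstein factors (resp.~of $\Phi_1^p$ and of $\mu_2$ in case B).

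Finally, the factorisation away from $p$ follows from the local zeta-integral decomposition already used in \cite{LPSZ1} to establish the interpolation property: the global cup-product decomposes as a product over all places of local zeta integrals, and for $\ell\neq p$ these are by definition the functionals $\widetilde{Z}_\ell$ of the proposition (normalised to take value $1$ on spherical data at unramified primes). The main subtlety I anticipate is not the structural argument, which is essentially formal once both constructions are in hand, but rather the bookkeeping of normalisations: in particular matching the Gauss sum $G(\chi_{\Sigma_i}^{-1})$ appearing in $\nu_{i,\dR}$ with the analogous factor in the Eisenstein interpolation, tracking the sign/shift conventions between $E^{(k,\Phi)}(s)$ and $F^k_\Phi$, and in case (B) verifying that the assumption $t_2 \le r_1 - r_2 - 1$ places the relevant point genuinely inside the interpolation range so that no Euler-factor corrections arise. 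Once these normalisations are aligned, the two proposed equalities reduce to an identity of cup-products that is true by construction.
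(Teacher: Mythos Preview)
Your approach is essentially the same as the paper's: the paper does not give a separate proof, stating only that the formula ``drops out naturally from the construction'' of the $p$-adic $L$-functions in \cite{LPSZ1}, and your proposal is a reasonable elaboration of what that means.

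There is, however, one point of confusion worth correcting. You write that the ``main subtlety'' in case~(B) is to verify that the assumption $t_2 \le r_1 - r_2 - 1$ places the evaluation point $-1-r_2+q$ inside the interpolation range. In fact the opposite is true: since $q \le r_2$ we have $-1 - r_2 + q < 0$, so (as the paper remarks immediately after this proposition) the relevant point always lies \emph{outside} the interpolation range. The identity does not go through the interpolation formula at all; rather, $\cL_{p,\nu}$ is by construction the element of the Iwasawa algebra whose value at \emph{every} locally algebraic character is the corresponding coherent cup-product on $X_{G,\Kl}^{\ge 1}$, and at the point in question this cup-product is exactly the pairing defining $\operatorname{Per}_\nu$. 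The hypothesis $t_2 \le r_1 - r_2 - 1$ is needed only so that the interpolation region is nonempty and hence $\cL_{p,\nu}(\Pi\times\Sigma_2)$ is defined, not to place our point inside it. With this adjustment your outline is fine.
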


  Note that $-1-r_2 + q < 0$, so these values always lie \emph{outside} the range of interpolation of the $p$-adic $L$-function. To include case (C), and the remaining subcases of (B), in our discussion we need to allow the $\GL_2$ cusp forms to vary in $p$-adic families.

 \subsection{Hida families and case (C)}

  For case C, we use the following setup. The theory of local newvectors shows that $\mathscr{T}^p$ has a canonical non-zero vector $\tau^p_{\mathrm{new}}$ such that $\mathscr{T}^p = \Gt(\Af^p) \cdot \tau^p_{\mathrm{new}}$. Fix an element
  \[ \gamma^{(p)}=(\gamma_0^{(p)},\gamma_1^{(p)},\gamma_2^{(p)})\in H(\Af^{p}) \backslash \Gt(\Af^p).\]

  \begin{theorem}
   \label{thm:padicLfcnA}
   (c.f. \cite[Thm. 4.4.1]{LZvista}) Let $\underline{\cG}=(\cG_{1},\cG_{2},\cc)$ be a self-dual twist of a pair of Hida families, with $\chi_\Pi \cdot \chi_{\cG_1}\cdot \chi_{\cG_2}=1$, with coefficients in a finite extension $\mathbb{I}$ of $\Lambda_{\cO}(\ZZ_p^\times\times\ZZ_p^\times)$.
   There exists a $p$-adic $L$-function
   \[
    \cL^{(f)}_{p, \nu, \gamma^{(p)}}(\Pi\times \ucG)(\kappa_1,\kappa_2)
    \in \mathbb{I}\otimes_{\cO}\bar{L},
   \]
   which interpolates the Gan--Gross--Prasad period on $\Pi\times\Sigma(\cG_{1,\kappa_1})\times\Sigma(\cG_{2,\kappa_2})$ in the region
   \[ \mathfrak{X}_{(f)}=\{ (\kappa_1,\kappa_2): \kappa_i\geq 1,\, \kappa_1+\kappa_2 \leq r_1-r_2+2, \kappa_1 + \kappa_2 = r_1 + r_2 \bmod 2\}\subset \mathfrak{X}_{\cG_1}\times \mathfrak{X}_{\cG_2}. \]
  \end{theorem}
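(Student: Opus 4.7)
The plan is to construct $\cL^{(f)}_{p,\nu,\gamma^{(p)}}$ by deforming the coherent period map $\operatorname{Per}^p_\nu$ of Section 4 along the Hida family directions, keeping $\Pi$ (and hence the Klingen-ordinary coherent class on the $\GSp_4$ Shimura variety) fixed. The superscript $(f)$ signals that we work in the range where this coherent construction applies directly, without additional syntomic or differential-operator input beyond what is already built into $\operatorname{Per}^p_\nu$.

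First I would upgrade the classical inputs $\mu_1^{[p]}, \mu_2^{[p]}$ appearing in $\operatorname{Per}^p_\nu$ to $\Lambda$-adic families. Hida theory attaches to each $\cG_i$ a universal $\Lambda$-adic ordinary cuspform with coefficients in $\mathbb{I}$, whose weight-$\kappa_i$ specialisation is the $p$-stabilisation of a classical newform generating $\Sigma(\cG_{i,\kappa_i})$; passing to $p$-depletions lands these in the Hida module $\mathbf{S}^{\mathrm{ord}}_{\Lambda,\mathbb{I}}$. The Serre operator $\theta$ is invertible on $p$-depleted forms and commutes with Hida's ordinary structure, so the twist by $\theta^{-(r_2 - q + 1)}$ appearing in the definition of $\cG$ deforms into a two-variable $\mathbb{I}$-adic family over $\mathfrak{X}_{(f)}$. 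The linear bound $\kappa_1 + \kappa_2 \le r_1 - r_2 + 2$ cuts out the locus where the theta-exponent stays in the admissible range for $\Lambda$-adic interpolation, while the parity condition $\kappa_1 + \kappa_2 \equiv r_1 + r_2 \pmod 2$ matches the central-character compatibility $\chi_\Pi \chi_{\cG_1} \chi_{\cG_2} = 1$ supplied by the self-dual twist $\cc$.

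Next I would pushforward this $\mathbb{I}$-adic class along $\iota_1$ (using $\gamma^{(p)}$ to encode the $H(\Af^p)$-coset representative for test data away from $p$) and apply Hida's Klingen-ordinary projector $e'_{\Kl}$ on coherent cohomology of the $\GSp_4$ Shimura variety, then pair with $\nu_{\Kl}(\eta)$ for a vector $\eta$ extracting the fixed $\Pi$-isotypic component. Because the Klingen-ordinary coherent cohomology of $\GSp_4$ is projective of finite rank over the weight-space Iwasawa algebra (via the higher Hida theory of \cite{LPSZ1}), the resulting pairing produces a well-defined element of $\mathbb{I}\otimes_\cO \bar{L}$, which defines $\cL^{(f)}_{p,\nu,\gamma^{(p)}}(\Pi \times \underline{\cG})$.

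Interpolation at each classical point $(\kappa_1,\kappa_2)\in\mathfrak{X}_{(f)}$ is then verified by specialising: the fibre computes to $\operatorname{Per}^p_\nu$ applied to the test datum $\tau^p$ assembled from the weight-$\kappa_i$ classical forms $\cG_{i,\kappa_i}$ and from $\gamma^{(p)}$, and this classical period unfolds via the standard $\GSp_4 \times \GL_2 \times \GL_2$ zeta integral (à la Novodvorsky and Piatetski--Shapiro) into the GGP period on $\Pi\times\Sigma(\cG_{1,\kappa_1})\times\Sigma(\cG_{2,\kappa_2})$ up to an explicit Euler factor at $p$ and the archimedean period $\Omega_\Pi(\nu)$. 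The main obstacle is the $\Lambda$-adic integrality of the inverse-theta construction across a two-variable weight space: interpolating $\theta^{-n}$ as the exponent $n = n(\kappa_1, \kappa_2)$ varies linearly with the weights must be done compatibly with the ordinary projector on both the $\GL_2$ and the $\GSp_4$ sides, and it is exactly this compatibility which forces the cutoff $\kappa_1 + \kappa_2 \le r_1 - r_2 + 2$ defining $\mathfrak{X}_{(f)}$.
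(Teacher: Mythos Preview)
The paper does not prove this theorem; it is quoted from \cite[Thm.~4.4.1]{LZvista}, so there is no ``paper's own proof'' to compare against. Your outline of the construction --- interpolate the $p$-depleted $\GL_2$ forms in Hida families, apply a $\Lambda$-adic $\theta$-power, push forward along $\iota_1$, apply the Klingen-ordinary projector, and pair with $\nu_{\Kl}$ --- is indeed the shape of the argument in \cite{LZvista}, and the use of higher Hida theory from \cite{LPSZ1} to control the target is correct.

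However, your account of the region $\mathfrak{X}_{(f)}$ is inverted. You describe $\mathfrak{X}_{(f)}$ as ``the locus where the theta-exponent stays in the admissible range for $\Lambda$-adic interpolation'' and as a ``cutoff'' forced by integrality of $\theta^{-n}$. In fact the $\Lambda$-adic construction exists uniformly over weight space; what singles out $\mathfrak{X}_{(f)}$ is that there the exponent $q-r_2-1$ becomes \emph{non-negative}, so the $\theta$-power is the classical Maass--Shimura differential operator and the specialisation is a genuine nearly-holomorphic form whose pairing with $\eta$ computes an archimedean automorphic period (the Gan--Gross--Prasad period). The region where $\operatorname{Per}^p_\nu$ is defined --- region (e) of \cite[Figure~2]{LZvista}, i.e.\ the branching-law range of \cref{prop:branchingG} --- is \emph{disjoint} from $\mathfrak{X}_{(f)}$: there the $\theta$-exponent is strictly negative, the construction is genuinely $p$-adic, and \cref{prop:periodformulaC} identifies those values with $\operatorname{Per}^p_\nu$ \emph{outside} the interpolation range. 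So you should not say you are ``deforming $\operatorname{Per}^p_\nu$'' to produce the interpolation in $\mathfrak{X}_{(f)}$; rather, the same $\Lambda$-adic object interpolates classical periods in (f) and specialises to $\operatorname{Per}^p_\nu$ in (e). A minor point: the unfolding at classical points in $\mathfrak{X}_{(f)}$ is not via Novodvorsky's $\GSp_4\times\GL_2$ integral but via the $\SO(4)\subset\SO(5)$ period integral underlying the Gan--Gross--Prasad conjecture.
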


  \begin{proposition}
   \label{prop:periodformulaC}
   For $(t_1, t_2)$ such that $(r_1, r_2, t_1, t_2)$ satisfies \cref{prop:branchingG}, we have the formula
   \[ \operatorname{Per}_\nu(\gamma^{(p)} \cdot \tau^p_{\mathrm{new}}) = \cL^{(f)}_{p,\nu,\gamma^{(p)}}(\Pi\times \ucG)(t_1 + 2, t_2 + 2).\]
  \end{proposition}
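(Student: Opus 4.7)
The plan is to unfold the definition of the $p$-adic $L$-function $\cL^{(f)}_{p,\nu,\gamma^{(p)}}$ from \cite{LZvista} and match it, term by term, with the definition of $\operatorname{Per}_\nu$ given in Section 4. Both objects are ultimately the same coherent cup product on the ordinary locus $X_{G,\Kl}^{\ge 1}(p)$: one pushes forward a pair of $p$-depleted $p$-adic $\GL_2$-cuspforms (after an inverse Serre-theta operator on the first factor) along $\iota_1$ and pairs with $\nu_{\Kl}$. The only difference is that in $\cL^{(f)}_{p,\nu,\gamma^{(p)}}$ the cuspforms come from Hida families, whereas in $\operatorname{Per}_\nu$ they are the fixed classical forms coming from $\Sigma_1,\Sigma_2$.

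First, I would recall from \cite[\S 4]{LZvista} that the defining formula for $\cL^{(f)}_{p,\nu,\gamma^{(p)}}(\Pi\times\ucG)(\kappa_1,\kappa_2)$ is essentially
\[
 \Big\langle \nu_{\Kl},\, \iota_{1,\star}\big( \theta^{-(1+a(\kappa_1,\kappa_2))}(\cG_{1,\kappa_1}^{[p]}) \boxtimes \cG_{2,\kappa_2}^{[p]} \big)\Big\rangle_{X_{G,\Kl}^{\ge 1}(p)},
\]
where $a(\kappa_1,\kappa_2) = \tfrac{\kappa_1+\kappa_2-4-(r_1-r_2)}{2}$ is the linear function forced by the branching law in \cref{prop:branchingG}, and $\gamma^{(p)}$ determines how the families are pushed forward along $\iota_1$ at tame level. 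This expression is $p$-adic analytic in $(\kappa_1,\kappa_2)$ because $p$-depletion lands in the $U_p = 0$ subspace on which $\theta^{-1}$ converges, and the ordinary projector implicit in $\nu_{\Kl}$ is $p$-adically continuous on the input.

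Next, I would specialise at $(\kappa_1,\kappa_2) = (t_1+2, t_2+2)$. At this arithmetic point each $\cG_{i,\kappa_i}$ is the ordinary $p$-stabilisation of the fixed newform $g_i$ generating $\Sigma_i$, and its $p$-depletion agrees with the $p$-depletion of $g_i$ itself, i.e.~with the form underlying $\nu_i^{\dep}(w_i^{p,\mathrm{new}})$. Translating $\gamma^{(p)}\cdot\tau^p_{\mathrm{new}}$ into tame Whittaker data via the new-vector parametrisation of $\sW(\Sigma_{i,\f}^{\prime(p)})_L$, one reads off that the inputs to the two cup products coincide, and that $a(t_1+2, t_2+2) = r_2 - q$, so the exponent $1+a$ matches the $r_2 - q + 1$ appearing in the definition of $\cG$.

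The main obstacle is the normalisation bookkeeping: tracking the Gauss sums arising from the rational structure on Whittaker new-vectors (as in the definition of $\nu_{i,\dR}$), the volume factors from the two different pushforwards $\iota_{1,\star}$ (one on the Shimura-variety side as in \cref{prop:explicitZ}, one inside the coherent cup product defining $\operatorname{Per}_\nu$), and the compatibility between $p$-depletion and ordinary $p$-stabilisation, using that $\nu_{\Kl}$ is $U_{2,\Kl}'$-ordinary so that both choices pair identically against it. Once these constants are matched against the conventions of \cite{LZvista}, the identity is essentially tautological, with both sides equal to the same coherent cup product evaluated at the same classical test data.
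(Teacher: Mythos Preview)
The paper does not supply a proof of \cref{prop:periodformulaC}; it is stated as an immediate consequence of the construction of $\cL^{(f)}_{p,\nu,\gamma^{(p)}}(\Pi\times\ucG)$ in \cite{LZvista} (cf.~\cref{thm:padicLfcnA}), the point being that the coherent pairing defining $\operatorname{Per}_\nu$ is literally the specialisation of the defining pairing for the $p$-adic $L$-function at the arithmetic point $(\kappa_1,\kappa_2)=(t_1+2,t_2+2)$. Your proposal correctly identifies this: you unfold both sides to the same cup product on $X_{G,\Kl}^{\ge 1}(p)$, and your check that $a(t_1+2,t_2+2)=r_2-q$ (so that the exponent of $\theta^{-1}$ matches) is right. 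The normalisation bookkeeping you flag is genuine but routine, and is exactly the sort of compatibility implicit in the paper's assertion that the formula ``drops out naturally from the construction'' (as stated for cases (A) and (B) just above). So your approach is the intended one; there is simply no separate argument in the paper to compare against.
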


  \begin{remark}
   The set of values $(\kappa_1, \kappa_2) = (t_1+2, t_2+2)$ to which \cref{prop:periodformulaC} applies corresponds to region (e) in \cite[Figure 2]{LZvista}; in particular, it has empty intersection with the region (f) in which $\cL^{(f)}_{p,\nu,\gamma^{(p)}}(\Pi\times \ucG)$ interpolates the automorphic period.

   We can likewise extend the $p$-adic $L$-function of case (B) to allow $\Sigma_2$ to vary in a Hida family (with $\gamma^{(p)}$ now denoting a triple $(\gamma_0^{(p)}, \Phi_1^{(p)}, \gamma_2^{(p)})$).
  \end{remark}

%%%%%%%%%%%%%%%%%%%%%%%%%%%%%%%%%%%%%%%%%%%%%%%%%%%%%%%%%%%%%%%%%%%%%%%%%%%%%%%%

%\svnid{$Id: spin_syntomic.tex 1126 2021-06-25 18:36:41Z davidloeffler $}

\section{Syntomic and finite-polynomial cohomology}

 \subsection{Nekovar--Niziol fp-cohomology}\label{sect:NNfp}

  We briefly summarize the results from \cite[\S\S 7.2 - 7.5]{LZ20}.

  Let $X$ be any $\Qp$-variety, and let $n \in \ZZ$. One can define groups  $R\Gamma_{\NNfp}(X, n;P)$ and $R\Gamma_{\NNfp,c}(X, n;P)$ for any polynomial $P(t)\in\Zp[t]$ with constant coefficient $1$, with the case $P(t)=1-t$ recovering the syntomic theory of \cite{nekovarniziol}. In this case, we denote the cohomology groups by $R\Gamma_{\NNsyn}(X, n)$.

If $X$ is smooth of pure dimension $d$, then we have a canonical trace map
\begin{equation}\label{eq:NNtrace}
\tr_{\NNfp, X, P}:  H^{2d+1}_{\NNfp, c}(X, d+1, P) \to H^1_{\st, P}(\Qp, \Qp^{\mathrm{nr}}(1)) \to \Qp, \
\end{equation}
for any polynomial $P$ such that $P(1/p) \ne 0$.

   If $\mathfrak{G}$ is a reductive group and $U\subset \mathfrak{G}(\AA_f)$ open compact, write $X=Y_{\mathfrak{G}}(U)$ for the Shimura variety attached to $\mathfrak{G}$ of level $U$. We can define cohomology with coefficients in algebraic representations $V$. The cup product gives rise to a pairing
   \begin{equation}\label{eq:NNfppairing}
    \langle \quad,\quad\rangle_{\NNfp,X,P}: H^{i}_{\NNsyn}(X, \cV,n)\times H^{2d+1-i}_{\NNfp, c}(X, \cV^\vee,d+1-n, P)\rTo \Qp
   \end{equation}
   for any $n$ and any $P$ with $P(1/p) \ne 0$.

For cohomology with coefficients, the formalism of pushforward and pullback maps works as follows: suppose that we have a closed immersion of PEL Shimura varieties $\iota: Y_H(U')\hookrightarrow Y_G(U)$ of codimension $d$, for some reductive group $H$ and  $U'=U\cap H(\Af)$. Assume that the closed immersion extends to the toroidal compactifications.  Let $W$ be a direct summand of $V|_H$. We then obtain
\begin{align}
(\iota_U^{\cW})_*:\, H^\star_{\NNfp}(Y_H(U'),\cW,n;P) & \rTo H^{\star+2d}_{\NNfp}(Y_G(U),\cV,d+n;P), \label{eq:Wpushforward}  \\
(\iota_U^{\cW})^*: \, H^{\star}_{\NNfp,c}(Y_G(U),\cV,n;P) & \rTo  H^\star_{\NNfp,c}(Y_H(U'),\cW,n;P) \label{eq:Wpullback}
\end{align}
for all $r\in\ZZ$ which are adjoint under the pairing \eqref{eq:NNfppairing}.

 %%%%%%%%%%%%%%%%%%%%%%%%%%%%%%%%%%%%%%%

 \subsection{Comparison with log-rigid fp-cohomology}\label{sect:lrigfpcomp}

  Let $K$ be a finite extension of $\Qp$ with uniformizer $\pi$. Write $O_K^\pi$ for the scheme $\Spec O_K$ with the canonical log structure $1\mapsto \pi$. Let $(X,D)$ be a strictly semistable log scheme over $O_K^\pi$ with boundary, in the sense of \cite{ertlyamada18}, and write $U=X-D$. As explained in \cite[\S 9.]{LZ20}, one can define \emph{log-rigid fp-cohomology} (with or without compact support), denoted by $R\Gamma_{\lrigfp}(X\langle \pm D\rangle, n,\pi;P)$, for any $n\in\ZZ$. This cohomology theory is equipped with pullback and cup products, and it compares with NN-fp cohomology:

  \begin{proposition}
   If $X$ is proper, we have canonical isomorphisms
    	\begin{equation}\label{eq:NNlrigfp}
    	 R\Gamma_{\NNfp}(U_K,n;P)\cong R\Gamma_{\lrigfp}(X\langle  D\rangle, n,\pi;P)\quad \text{and}\quad R\Gamma_{\NNfp,c}(U_K,n;P)\cong R\Gamma_{\lrigfp}(X\langle - D\rangle, n,\pi;P).
  	    \end{equation}
  \end{proposition}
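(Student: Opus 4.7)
The plan is to reduce the statement to a known comparison between Beilinson--Hyodo--Kato cohomology (which underlies the NN construction of \cite{nekovarniziol}) and log-rigid cohomology (in the sense of Gro{\ss}e-Kl\"onne, \cite{ertlyamada18}), and then check that this comparison is compatible with both the Frobenius $\varphi$ and the Hodge filtration, so that passing to the $P(\varphi)$-mapping fibre on both sides gives the desired isomorphism.

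First, I would unwind the definition of each cohomology theory. Both $R\Gamma_{\NNfp}(U_K, n; P)$ and $R\Gamma_{\lrigfp}(X\langle D\rangle, n, \pi; P)$ are defined as mapping fibres of the shape
\[
\left[ R\Gamma_{\HK}(-)^{\varphi=?} \oplus \Fil^n R\Gamma_{\dR}(-) \ \longrightarrow\ R\Gamma_{\HK}(-) \oplus R\Gamma_{\dR}(-) \right],
\]
where on one side the first map is built from $(P(\varphi), \iota_{\HK})$ together with the inclusion of the filtered piece, and on the other side analogously but using the log-rigid avatar of Hyodo--Kato cohomology. Thus the whole construction is functorial in the pair (Hyodo--Kato-style cohomology, filtered de Rham cohomology) equipped with $\varphi$ and $N$.

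Next, I would invoke (or re-derive in our semistable setting) the comparison isomorphism between Beilinson's Hyodo--Kato cohomology $R\Gamma_{\HK}(U_K)$ and the log-rigid cohomology $R\Gamma_{\lrig}((X,D)_k)$ of the special fibre with its natural log structure. For proper strictly semistable $(X,D)$ over $O_K^\pi$ this is established by combining Beilinson's $h$-descent result with the agreement of log-crystalline and log-rigid cohomology on a proper log-smooth base; the de Rham side is compared via the Hyodo--Kato isomorphism, which lifts canonically to $K$ after a choice of uniformiser $\pi$. I would then verify that this comparison is $\varphi$-equivariant and strictly compatible with the Hodge filtration on de Rham cohomology, and likewise intertwines the two monodromy operators $N$. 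These compatibilities are essentially local on $(X,D)$ and follow from the compatibility of the two constructions with the standard local charts of a strictly semistable log scheme.

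Given these compatibilities, the $P(\varphi)$-mapping fibre functor produces canonically isomorphic complexes on the two sides, yielding the first isomorphism in \eqref{eq:NNlrigfp}. For the compactly supported version I would run the same argument using the boundary divisor $D$: on the NN side, compactly supported cohomology of $U_K$ is computed, as in \cite[\S 7]{LZ20}, as the mapping fibre of $R\Gamma \to R\Gamma(D)$ of a smooth compactification, while on the log-rigid side this is precisely the meaning of the notation $X\langle -D\rangle$ in \cite{ertlyamada18}. The duality/boundary compatibility needed here is the main technical obstacle: one has to check that the Beilinson and Ertl--Yamada conventions for ``cohomology supported away from $D$'' match up, including their $\varphi$- and filtration structures. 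Once this is verified, the same $P(\varphi)$-mapping fibre argument gives the second isomorphism.
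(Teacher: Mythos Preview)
The paper does not actually supply a proof of this proposition: it is stated as part of the summary of results from \cite[\S 9]{LZ20} (see the opening sentence of \S\ref{sect:lrigfpcomp}), so there is no ``paper's own proof'' to compare against. Your sketch is the right shape for how the result is established in \cite{LZ20}: both fp-theories are mapping fibres built from a Hyodo--Kato-type complex with $(\varphi,N)$-structure together with filtered de Rham cohomology, and the comparison reduces to identifying these ingredients compatibly. The substantive input is the comparison theorem of Ertl--Yamada \cite{ertlyamada18} between log-rigid and log-crystalline/Beilinson Hyodo--Kato cohomology for proper strictly semistable log schemes with boundary, together with its compatibility with Frobenius, monodromy, and the Hyodo--Kato map to de Rham cohomology; once that is in hand, applying the $P(\varphi)$-mapping-fibre construction on both sides gives the isomorphisms. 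Your identification of the compactly-supported case with the $\langle -D\rangle$ convention is also correct. So nothing is wrong with your outline, but you should be aware that in the present paper this is a black-box citation rather than something reproved here.
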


  We can also compare log-rigid fp cohomology to Besser's fp-cohomology of a smooth open subscheme $Z$ of $X$:

  \begin{proposition}\label{prop:extby0}
  	We have an extension-by-$0$ morphism
   \[ R\Gamma_{\rigfp,c}(Z\langle - D\rangle, n,\pi;P)\rTo R\Gamma_{\lrigfp}(X\langle - D\rangle, n,\pi;P)\]
   which is adjoint  to restriction to $Z$  under the cup product pairing.
  \end{proposition}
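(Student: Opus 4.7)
The plan is to build the extension-by-$0$ map directly at the level of the complexes used to define finite-polynomial cohomology, reducing the statement to the analogous (well known) property for the underlying log-rigid / rigid cohomology, and then to verify adjointness by tracing through the definition of the cup product.

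First, I recall that in the formalism of \cite{LZ20} (mirroring \cite{nekovarniziol} and Besser), each of the fp-cohomology groups in the statement is computed as the cohomology of a mapping fibre of the form $[\mathrm{RigComplex} \xrightarrow{P(\varphi)} \mathrm{RigComplex}]$, suitably combined with a filtered de Rham piece and Hyodo--Kato data. Concretely, $R\Gamma_{\lrigfp}(X\langle -D\rangle, n, \pi; P)$ is a fibre product built from the log-rigid compactly supported complex $R\Gamma_{\lrig}(X\langle -D\rangle)$ together with the Hyodo--Kato and filtered de Rham pieces of $(X,D)$; and $R\Gamma_{\rigfp, c}(Z\langle -D\rangle, n, \pi; P)$ is built analogously from Besser's compactly supported rigid complexes attached to the smooth pair $(Z, D\cap Z)$. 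The finite-polynomial twist $P$ enters only through the operator $P(\varphi)$ acting on these complexes and plays no role in constructing the map.

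Second, the key input is an extension-by-$0$ morphism at the level of underlying complexes. For compactly supported rigid cohomology, if $Z \subset X$ is an open smooth subscheme then there is a natural map
\[ R\Gamma_{\rig,c}(Z\langle -D\rangle) \longrightarrow R\Gamma_{\lrig}(X\langle -D\rangle), \]
coming from the fact that the overconvergent de Rham complex with compact supports on $Z$ (relative to the boundary along $D$) maps to the log-rigid complex of $X$ by pushing forward along the open immersion $Z \hookrightarrow X$. This map is $\varphi$-equivariant, preserves the Hyodo--Kato structure, and is compatible with the filtration on the de Rham part. I would first check this on each factor of the fibre-product description; this is where the strict semistability hypothesis on $X$ and the smoothness of $Z$ are used, in order to identify the rigid and log-rigid pieces with the corresponding Monsky--Washnitzer / overconvergent complexes. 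Once we have a $\varphi$-equivariant, filtered morphism between the underlying complexes, functoriality of the mapping fibre of $P(\varphi)$ immediately yields the desired map
\[ R\Gamma_{\rigfp,c}(Z\langle -D\rangle, n, \pi; P) \longrightarrow R\Gamma_{\lrigfp}(X\langle -D\rangle, n, \pi; P). \]

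Third, for the adjointness statement, the cup products on both sides are inherited from cup products on the underlying (log-)rigid complexes, fitted into the standard formalism of mapping fibres (see \cite[\S\S 7.3--7.4]{LZ20}). Adjointness of extension-by-$0$ and restriction for the underlying rigid / log-rigid complexes is classical; it reduces to the fact that for $\alpha \in R\Gamma_{\rig,c}(Z\langle -D\rangle)$ and $\beta \in R\Gamma_{\lrig}(X\langle D \rangle)$, the cup product $\alpha \cup \beta|_Z$ computed on $Z$ agrees with $(\text{ext-by-}0)(\alpha) \cup \beta$ computed on $X$ under the trace. Combining this with the compatibility of the fibre-of-$P(\varphi)$ construction with cup products gives the adjointness at the level of fp-cohomology.

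The main obstacle I expect is the bookkeeping in step two, namely constructing the extension-by-$0$ map as a morphism of the genuine complexes (not just on cohomology) and verifying that it respects all of the structure — Frobenius, monodromy, the Hyodo--Kato isomorphism, and the Hodge filtration — simultaneously. The cup-product adjointness in step three is then essentially formal once the underlying adjointness is in place, since the trace map \eqref{eq:NNtrace} is defined uniformly in terms of these complexes.
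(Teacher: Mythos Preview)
The paper does not actually give a proof of this proposition: \cref{sect:lrigfpcomp} is explicitly a summary of results from \cite[\S 9]{LZ20}, and \cref{prop:extby0} is simply stated without argument, as part of the recalled formalism. So there is no ``paper's own proof'' to compare against.

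That said, your sketch is the right shape for how one establishes such a statement, and is in line with the constructions in \cite{LZ20} that the paper is citing. The fp-complexes are mapping fibres built from (log-)rigid, Hyodo--Kato, and filtered de~Rham pieces; an extension-by-$0$ morphism on each of these ingredients, compatible with $\varphi$, $N$, and the filtration, induces the desired map by functoriality of the fibre; and adjointness with restriction descends from the corresponding adjointness on the underlying complexes together with the standard cup-product formalism for mapping fibres. The only caveat is that your ``main obstacle'' is genuinely the content here: making the extension-by-$0$ map a morphism of the full fp-datum (not just on cohomology), and in particular checking compatibility with the Hyodo--Kato comparison and with the chosen uniformizer $\pi$, is exactly the work carried out in \cite[\S 9]{LZ20} and in \cite{ertlyamada18}. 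Your write-up would be strengthened by pointing precisely to those constructions rather than treating this step as routine bookkeeping.
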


  \begin{remark}
   We define log-rigid fp-cohomology with coefficients in a filtered $F$-isocrystal $\sF$ using Liebermann's trick. This definition is compatible with NN fp-cohomology and Besser's log-rigid fp-cohomology  with coefficients under the maps above.
  \end{remark}

 We also need the notion of \emph{Gros rigid fp-cohomology} (with and without compact support), denoted  $\widetilde{R\Gamma}_{\rigfp}(Z\langle \pm D\rangle, n,\pi;P)$ and $\widetilde{R\Gamma}_{\rigfp,c}(Z\langle \mp D\rangle, n,\pi;P)$, respectively (see \cite[\S 9.2]{LZ20}). It is characterised by the fact that there are natural maps (specialisation, resp. cospecialisation)
 \begin{align*}
  \widetilde{R\Gamma}_{\rigfp}(Z\langle  \pm D\rangle, n,\pi;P) & \rTo R\Gamma_{\rigfp}(Z\langle \mp D\rangle, n,\pi;P)\\
  {R\Gamma}_{\rigfp,c}(Z\langle \mp D\rangle, n,\pi;P) & \rTo \widetilde{R\Gamma}_{\rigfp,c}(Z\langle \mp D\rangle, n,\pi;P)
 \end{align*}
  which are compatible with cup products.

 %%%%%%%%%%%%%%%%%%%%%%%%%%%%%%%%%%%%%%%

\subsection{Reduction to log-rigid fp-cohomology of $Y_G$}\label{sect:redtoGSp4}

   We now apply the formalism described in Section \ref{sect:NNfp}  to the representations $V$, $\hat{V}$ and $\tilde{V}$. As in Proposition \ref{prop:branchingG}, let $r_1\geq r_2\geq 0$, $t_1,t_2\geq 0$ such that $r_1+r_2\equiv t_1+t_2\pmod 2$,
   \[ |t_1-t_2|\leq r_1-r_2\quad \text{and}\quad r_1-r_2\leq t_1-t_2\leq r_1+r_2.\]
   Let $q=\frac{1}{2}(r_1+r_2-t_1-t_2)$.

   \begin{proposition}\label{prop:pushpullmaps}
   	We then have pushforward maps
   	\begin{align*}
   		\iota^{[t_1,t_2]}_{1,*}: H^2_{\NNsyn}\left(Y_{H,\Delta}, \Sym^{t_1}\sH^\vee_{\QQ_p}\boxtimes\Sym^{t_2}\sH^\vee_{\QQ_p},2\right)&\rTo H^4_{\NNsyn}\left({Y}_{G,\Kl},\cV_{\QQ}^\vee,3-q\right),\\
 		\hat\iota^{[t_1]}_{1,*}:H^1_{\NNsyn}\left(Y_{H,\Delta}, \Sym^{t_1}\sH^\vee_{\QQ_p}\boxtimes \mathbf{1},1\right)&\rTo H^5_{\NNsyn}\left(\hat{Y}_{\Kl},\hat\cV_{\QQ_p}^\vee,3-q-t_2\right),\\
 		\tilde\iota_{1,*}:  H^0_{\NNsyn}\left(Y_H,\Qp\right)&\rTo H^6_{\NNsyn}\left(\tilde{Y}_{\Kl},\tilde\cV_{\QQ_p}^\vee,3-q-t_1-t_2\right)
   	\end{align*}
   	and pullback maps
   	\begin{align*}
 		(\iota^{[t_1,t_2]}_1)^*: \, H^{\star}_{\NNsyn,c}(Y_{G,\Kl},\cV_{\Qp},d+q)  &\rTo  H^\star_{\NNsyn,c}(Y_{H,\Delta},\Sym^{t_1}\sH_{\Qp}\boxtimes\Sym^{t_2}\sH_{\Qp},d),\\
     	(\hiota^{[t_1]}_1)^*: \, H^{\star}_{\NNsyn,c}(\hat{Y}_{G,\Kl},\hat\cV_{\Qp},d+q+t_2) & \rTo  H^\star_{\NNsyn,c}(Y_{H,\Delta},\Sym^{t_1}\sH_{\Qp}\boxtimes\mathbf{1},d),\\
 		\tilde\iota_1^*: H^{\star}_{\NNsyn,c}\left(\tilde{Y}_{\Kl},\tilde\cV_{\Qp},d+q+t_1+t_2\right) &\rTo H^\star_{\NNsyn,c}\left(Y_H,\Qp,d\right).
 		\end{align*}
  \end{proposition}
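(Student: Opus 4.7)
The plan is to obtain these maps as direct applications of the general pushforward and pullback formalism for NN-syntomic cohomology recalled as \eqref{eq:Wpushforward}--\eqref{eq:Wpullback} in Section~\ref{sect:NNfp}. That formalism takes two pieces of input: a closed immersion of PEL Shimura varieties with matching levels extending to the toroidal compactifications, and a direct summand $\cW$ of the restriction of the target coefficient sheaf. I would supply both pieces of input in each of the three cases and then read off the stated maps.

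For the geometric input, the three embeddings $\iota, \hiota, \tilde\iota$ of Section~\ref{ss:conventions} are closed immersions of PEL data of codimensions $1$, $2$ and $3$ respectively, matching the degree shifts $2d = 2, 4, 6$ appearing in the proposition. The levels on the $H$-side — in particular $Y_{H,\Delta}$, whose $p$-component is $H(\Zp) \cap \gamma_p \Kl(p) \gamma_p^{-1}$ — are by construction of the form $H(\Af) \cap \gamma U \gamma^{-1}$ for the corresponding target level $U$ and the appropriate element $\gamma$ (trivial away from $p$, and $\gamma_p$ at $p$). The ``subscript~$1$'' versions $\iota_1, \hiota_1, \tilde\iota_1$ are the resulting closed immersions at these finite levels.

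For the coefficient input, I would invoke the multiplicity-one statement of \cref{prop:branchingG} to produce canonical $H$-equivariant splittings of the inclusions given there. In case (a) this identifies $\Sym^{t_1}\sH^\vee_{\Qp} \boxtimes \Sym^{t_2}\sH^\vee_{\Qp}$ as a canonical direct summand of $\iota^*(\cV^\vee_{\Qp} \otimes \mu^q)$, and similarly in cases (b) and (c) with $\mu^{q+t_2}$ and $\mu^{q+t_1+t_2}$. Translating the $\mu$-twist into a Tate twist via the similitude character (so that $\mu$ corresponds to $\Qp(-1)$ on the motivic side) produces the Tate shifts on the right-hand sides: the formal pushforward shift $d$ coming from \eqref{eq:Wpushforward} combines with the $\mu$-twist contributions $q$, $q+t_2$, $q+t_1+t_2$ to yield $3-q$, $3-q-t_2$, $3-q-t_1-t_2$ from the input twists $2, 1, 0$.

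Feeding this data into \eqref{eq:Wpushforward}--\eqref{eq:Wpullback} then produces the three pushforward maps and the three pullback maps asserted in the proposition; the two families are automatically adjoint under the cup product pairing \eqref{eq:NNfppairing}. The step I expect to be the main obstacle is the verification that the closed immersions extend to morphisms of toroidal compactifications for compatible choices of rational polyhedral cone decompositions, as the formalism of \cite{nekovarniziol} requires. For $\iota$ this is handled in \cite{LPSZ1}; the cases of $\hiota$ and $\tilde\iota$ should reduce to this by taking products with modular curves, although some care is needed to arrange that the product compactifications remain toroidal compactifications of the $\Gh$- and $\Gt$-Shimura varieties.
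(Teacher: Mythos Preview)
Your proposal is correct and matches the paper's intent: the proposition is stated there without proof, as a direct instance of the general pushforward/pullback formalism \eqref{eq:Wpushforward}--\eqref{eq:Wpullback} with the coefficient summands supplied by the branching laws of \cref{prop:branchingG}. Your bookkeeping of codimensions, degree shifts, and Tate twists is accurate, and your identification of the toroidal-compactification extension as the only non-formal point (handled via \cite{LPSZ1} and products) is appropriate.
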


  \begin{lemma}\label{lem:relationsiota}
  	For $i=1,2$, let $x_i\in H^1_{\NNsyn}(Y_{\GL_2,\Gamma_0}, \Sym^{t_i}\sH^\vee,1)$, and let $y\in H^3_{\NNsyn,c}(Y_{G,\Kl},\cV,1+q)$. Then
  	\begin{align}
  	  \iota^{[t_1,t_2]}_{1,*}(x_1\sqcup x_2)\cup y & = \hat\iota^{[t_1]}_{\Delta,*}\left(x_1\sqcup \mathbf{1}\right) \cup \left(y\sqcup x_2\right)\\
  	  & = \tilde\iota_{1,*}(\mathbf{1}\sqcup \mathbf{1})\cup \left(y\sqcup x_1\sqcup x_2\right),
  	\end{align}
  	where we use the identification $\Sym^{t}\sH(-t)\cong \Sym^{t}\sH^\vee$.
  \end{lemma}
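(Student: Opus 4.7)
The plan is to reduce each of the three expressions to the same cup product on $Y_H$, using the adjointness of the pushforward and pullback maps in \cref{prop:pushpullmaps} under the NN-fp pairing \eqref{eq:NNfppairing}. Since the three cup products live in the top-degree compactly supported cohomology of $Y_G$, $\hat{Y}$ and $\tilde{Y}$ respectively (whose degrees are $7$, $9$, and $11$), the stated equalities must be interpreted as equalities of numbers in $\Qp$ via the respective trace maps \eqref{eq:NNtrace}, which is indeed the only way they can be compared and also the only meaning used in the proof of \cref{thm:main}.

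First I apply the projection formula -- a formal consequence of the adjointness between \eqref{eq:Wpushforward} and \eqref{eq:Wpullback} -- to rewrite each expression as a pushforward from $Y_H$:
\begin{align*}
 \iota^{[t_1,t_2]}_{1,*}(x_1 \sqcup x_2) \cup y &= \iota^{[t_1,t_2]}_{1,*}\bigl((x_1 \sqcup x_2) \cup \iota_1^*(y)\bigr), \\
 \hat\iota^{[t_1]}_{\Delta,*}(x_1 \sqcup \mathbf{1}) \cup (y \sqcup x_2) &= \hat\iota^{[t_1]}_{\Delta,*}\bigl((x_1 \sqcup \mathbf{1}) \cup \hat\iota_1^*(y \sqcup x_2)\bigr), \\
 \tilde\iota_{1,*}(\mathbf{1} \sqcup \mathbf{1}) \cup (y \sqcup x_1 \sqcup x_2) &= \tilde\iota_{1,*}\bigl(\tilde\iota_1^*(y \sqcup x_1 \sqcup x_2)\bigr).
\end{align*}
Next I use functoriality of pullback to identify the three arguments on $Y_H$. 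Let $p_1,p_2 \colon Y_H \to Y_{\GL_2}$ denote the two natural projections coming from $H = \GL_2 \times_{\GL_1} \GL_2$. Since the composition of $\hat\iota_1$ with the first projection $\hat{Y} \to Y_G$ is $\iota_1$ and with the second projection $\hat{Y} \to Y_{\GL_2}$ is $p_2$, functoriality gives $\hat\iota_1^*(y \sqcup x_2) = \iota_1^*(y) \cup p_2^*(x_2)$; analogously, $\tilde\iota_1^*(y \sqcup x_1 \sqcup x_2) = \iota_1^*(y) \cup p_1^*(x_1) \cup p_2^*(x_2)$. Combining this with the definitions $x_1 \sqcup \mathbf{1} = p_1^*(x_1)$ and $x_1 \sqcup x_2 = p_1^*(x_1) \cup p_2^*(x_2)$, all three classes inside the pushforward symbols coincide (modulo signs from graded commutativity which can be absorbed into a consistent ordering convention for $\sqcup$ and $\boxtimes$) with the single class $\iota_1^*(y) \cup p_1^*(x_1) \cup p_2^*(x_2) \in H^5_{\NNsyn,c}(Y_H, \dots)$.

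Finally I invoke the compatibility $\operatorname{tr}_{Y_G} \circ \iota_{1,*} = \operatorname{tr}_{Y_H}$, and likewise for $\hat\iota_1$ and $\tilde\iota_1$, which is automatic for proper (in particular closed) morphisms of smooth varieties and is built into the trace formalism of \cite[\S 7]{LZ20}. Thus after applying the appropriate trace map, all three expressions yield the same number $\operatorname{tr}_{Y_H}\bigl(\iota_1^*(y) \cup (x_1 \sqcup x_2)\bigr)$, giving the asserted equalities.

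The main obstacle will be bookkeeping rather than mathematics: one must match up the twists of the coefficient sheaves $\cV^\vee$, $\hat\cV^\vee$, $\tilde\cV^\vee$ and the corresponding powers of $\mu$ under pullback along $\iota$, $\hat\iota$, $\tilde\iota$, using the branching data of \cref{prop:branchingG} (which is how the invariant line ``$\mathbf{1}$'' and the subrepresentations $\Sym^{t_i}\sH^\vee$ are selected inside the pullbacks), and track signs from graded commutativity so that the external cup products $\boxtimes$ on the product Shimura varieties and the internal cup products $\sqcup$ on $Y_H$ are compatibly normalised. With these conventions fixed as in \cite[\S 7]{LZ20}, the three steps above combine to give the claimed identity on the nose.
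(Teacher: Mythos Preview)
Your proposal is correct and matches the paper's approach: the paper's proof reads in its entirety ``Simple check'', and what you have written is precisely the routine unpacking of that check via the projection formula, functoriality of pullback along the product decompositions $\hat\iota=(\iota,p_2)$ and $\tilde\iota=(\iota,p_1,p_2)$, and compatibility of trace maps with proper pushforward. Your observation that the three equalities must be read as equalities in $\Qp$ after applying the respective trace maps is exactly the intended meaning, as confirmed by the use of the lemma in \cref{prop:likeGSp4}.
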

  \begin{proof}
  	Simple check.
  \end{proof}

  Write $\hat{D}$ for the boundary divisor $\hat{X}_{\Kl,\QQ}-\hat{Y}_{\Kl,\QQ}$.

 \begin{note}
 	We can regard $\eta_{\dR}$ as an element $\eta_{\dR,q}\in \Fil^{1+q}\DdR(V_p(\Pi))$.
 \end{note}

   \begin{lemma}\label{lem:etaNNfplift}
    Let $P_{q}(T)=P(p^{1+q}T)$. Then there exists a unique lift $\eta_{\NNfp,q,-D}$ of  $\eta_{\dR,q}$ to the group $H^3_{\NNfp}(X_{\Kl},\cV(-D),1+q;P_{q})[\Pi_f']$.
   \end{lemma}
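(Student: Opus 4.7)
The plan is to apply the fundamental long exact sequence for NN-fp cohomology recalled in \cite[\S 7]{LZ20}. For the coefficient system $\cV(-D)$, twist $n = 1+q$ and polynomial $P_q$ it takes the schematic form
\[
 \cdots \to H^{2}_{\HK} \oplus \frac{H^{2}_{\dR}}{\Fil^{1+q}} \to H^3_{\NNfp}\bigl(X_{\Kl}, \cV(-D), 1+q; P_q\bigr) \to \Fil^{1+q} H^3_{\dR}(X_{\Kl}, \cV(-D)) \xrightarrow{\ \alpha\ } H^{3}_{\HK} \to \cdots,
\]
where the coefficients $\cV(-D)$ are suppressed throughout and $\alpha$ is essentially $P_q(\varphi)$ composed with the Hyodo--Kato comparison. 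Thus lifting $\eta_{\dR,q}$ to NN-fp cohomology amounts to showing $\alpha(\eta_{\dR,q}) = 0$ on the $\Pi_f'$-isotypic part, and the lift is unique provided the $\Pi_f'$-isotypic components of the two preceding $H^2$ terms vanish.

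For existence, I would restrict to the $\Pi_f'$-isotypic subspace. Since $\Pi$ is non-endoscopic, non-CAP and cuspidal, the Hyodo--Kato comparison identifies this subspace of $H^3_{\HK}(X_{\Kl}, \cV(-D))_L$ with a sum of copies of $\Dcris$ of the Galois representation $V_p(\Pi)$, on which Frobenius has characteristic polynomial encoded by $P(T)$. The Tate twist by $(1+q)$ rescales $\varphi$ by $p^{-(1+q)}$, and the polynomial $P_q(T) = P(p^{1+q} T)$ is designed precisely so that $P_q$ evaluated at the rescaled Frobenius vanishes identically. Hence $\alpha(\eta_{\dR,q}) = 0$ and a lift exists.

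For uniqueness, I would invoke the vanishing of the $\Pi_f'$-isotypic components of $H^i(X_{\Kl}, \cV(-D))$ for $i \ne 3$, in both de Rham and Hyodo--Kato realisations. This is a standard consequence of the fact that a cuspidal non-endoscopic Siegel automorphic representation contributes to the interior cohomology of the Siegel threefold only in the middle degree, combined with the boundary/interior exact sequence and the cuspidality of $\Pi$ (which makes the boundary contribute trivially on the $\Pi_f'$-isotypic part). This vanishing is the direct analogue in coherent/rigid cohomology of the statement used in the construction of $V_p(\Pi)$ in \cite[\S 10]{LSZ17}, and follows from it via the crystalline and de Rham comparison isomorphisms.

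The main obstacle is the precise matching of Frobenius eigenvalues: verifying that, with the normalisations of \S\ref{sect:heckeparams}, the characteristic polynomial of the crystalline Frobenius on the Hyodo--Kato realisation of the $\Pi_f'$-isotypic part of $H^3(X_{\Kl}, \cV(-D))(1+q)$ is exactly $P_q$. This is a compatibility check between the \'etale normalisation of $P$, the Tate twist convention, and the Hyodo--Kato--\'etale comparison; essential, because without it the obstruction need not vanish. The Klingen parahoric level is harmless here, as the Hecke action at $p$ plays no role in cutting out the $\Pi_f'$-isotypic part from the $\Af^p$-smooth cohomology.
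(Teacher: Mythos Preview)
Your approach via the long exact sequence for NN-fp cohomology is the intended one (the paper states the lemma without proof, deferring to the formalism of \cite[\S 7]{LZ20}), and your uniqueness argument via vanishing of the $\Pi_f'$-isotypic part in degree $2$ is correct.

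There is, however, a real slip in the existence step. You assert that Frobenius on the $\Pi_f'$-isotypic part of $H^3_{\HK}$ ``has characteristic polynomial encoded by $P(T)$'' and hence $P_q(\varphi)$ vanishes identically there. But in the paper's notation $P$ is the \emph{degree-two} polynomial $(1-T/\alpha)(1-T/\beta)$ (see the passage after \cref{prop:red2}, where $P(x)=1+c_1x+c_2x^2$ with $c_2=(\alpha\beta)^{-1}$), not the full degree-four polynomial $P_p$. So $P_q(\varphi)$ does \emph{not} annihilate the entire $\Pi_f'$-isotypic Hyodo--Kato space, and your argument as written does not establish that the obstruction $\alpha(\eta_{\dR,q})$ vanishes. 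The correct reason is specific to the class: $\eta_{\dR}$ is by construction the lift of $\nu$ lying in $\Fil^1\DdR(V_p(\Pi))\cap\Dcris(V_p(\Pi))^{(\varphi-\alpha)(\varphi-\beta)=0}$ (see the definition of $\nu_{\dR}$ before \cref{def:regulators} and the remark after \cref{prop:weprove}), so $P(\varphi)\cdot\eta_{\dR}=0$, and after twisting $P_q(\varphi)\cdot\eta_{\dR,q}=0$. Your ``main obstacle'' paragraph is thus aimed at the wrong target: what matters is not a global matching of characteristic polynomials but the particular $\varphi$-eigenspace in which $\eta_{\dR}$ was chosen to live.
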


  \begin{definition}\label{def:ginNNfp}
   For $i=1,2$ in case (C), and $i = 2$ in case (B), write $\mu_{i,\NNfp}$ for the unique $\Sif'$-equivariant lift of $\mu_{i,\dR}$ to \[H^1_{\NNfp}(Y_{\GL_2,\Gamma_0},\Sym^{t_i}\sH,1+t_i;Q_{i,t_i}),\] where $Q_i(T)=\left(1-\frac{T}{\fa_i}\right)\left(1-\frac{T}{\fb_i}\right)$.
  \end{definition}

  We now introduce syntomic Eisenstein classes for case (B). As in Section \ref{sect:periods}, let $(w,\Phi)$ be the product of  some arbitrary test data $(w^p, \Phi^p)$ away from $p$ and the Klingen test data at $p$. Shrinking $K^p$ if necessary, we may assume that $K^p$ fixes $w^p$, and $K_H^p$ fixes $\Phi^p$.
   \begin{notation}
    For $t\geq 0$, write $\Eis^{[t]}_{\syn,\Phi}$ for the image of $\Eis^{[t]}_{\mot,\Phi}$ under $r_{\NNsyn}$.
   \end{notation}

  \begin{proposition}\label{prop:likeGSp4}\
  	\begin{enumerate}
  		\item In case (B), we have
         \begin{align}
         & \left\langle \left(\log \circ \pr_{\hat\Pi} \circ \hiota^{[t_1]}_{i, \star}\right)(\Eis^{[t_1]}_{\et, \Phi^p \Phi_{\crit}}\sqcup \mathbf{1}),\eta_{\dR}\sqcup\mu_{2,\dR}\right\rangle_{\hY_{\Kl}}\notag\\
         &\qquad \qquad =\left\langle \Eis^{[t_1]}_{\syn, \Phi^p \Phi_{\crit}}\sqcup \mu_{2,\NNfp},\,  (\iota_1^{[t_1,t_2]})^*\left(\eta_{\NNfp,q, -D}\right)\right\rangle_{\NNfp,Y_{\Kl}},\label{eq:hatcase}
         \end{align}
        \item In case (C), we have
        \begin{align}
        & \left\langle \left(\log \circ \pr_{\tilde\Pi_f^{\prime \vee}} \circ \tilde\iota_{1, \star}\right)(\mathbf{1}\sqcup \mathbf{1}),\eta_{\dR}\sqcup\mu_{1,\dR}\sqcup\mu_{2,\dR}\right\rangle_{\hY_{\Kl}}\notag \\
        &\qquad \qquad =\left\langle\mu_{1,\NNfp}\sqcup \mu_{2,\NNfp},\,  (\iota_1^{[t_1,t_2]})^*\left(\eta_{\NNfp,q, -D}\right)\right\rangle_{\NNfp,Y_{\Kl}}.\label{eq:tildecase}
        \end{align}
   \end{enumerate}
 \end{proposition}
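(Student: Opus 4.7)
The plan is to reduce both identities to an NN-fp cup-product computation on the $\GSp_4$ Shimura variety $Y_{G,\Kl}$, by combining three ingredients: the Nekov\'a\v{r}--Niziol compatibility between the Bloch--Kato logarithm and the NN-fp cup product \eqref{eq:NNfppairing}; the identity of \cref{lem:relationsiota}, which matches the three pushforwards $\iota^{[t_1,t_2]}_{1,*}$, $\hat\iota^{[t_1]}_{1,*}$, $\tilde\iota_{1,*}$; and adjointness of pushforward and pullback, \eqref{eq:Wpushforward}--\eqref{eq:Wpullback}. The argument is entirely parallel to the $\GSp_4$-only case of \cite[\S 7]{LZ20}.

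For \eqref{eq:hatcase}, I first combine the lifts $\eta_{\NNfp,q,-D}$ (\cref{lem:etaNNfplift}) and $\mu_{2,\NNfp}$ (\cref{def:ginNNfp}) into a single NN-fp class on $\hat{Y}_{\Kl}$ for the product polynomial $P_q \cdot Q_{2,t_2}$. Since the $p$-localisation of the \'etale class $\hat\iota^{[t_1]}_{1,\star}(\Eis^{[t_1]}_{\et,\Phi^p\Phi_{\crit}} \sqcup \mathbf{1})$ lies in $H^1_{\mathrm{e}}$ by \cite[Theorem B]{nekovarniziol}, the Bloch--Kato log pairing on the LHS equals the NN-fp cup product of the corresponding NN-syntomic class against $\eta_{\NNfp,q,-D} \sqcup \mu_{2,\NNfp}$ over $\hat{Y}_{\Kl}$. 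The NN-syntomic/NN-fp version of \cref{lem:relationsiota} then rewrites this as a cup product over $Y_{G,\Kl}$ between $\iota^{[t_1,t_2]}_{1,*}(\Eis^{[t_1]}_{\syn,\Phi^p\Phi_{\crit}} \sqcup \mu_{2,\NNfp})$ and $\eta_{\NNfp,q,-D}$, and adjointness moves the pushforward to the pullback $(\iota^{[t_1,t_2]}_1)^*(\eta_{\NNfp,q,-D})$, producing the RHS.

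Identity \eqref{eq:tildecase} is handled in exactly the same way, with $\mu_{1,\NNfp}$ taking the role of $\Eis^{[t_1]}_{\syn}$. The lift of $\eta_{\dR} \sqcup \mu_{1,\dR} \sqcup \mu_{2,\dR}$ uses the polynomial $P_q \cdot Q_{1,t_1} \cdot Q_{2,t_2}$, and \cref{lem:relationsiota} transports $\tilde\iota_{1,*}(\mathbf{1} \sqcup \mathbf{1})$ over $\tilde{Y}_{\Kl}$ into $\iota^{[t_1,t_2]}_{1,*}(\mu_{1,\NNfp} \sqcup \mu_{2,\NNfp})$ over $Y_{G,\Kl}$, after which adjointness yields the RHS.

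The main obstacle is the first step: verifying that the combined polynomial $P_q \cdot \prod_i Q_{i,t_i}$ satisfies $P(1/p) \ne 0$, so that the NN-fp trace \eqref{eq:NNtrace} is defined and the Bloch--Kato logarithm can be expressed via the NN-fp cup product. The roots of $P_q$ are controlled by Klingen-ordinarity of $\Pi$ and those of $Q_{i,t_i}$ by the Hecke parameters $\fa_i, \fb_i$ of $\Sigma_i$; inspection of the pairwise products shows that the combined polynomial avoids $1/p$ precisely outside the exceptional case-B situation, which is excluded by our standing assumption. Once this polynomial compatibility is in place, the remaining manipulations are formal: adjointness and \cref{lem:relationsiota} transport identically from NN-syntomic to NN-fp cohomology, as the proof of the latter is a pure functoriality argument about pushforwards.
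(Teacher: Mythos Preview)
Your proposal is correct and follows essentially the same route as the paper's proof: invoke the Abel--Jacobi / Nekov\'a\v{r}--Nizio{\l} formalism (cf.\ \cite[\S 7.3]{LZ20}) to rewrite the Bloch--Kato logarithm as an NN-fp cup product against $\eta_{\NNfp,q,-D}\sqcup\mu_{i,\NNfp}$, then apply \cref{lem:relationsiota} together with pushforward--pullback adjunction to arrive at the pairing on $Y_{H,\Delta}$. Your additional paragraph verifying the polynomial condition $P(1/p)\neq 0$ is a detail the paper absorbs into the citation of the general formalism, but it is correctly identified with the exclusion of the exceptional case-B situation.
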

   \begin{proof}
  	 We prove (1). It is clear from the general formalism of Abel--Jacobi maps (c.f. \cite[\S 7.3]{LZ20}) that
  	 \begin{align*}
  	  &\left\langle \left(\log \circ \pr_{\hat\Pi_f^{\prime \vee}} \circ \hiota^{[t_1]}_{1, \star}\right)(\Eis^{[t_1]}_{\et, \Phi^p \Phi_{\crit}}\sqcup \mathbf{1}),\eta_{\dR}\sqcup\mu_{2,\dR}\right\rangle_{\hY_{\Kl}}\\
   	  & \qquad\qquad =  	 \left\langle (\hiota^{[t_1]}_1)_*( \Eis^{[t_1]}_{\syn, \Phi^p \Phi_{\crit}}\sqcup\mathbf{1}),\, \eta_{\NNfp,q, -D}\sqcup  \mu_{2,\NNfp}\right\rangle.
  	 \end{align*}
  	 The result now follows by applying Lemma \ref{lem:relationsiota} and the adjunction between pushforward and pullback.

  	 (2) can be proved analogously.
   \end{proof}

  Note that for $i = 1$ in case (B), the Eisenstein class lies in syntomic cohomology, which is fp-cohomology with polynomial $1 - T = 1 - p^{t_1 + 1} T / \fa_1$; we write $\Eis^{[t]}_{\fp,\Phi}$ for its image under the ``change of polynomial'' map in fp-cohomology with the quadratic polynomial $Q_{i, t_i} = (1 - p^{t_1 + 1} T / \fa_1)( 1 - p^{t_1 + 1} T / \fb_1)$ instead.
  \begin{notation}
  	In order to evaluate \eqref{eq:hatcase} and \eqref{eq:tildecase} simultaneously, we let
  	\[ x^{(1)}_{\NNfp}= \begin{cases} \Eis^{[t_1]}_{\fp, \Phi^p \Phi_{\crit}} &\quad \text{Case (B)}\\\mu^{(1)}_{\NNfp} & \quad \text{Case (C)}\end{cases}\]
  	 and  $x^{(2)}_{\NNfp}=\mu^{(2)}_{\NNfp}$. %Also, define polynomials
  	%\[ R_1(T)= \begin{cases} 1-T&\quad \text{Case (B)}\\ Q_1(T) & \quad \text{Case (C)}\end{cases}\]
    % and $R_2(T)=Q_2(T)$.
  \end{notation}

  Using the isomorphisms \eqref{eq:NNlrigfp} between NN-fp and lrig-fp cohomology, we obtain the following:

  \begin{lemma}\label{lem:redtolrigfp}
   We have
   \begin{align*}
    &\left\langle x^{(1)}_{\NNfp}\sqcup x^{(2)}_{\NNfp},\,  (\iota_{\Delta}^{[t_1,t_2]})^*(\eta_{\NNfp,q, -D})\right\rangle_{\NNfp,Y_{H,\Delta}}\\
    &\qquad =    \left\langle x^{(1)}_{\lrigfp}\sqcup x^{(2)}_{\lrigfp},\,  (\iota_{\Delta}^{[t_1,t_2]})^*(\eta_{\lrigfp,q, -D})\right\rangle_{\lrigfp,Y_{H,\Delta}}.
   \end{align*}
  \end{lemma}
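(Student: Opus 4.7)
The plan is to deduce this purely formally from the comparison isomorphisms of \cref{sect:lrigfpcomp}, which identify NN-fp cohomology and log-rigid fp cohomology of a proper strictly semistable log scheme with boundary. The entire content of the claim is that this identification is compatible with the two operations appearing in the pairing, namely cup product and pullback along $\iota_\Delta$, as well as with the trace map defining $\langle-,-\rangle$. The classes $x^{(i)}_{\lrigfp}$ and $\eta_{\lrigfp,q,-D}$ are \emph{defined} as the images of $x^{(i)}_{\NNfp}$ and $\eta_{\NNfp,q,-D}$ under the isomorphism
\[
  R\Gamma_{\NNfp,c}(U_K,n;P) \cong R\Gamma_{\lrigfp}(X\langle -D\rangle, n,\pi;P),
\]
extended to coefficients in filtered $F$-isocrystals via Lieberman's trick (as noted in the \emph{remark} at the end of \cref{sect:lrigfpcomp}); so once compatibility of the structure is established, the equality of pairings is automatic.

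First I would choose a toroidal compactification $\hat X_H$ of $Y_{H,\Delta}$ extending the given toroidal compactifications of $Y_{G,\Kl}$, so that $\iota_\Delta$ is a closed immersion of strictly semistable log schemes with boundary, to which the Lieberman construction applies. Both the NN-fp theory (by the formalism of pushforward/pullback summarised in \eqref{eq:Wpushforward}--\eqref{eq:Wpullback}) and the log-rigid fp theory admit functorial pullback along such closed immersions, and the comparison isomorphism is contravariantly functorial for open immersions and more generally for the morphisms of log-schemes involved; hence
\[
  (\iota_\Delta^{[t_1,t_2]})^*\big(\eta_{\NNfp,q,-D}\big)
  \;\longleftrightarrow\;
  (\iota_\Delta^{[t_1,t_2]})^*\big(\eta_{\lrigfp,q,-D}\big)
\]
under the comparison. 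Second, the comparison isomorphism is a morphism of cup-product structures: this is built into the constructions of both theories (both arise from appropriate mapping fibres involving the same underlying filtered $(\varphi,N)$-module data), so $x^{(1)}_{\NNfp}\sqcup x^{(2)}_{\NNfp}$ corresponds to $x^{(1)}_{\lrigfp}\sqcup x^{(2)}_{\lrigfp}$.

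The final, and only non-formal, point is that the trace maps defining the two pairings agree. The NN-fp trace \eqref{eq:NNtrace} factors through $H^1_{\st,P}(\Qp,\Qp^{\mathrm{nr}}(1))$, and the log-rigid fp trace is constructed precisely so that under the comparison isomorphism \eqref{eq:NNlrigfp} the two traces coincide (this is exactly how one checks that the comparison is ``pairing-preserving'' in \cite[\S 9]{LZ20}). Composing these three compatibilities --- pullback, cup product, trace --- identifies the two pairings term-by-term and yields the stated equality. I expect the main point of care is the first step, verifying that $\iota_\Delta$ is adapted to the log-geometric setup so that one can legitimately apply the comparison on both sides of the pullback; once that is in place the remainder is a purely formal consequence of the general formalism recalled in \cref{sect:NNfp} and \cref{sect:lrigfpcomp}.
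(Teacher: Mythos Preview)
Your proposal is correct and follows the same approach as the paper: the lemma is stated as an immediate consequence of the comparison isomorphisms \eqref{eq:NNlrigfp}, and the paper gives no proof beyond the sentence ``Using the isomorphisms \eqref{eq:NNlrigfp} between NN-fp and lrig-fp cohomology, we obtain the following''. Your write-up usefully spells out the three compatibilities (pullback, cup product, trace) that are implicitly being invoked, which is exactly what underlies the one-line justification in the paper.
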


%%%%%%%%%%%%%%%%%%%%%%%%%%%%%%%%%
%%%%%%%%%%%%%%%%%%%%%%%%%%%%%%%%%

\section{Comparison with the regulator evaluation for $\GSp(4)$}

 \subsection{Reduction to Gros fp-cohomology}

  The formula for the regulator in Lemma \ref{lem:redtolrigfp} resembles closely that of the 1st reduction step in the evaluation of the regulator for the $\GSp(4)$ Euler system: in \cite[Proposition]{LZ20}, the regulator is expressed as the pairing
  \begin{equation}\label{eq:redtoGrosGSp4}
      \left\langle \Eis^{[t_1]}_{\lrigsyn,\Phi_1}\sqcup \Eis^{[t_2]}_{\lrigsyn,\Phi_2}\,  (\iota_{\Delta}^{[t_1,t_2]})^*\left(\eta_{\lrigfp,q, -D}\right)\right\rangle_{\lrigfp,Y_{H,\Delta}},
  \end{equation}
  so all we have done is to replace the Eisenstein classes by $x^{(i)}_{\lrigfp}$. In \emph{op. cit.}, we reduce \eqref{eq:redtoGrosGSp4} to a pairing in Gros fp-cohomology over $Y^{\ord}_{H,\Delta}$. This reduction is independent of whether we pair against Eisenstein or cuspidal classes, so by the same argument, we obtain the analogous formula for our regulator. In order to state it, we need to recall some definitions.

  \begin{definition}
	Let $Y_{G,\Kl}$ be the canonical $\Zp$-model of the Siegel $3$-fold of level $K^{p} \times \Kl(p)$, for some (sufficiently small) tame level $K^p$. Let $X_{G,\Kl}$ be a toroidal compactification of $Y_{\Kl}$ (for some suitably chosen cone-decomposition $\Sigma$), and $X_{\Kl}^{\min}$ the minimal compactification. Write $D$ for the boundary divisor of the toroidal compactification. Write $Y_{G,\Kl,0}$ (resp. $X_{G,\Kl,0}$) for the special fibre of $Y_{G,\Kl}$ (resp. $X_{G,\Kl}$).

	We similarly write $Y_G$ for the canonical $\Zp$-model of the Siegel $3$-fold of level $K^{p} \times G(\Zp)$, and $X_G$ for its toroidal compactification.
 \end{definition}

 For $r\in\{0,1,2\}$, we have subschemes $X_{G,\Kl}^{\geq r}$ and $X_{G,\Kl}^{\leq r}$ of $X_{G,\Kl,0}$.

  \begin{notation}
	If $\cE$ is a coherent sheaf on $\cX^{\ge 1}_{\Kl}$,  write $R\Gamma_{c0}\left(\cX_{G,\Kl}^{\ord}, \cE\right)$ for the cohomology of $\cX_{G,\Kl}^{\ord}$ with coefficients in $\cE$, compactly supported away from $\cX_{G,\Kl}^{=1}$.
  \end{notation}

  \begin{definition}
	Let $X_{H,\Delta}$ be the covering of $X_H$ parametrising choices of finite flat $\Zp$-subgroup-scheme $C \subset (E_1 \oplus E_2)[p]$ of order $p$ which project nontrivially into both $E_i$.
\end{definition}

\begin{lemma}
	There is a finite map $\iota_1:X_{H, \Delta} \to X_{ \Kl}$ lying over the map $X_H \to X_G$, and the preimage of $X_{G,\Kl}^{\ge 1}$ is the open subscheme $X_{H, \Delta}^{\ord}$ where $C$ is multiplicative.
\end{lemma}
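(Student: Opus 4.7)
The plan is in three stages: (1) define $\iota_1$ via moduli on the open Shimura varieties and extend to the toroidal compactifications, (2) verify finiteness by factoring through a natural fibre product, and (3) identify the preimage of $X^{\ge 1}_{G,\Kl}$ by analysing the multiplicative part of the $p$-torsion of a product of elliptic curves.

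First, I would construct $\iota_1$ on the open parts using the moduli interpretation. An $S$-valued point of $Y_{H,\Delta}$ consists of a pair $(E_1,E_2)$ of elliptic curves (with matching polarisation/level data encoded by the identification $H = \GL_2 \times_{\GL_1} \GL_2$), together with a finite flat order-$p$ subgroup $C \subset (E_1 \oplus E_2)[p]$ projecting nontrivially into each factor. The standard closed immersion $Y_H \hookrightarrow Y_G$ sends $(E_1,E_2)$ to the abelian surface $A = E_1 \times E_2$ with the symplectic form coming from the Weil pairings on the $E_i$ (compatibility of the two Weil pairings is exactly what the fibre-product structure of $H$ encodes). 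Any order-$p$ subgroup of $A[p]$ is automatically totally isotropic, so $(A,C)$ is an $S$-valued point of $Y_\Kl$. Extension to the toroidal compactifications is then a matter of choosing the cone decomposition on $X_\Kl$ to be adapted to that on $X_{H,\Delta}$, as in the standard construction recalled in \cite[\S 4.1]{LPSZ1}.

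Second, I would verify finiteness by factoring
\[
  X_{H,\Delta} \longrightarrow X_H \times_{X_G} X_\Kl \longrightarrow X_\Kl.
\]
The second arrow is finite because the forgetful map $X_\Kl \to X_G$ (dropping the choice of order-$p$ line) is finite. The first arrow is a closed immersion: points of $X_H \times_{X_G} X_\Kl$ parametrise pairs $(E_1,E_2)$ together with an arbitrary finite flat order-$p$ subgroup $C \subset (E_1 \oplus E_2)[p]$, and $X_{H,\Delta}$ is cut out by the closed condition that $C$ projects nontrivially to each $E_i[p]$. Composing with the finite map $X_H \to X_G$ then yields finiteness of $\iota_1$.

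Finally, to identify $\iota_1^{-1}(X_{G,\Kl}^{\ge 1})$: by the description recalled in \cite[\S 4.1]{LPSZ1}, $X_{G,\Kl}^{\ge 1}$ is the locus on which the universal Klingen subgroup is of multiplicative type. Pulling back along $\iota_1$, the requirement becomes that $C \subset E_1[p] \oplus E_2[p]$ be multiplicative. Since $C$ has order $p$ and projects nontrivially to each $E_i[p]$, each projection is a closed immersion of $C$ into $E_i[p]$; hence $C$ is multiplicative if and only if both projections land in $E_i[p]^{\mult}$. This forces both $E_i$ to be ordinary and the projections $C \to E_i[p]^{\mult}$ to be isomorphisms, which is precisely the open subscheme labelled $X_{H,\Delta}^{\ord}$ (the multiplicative locus in the $\Delta$-covering). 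The main technical nuisance I expect is just arranging cone decompositions on the four toroidal compactifications simultaneously compatibly with the closed immersion $X_H \to X_G$ and the parahoric forgetful map $X_\Kl \to X_G$, so that $\iota_1$ extends and remains finite on the boundary; everything else is routine moduli-theoretic bookkeeping.
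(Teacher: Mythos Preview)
The paper's own proof is simply a citation to \cite[\S 4.1]{LPSZ1}, so your proposal supplies substantially more detail than the paper does. Your moduli-theoretic outline is essentially the content of that reference, and the overall strategy (define $\iota_1$ on moduli, extend to compactifications, compute the preimage fibrewise) is correct.

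There is one slip in step (2): the condition that $C$ project nontrivially to each $E_i[p]$ is \emph{open}, not closed, in the full moduli of order-$p$ subgroups of $(E_1\oplus E_2)[p]$ (the complementary loci $C\subset 0\times E_2[p]$ and $C\subset E_1[p]\times 0$ are the closed ones). So the map $X_{H,\Delta}\to X_H\times_{X_G}X_{\Kl}$ is not a closed immersion on the nose, and your finiteness argument as written does not close. The fix is easy: $X_{H,\Delta}$ is the toroidal compactification of the $H$-Shimura variety at level $K_H^p\cdot K_{H,\Delta}(p)$, hence proper over $\Zp$; any morphism between proper $\Zp$-schemes is proper, and your moduli description shows $\iota_1$ is quasi-finite on the open interiors, so proper $+$ quasi-finite gives finiteness. (Equivalently, observe that $X_{H,\Delta}\to X_H$ is finite because it arises from an inclusion of congruence subgroups, and then argue directly.) With this correction your argument goes through; the identification of $\iota_1^{-1}(X_{G,\Kl}^{\ge 1})$ in step (3) is fine as stated.
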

\begin{proof}
	See  \cite[\S 4.1]{LPSZ1}.
\end{proof}

We therefore obtain a finite map of dagger spaces $\iota_1:\cX_{H, \Delta}^{\ord} \to \cX_{G, \Kl}^{\ge 1}$ (whose image is contained in $\cX_{G, \Kl}^{\ord}$).

\begin{proposition}\label{prop:pullbackfactor}
	For any locally free coherent sheaf $\cE$ on $\cX_{\Kl}^{\ge 1}$, the natural pullback map
	\[ \iota_1^*: R\Gamma_c(\cX_{G, \Kl}^{\ge 1}, \cE) \to R\Gamma_c(\cX_{H, \Delta}^{\ord}, \iota^*\cE)\]
	factors through $R\Gamma_{c0}(\cX_{\Kl}^{\ord}, \cE)$.
\end{proposition}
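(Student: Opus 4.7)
The strategy is to exploit the factorization $\iota_1 = j \circ \iota_1'$, where $j\colon \cX_{G,\Kl}^{\ord} \hookrightarrow \cX_{G,\Kl}^{\ge 1}$ is the open dagger inclusion and $\iota_1'\colon \cX_{H,\Delta}^{\ord} \to \cX_{G,\Kl}^{\ord}$ is the finite map supplied by the preceding lemma. The essential geometric input is that the image of $\iota_1$ is contained in $\cX_{G,\Kl}^{\ord}$, and in particular avoids the locally closed stratum $\cX_{G,\Kl}^{=1}$ entirely.

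First, I would set up the localization short exact sequence of coherent sheaves on $\cX_{G,\Kl}^{\ge 1}$,
\[ 0 \to j_! j^* \cE \to \cE \to i_* i^* \cE \to 0, \]
where $i\colon \cX_{G,\Kl}^{=1} \hookrightarrow \cX_{G,\Kl}^{\ge 1}$ is the closed complement of $j$; local freeness of $\cE$ ensures the sub- and quotient sheaves are coherent. Applying compactly supported cohomology on $\cX_{G,\Kl}^{\ge 1}$ and identifying $R\Gamma_c(\cX_{G,\Kl}^{\ge 1}, j_! j^* \cE) = R\Gamma_{c0}(\cX_{G,\Kl}^{\ord}, \cE)$ (which is essentially the defining property of the $c_0$ cohomology) produces a distinguished triangle
\[ R\Gamma_{c0}(\cX_{G,\Kl}^{\ord}, \cE) \xrightarrow{\alpha} R\Gamma_c(\cX_{G,\Kl}^{\ge 1}, \cE) \xrightarrow{\beta} R\Gamma_c(\cX_{G,\Kl}^{=1}, i^* \cE) \xrightarrow{+1}. \]

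Next, I would pull this entire sequence of sheaves back along the finite (hence proper) map $\iota_1$. Because $\iota_1$ avoids $\cX_{G,\Kl}^{=1}$, one has $\iota_1^*(i_* i^* \cE) = 0$ at the sheaf level, and the inclusion $j_! j^* \cE \hookrightarrow \cE$ therefore becomes an isomorphism after $\iota_1^*$. The naturality of pullback produces a commutative diagram of distinguished triangles whose rightmost column has target $0$; in the long exact sequence in cohomology, commutativity then forces the composition $\iota_1^* \circ \beta^{-1}(\text{non-zero})$ to vanish, so every class in $R\Gamma_c(\cX_{G,\Kl}^{\ge 1}, \cE)$ whose image in $R\Gamma_c(\cX_{G,\Kl}^{=1}, i^* \cE)$ is nonzero is annihilated by $\iota_1^*$. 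Combined with the identification of the pullback of $R\Gamma_c(\cX_{G,\Kl}^{\ge 1}, j_! j^* \cE) = R\Gamma_{c0}(\cX_{G,\Kl}^{\ord}, \cE)$ with the pullback $(\iota_1')^*$ along the finite map $\iota_1'$, this gives the desired factorization of $\iota_1^*$ through $R\Gamma_{c0}(\cX_{G,\Kl}^{\ord}, \cE)$.

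The main obstacle is foundational rather than conceptual: one must verify that the localization short exact sequence and the associated distinguished cohomology triangle are well-behaved in the dagger/overconvergent framework of \cite[\S 9]{LZ20}, and in particular that extension by zero of a locally free coherent sheaf along the open dagger immersion $j$ is compatible with compactly supported cohomology in the sense used to define $R\Gamma_{c0}$. Once this foundational point is established, the factorization itself is a purely formal consequence of the geometric vanishing $\iota_1^*(i_* i^* \cE) = 0$ together with the naturality of pullback for triangles.
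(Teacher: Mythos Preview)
Your overall strategy is the correct one, and it is essentially what the cited reference \cite[Prop.~10.3.4]{LZ20} does: factor $\iota_1$ through the open inclusion $j\colon \cX_{G,\Kl}^{\ord}\hookrightarrow \cX_{G,\Kl}^{\ge 1}$, and use that the image of $\iota_1$ misses the closed stratum $\cX_{G,\Kl}^{=1}$. However, there is a genuine error in how you have set up the localisation triangle.

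The identification $R\Gamma_c(\cX_{G,\Kl}^{\ge 1}, j_! j^* \cE) = R\Gamma_{c0}(\cX_{G,\Kl}^{\ord}, \cE)$ is not correct. Extension by zero along $j$ forces vanishing near $\cX^{=1}$, so after taking $R\Gamma_c$ on $\cX^{\ge 1}$ (which already imposes compact support towards $\cX^{=0}$) one obtains the \emph{fully} compactly supported cohomology $R\Gamma_c(\cX_{G,\Kl}^{\ord}, j^*\cE)$, not $R\Gamma_{c0}$. As a consequence, in your triangle the arrow $\alpha$ points \emph{into} $R\Gamma_c(\cX^{\ge 1},\cE)$ rather than out of it, and no amount of vanishing on the $\cX^{=1}$ term will produce a factorisation of $\iota_1^*$ through its source.

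The triangle you actually want comes from the \emph{other} adjunction sequence, namely
\[
 i_*\, Ri^!\cE \;\longrightarrow\; \cE \;\longrightarrow\; Rj_* j^*\cE \;\xrightarrow{+1},
\]
which after applying $R\Gamma_c(\cX_{G,\Kl}^{\ge 1},-)$ yields
\[
 R\Gamma_{c,\,\cX^{=1}}(\cX_{G,\Kl}^{\ge 1},\cE)\;\longrightarrow\; R\Gamma_c(\cX_{G,\Kl}^{\ge 1},\cE)\;\longrightarrow\; R\Gamma_{c0}(\cX_{G,\Kl}^{\ord},\cE)\;\xrightarrow{+1},
\]
with the second arrow being restriction. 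Now the geometric input $\iota_1^{-1}(\cX^{=1})=\varnothing$ kills the first term under $\iota_1^*$, and the desired factorisation of $\iota_1^*$ through the restriction map follows. Your remark about the foundational care needed in the dagger setting applies equally here, and is handled in the reference; but the conceptual fix is to replace $(j_!,i^*)$ by $(Rj_*,Ri^!)$.
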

\begin{proof}
	This is \cite[Prop. 10.3.4]{LZ20}.
\end{proof}

\begin{proposition}
	Assuming Conjecture 10.2.3 in \cite{LZ20}, the extension-by-0 map
\[  H^3_{\rigfp, c}(X_{G,\Kl}^{\ge 1}\langle -D\rangle, \cV, n, P)\to  H^3_{\lrigfp}(X_{G,\Kl}\langle -D\rangle, \cV, n, P). \]
	induces an isomorphism on the $\Pif'$-eigenspace.
\end{proposition}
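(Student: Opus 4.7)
The plan is to mimic the argument used for the analogous step in the $\GSp(4)$ Euler system setting of \cite{LZ20}. Write $j: X_{G,\Kl}^{\ge 1} \hookrightarrow X_{G,\Kl,0}$ for the open immersion and $i: X_{G,\Kl}^{\le 0} \hookrightarrow X_{G,\Kl,0}$ for its complement. The extension-by-$0$ map fits into an exact triangle whose third term is the rigid fp-cohomology of the non-ordinary stratum $X^{\le 0}_{G,\Kl}$ with coefficients in $\cV$ and the polynomial $P$. It therefore suffices to check that both the kernel and the cokernel of the induced map vanish after projecting to the $\Pi_{\f}'$-eigenspace.

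To do this, I would split the argument according to the standard decomposition of (N-N or log-rigid) fp-cohomology into a filtered de Rham piece and a rigid/Hyodo--Kato piece carrying Frobenius; by the comparison of \S\ref{sect:lrigfpcomp} together with \cref{prop:extby0}, it is enough to prove the analogous vanishing statement for each piece separately. On the filtered de Rham side, the required statement is exactly a classicity assertion: the cohomology of the ordinary locus $\cX^{\ge 1}_{G,\Kl}$ with compact support away from $\cX^{=1}_{G,\Kl}$ agrees on the Klingen-ordinary $\Pi_{\f}'$-eigenspace with the cohomology of $X_{G,\Kl}$. This is the content of the classicity theorem of \cite[\S 3]{LPSZ1}, combined with \cref{prop:pullbackfactor}. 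On the rigid side, the analogous vanishing is a Frobenius-slope statement: the contribution of the non-ordinary stratum to rigid cohomology of $\cV$ has Newton slopes incompatible with those of the Klingen-ordinary $\Pi_{\f}'$-eigenspace, so the projection kills it.

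The main obstacle is precisely this rigid-side input, which is not established in general: it is exactly Conjecture~10.2.3 of \cite{LZ20}, and is assumed as a hypothesis in the statement. Once granted, the filtered de Rham and rigid inputs combine via the short exact sequences defining fp-cohomology (as recalled in \S\ref{sect:NNfp} and \S\ref{sect:lrigfpcomp}) to give the desired isomorphism on $\Pi_{\f}'$-parts. No new ingredients beyond those already developed in \cite{LZ20} are needed; the coefficient sheaf $\cV$ appears only through the use of Lieberman's trick, which is compatible with all the comparison maps and the Frobenius slope decomposition. In this sense the proof is a direct transcription of the corresponding argument in \emph{op.~cit.}, with the trivial coefficients replaced by $\cV$ throughout.
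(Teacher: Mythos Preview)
The paper's own proof is a one-line citation: ``See \cite[Prop.~10.4.3]{LZ20}.'' The statement here is literally the same as that proposition (with the same coefficient sheaf $\cV$ and the same level structure), so the paper offers no new argument; what you have written is a reconstruction of the argument of \cite{LZ20}, not of this paper.

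Your overall strategy---localisation triangle for the open/closed pair $(X_{G,\Kl}^{\ge 1}, X_{G,\Kl}^{\le 0})$, split the fp-complex into its Hodge-filtered de~Rham and Frobenius-equipped rigid constituents, use higher-Hida classicity for the former and the assumed slope conjecture for the latter---is the correct shape of the argument in \cite{LZ20}. Two details are off, however. First, $\cX_{G,\Kl}^{\ge 1}$ is \emph{not} the ordinary locus: it is the $p$-rank~$\ge 1$ stratum, containing the ordinary ($=2$) locus $\cX_{G,\Kl}^{\ord}$ as a proper open. The support condition in the statement is ordinary compact support on $X_{G,\Kl}^{\ge 1}$ (i.e.\ toward the supersingular stratum $X_{G,\Kl}^{=0}$ and the boundary $D$), not the ``$c0$'' condition on $\cX_{G,\Kl}^{\ord}$ that you describe. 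Second, your invocation of \cref{prop:pullbackfactor} is misplaced: that result is about the pullback $\iota_1^*$ to $\cX_{H,\Delta}^{\ord}$ factoring through $c0$-cohomology, and plays no role in the extension-by-$0$ isomorphism on the $G$ side. The de~Rham input you actually need is the control/classicity theorem identifying the Klingen-ordinary part of $H^*_c(\cX_{G,\Kl}^{\ge 1},\dots)$ with the corresponding classical eigenspace, which is in \cite[\S 3]{LPSZ1} and \cite[\S\S 10--11]{LZ20}.
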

\begin{proof}
	See \cite[Prop. 10.4.3]{LZ20}.
\end{proof}

   \begin{corollary}
	There is a unique class
	\[ \eta_{\rigfp,q, -D}^{\ge 1} \in H^3_{\rigfp, c}(X_{G,\Kl}^{\ge 1}\langle -D\rangle, \cV, 1+q, P)[\Pif'] \]
	which is in the $\Pif$-eigenspace for the prime-to-$p$ Hecke operators and maps to $\eta_{\dR, q,-D}$ under extension-by-0.
   \end{corollary}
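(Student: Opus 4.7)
The plan is to deduce this immediately by combining the NN-fp lift of the earlier lemma with the preceding proposition, following the same template as in the $\GSp(4)$ case (\cite[Cor. 10.4.4]{LZ20}).

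First, the lemma producing $\eta_{\NNfp, q, -D}$ gives a class in $H^3_{\NNfp}(X_{\Kl}, \cV(-D), 1+q; P_q)[\Pif']$ which, by construction, lies in the $\Pif'$-eigenspace for the prime-to-$p$ spherical Hecke algebra and maps to $\eta_{\dR,q,-D}$ under the forgetful map to de Rham cohomology. Transporting this class along the canonical comparison isomorphism of \cref{sect:lrigfpcomp} yields a class
\[
\eta_{\lrigfp, q, -D} \in H^3_{\lrigfp}(X_{G,\Kl}\langle -D\rangle, \cV, 1+q, P)[\Pif']
\]
with the analogous properties, since the comparison isomorphism is Hecke-equivariant and compatible with the de Rham realisation.

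Next, the preceding proposition asserts that (granting Conjecture 10.2.3 of \cite{LZ20}) the extension-by-$0$ map
\[
H^3_{\rigfp, c}(X_{G,\Kl}^{\ge 1}\langle -D\rangle, \cV, 1+q, P) \longrightarrow H^3_{\lrigfp}(X_{G,\Kl}\langle -D\rangle, \cV, 1+q, P)
\]
restricts to an isomorphism on $\Pif'$-eigenspaces. We define $\eta_{\rigfp, q, -D}^{\ge 1}$ to be the unique preimage of $\eta_{\lrigfp, q, -D}$ under this isomorphism. Existence is then immediate; uniqueness follows from the injectivity half of the isomorphism. Hecke-equivariance of the extension-by-$0$ map guarantees that this preimage automatically lies in the $\Pif$-eigenspace for the prime-to-$p$ Hecke action, and commutativity of the extension-by-$0$ map with the de Rham realisation ensures that the image maps to $\eta_{\dR, q, -D}$ as required.

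The only point where there is anything to verify is that the transported class $\eta_{\lrigfp, q, -D}$ really lies in the $\Pif'$-eigenspace of the lrig-fp cohomology, and this is immediate from Hecke-equivariance of the NN-fp/lrig-fp comparison. All genuine content has been absorbed into the previous proposition; once that is in place, the corollary is formal.
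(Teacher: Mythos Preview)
Your argument is correct and matches the paper's intended reasoning: the paper states this corollary without proof, treating it as the formal consequence of combining \cref{lem:etaNNfplift} (via the NN-fp/lrig-fp comparison of \cref{sect:lrigfpcomp}) with the preceding proposition, exactly as you do. This is the same template as \cite[Cor.~10.4.4]{LZ20}, and there is nothing further to add.
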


   \begin{note}
   	As shown in \cite[Prop. 12.3.1]{LZ20}, $\eta_{\rigfp,q, -D}^{\ge 1}$ lifts uniquely to an element in \emph{Gros fp-cohomology} (c.f. \cite[Prop. 9.2.20]{LZ20})
   	\[ \tilde{\eta}_{\rigfp,q, -D}^{\ge 1} \in \wH^3_{\rigfp, c}(\cX_{G,\Kl}^{\ge 1}\langle -\cD\rangle, \cV, 1+q; P_{q}), \]
   \end{note}

   \begin{definition}
   	\begin{enumerate}
   	 \item Write  $\tilde{\eta}_{\rigfp,q, -D}^{\ord}$ for the image of $ \tilde{\eta}_{\rigfp,q, -D}^{\ge 1} $ in
   	 $\wH^3_{\rigfp, c0}(\cX_{G,\Kl}^{\ord}\langle -\cD\rangle, \cV, 1+q; P_{q})$ under restriction to the ordinary locus. Here, $\wH^\bullet_{\rigfp, c0}(\cX_{G,\Kl}^{\ord},\,\sim\,)$ denotes Gros fp-cohomology of $\cX_{G,\Kl}^{\ord}$ with compact support towards $\cX_{G,\Kl}^{=0}$, as defined in \S 9.2.4 and Def. 12.2.1 in \emph{op.cit.}
	 \item  Write $ \tilde{\eta}_{\rigfp,q}^{\ord}$ for the image of  $\tilde{\eta}_{\rigfp,q, -D}^{\ord}$  in $\wH^3_{\rigfp, c0}(\cX_{G,\Kl}^{\ord}, \cV, 1+q; P_{q})$ under the `forget $-\cD$' map.
	\end{enumerate}
   \end{definition}

\begin{notation}
	For $i=1,2$, write $x^{(i)}_{\rigfp,\ord}$ for the image of $x^{(i)}_{\rigfp}$ in $H^1_{\rigfp}(Y_{\GL_2,\Gamma_0(p)}^{\ord},\Sym^{t_i}\sH_{\Qp},1+t_i;R_{i,t_i})$ under the restriction map $\res_{Y_{\GL_2,\Gamma_0(p)}^{\ord}}$. Denote by
	\[   \tilde{x}^{(i)}_{\rigfp,\ord}\in \wH^1_{\rigfp}(\cY_{\GL_2,\Gamma_0}^{\ord},\Sym^{t_i}\sH,1+t_i;S_{i,t_i})\]
	the image\footnote{More precisely, the image lies in fp-cohomology with $Q_{i, t_i}$, but this image lifts to a class with this simpler polynomial.} of $x^{(i)}_{\rigfp,\ord}$ in the Gros-fp cohomology of $\cY_{\GL_2,\Gamma_0}^{\ord}$. Here, we let %\footnote{We use here that the pairing on fp-cohomology is compatible with change of polynomial.}
	\[ S_i(T)=1-\frac{T}{\mathfrak{b}_i}.\]
\end{notation}

   Arguing as in \cite[\S 12.3]{LZ20}, we obtain the following result:

   \begin{proposition}\label{prop:restoGros}(c.f. Cor. 12.3.6 in \emph{op.cit.})
 	We have
 	\begin{align}
		 & \left\langle x^{(1)}_{\lrigfp}\sqcup x^{(2)}_{\lrigfp},\,  (\iota_{\Delta}^{[t_1,t_2]})^*(\eta_{\lrigfp,q, -D})\right\rangle_{\lrigfp,Y_{H,\Delta}}\\
 	  & \qquad \qquad = \left\langle \tilde{x}^{(1)}_{\rigfp,\ord}\sqcup  \tilde{x}^{(2)}_{\rigfp,\ord},\, (\iota_\Delta^{[t_1,t_2]})^*( \tilde{\eta}_{\rigfp, q,-D}^{\ord}|_{Y^{\ord}_{\Kl}})\right\rangle_{\widetilde{\rigfp},Y^{\ord}_{H,\Delta}}.\label{eq2}
 	\end{align}
   \end{proposition}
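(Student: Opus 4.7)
The plan is to mimic the derivation of Corollary 12.3.6 in \cite{LZ20} step by step, verifying that the arguments go through after replacing the Eisenstein classes by the general classes $x^{(i)}_{\lrigfp}$. The key observation is that the reduction of the $\GSp(4)$-regulator to a pairing over the ordinary locus in \emph{op.\ cit.} uses only formal properties of the cohomology classes attached to $\Pi$ (namely, the lifting $\eta_{\rigfp, q, -D}^{\ge 1}$ to the ordinary-parabolic locus, and its refinement to Gros fp-cohomology $\tilde{\eta}_{\rigfp, q, -D}^{\ge 1}$), together with general properties of the pullback $\iota_1^*$. The nature of the classes being pulled back on the $H$-side plays no role in that reduction.

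First I would use the extension-by-$0$ map (\cref{prop:extby0}) together with the isomorphism on $\Pif'$-eigenspaces from \cite[Prop.\ 10.4.3]{LZ20} to rewrite the log-rigid fp-cohomology pairing on $Y_{H, \Delta}$ as a pairing between $x^{(1)}_{\rigfp} \sqcup x^{(2)}_{\rigfp}$ and the pullback of $\eta^{\ge 1}_{\rigfp, q, -D}$. By the adjunction between pullback and pushforward on rig-fp cohomology, this equals a pairing on $X_{G, \Kl}^{\ge 1}$ between $\eta^{\ge 1}_{\rigfp, q, -D}$ and $(\iota_1)_\star(x^{(1)}_{\rigfp} \sqcup x^{(2)}_{\rigfp})$.

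Next, \cref{prop:pullbackfactor} shows that $\iota_1^\star$ on $\cX^{\ge 1}_{G, \Kl}$ factors through the ordinary locus with compact support away from $\cX^{=1}_{G, \Kl}$. Combined with adjunction, this means the pairing depends only on the image $\eta^{\ord}_{\rigfp, q, -D}$ of $\eta^{\ge 1}_{\rigfp, q, -D}$ in $H^3_{\rigfp, c0}(\cX_{G, \Kl}^{\ord}\langle -\cD\rangle, \cV, 1+q; P_q)$, paired with the restrictions $x^{(i)}_{\rigfp, \ord}$. Up to this point the argument is \emph{verbatim} that of \cite[\S 12.3]{LZ20}, with the Eisenstein classes replaced by $x^{(i)}_{\rigfp}$; crucially, the cuspidal modular forms $\mu_{i, \rigfp}$ (resp.\ the Eisenstein class in Case (B)) do restrict to classes on the ordinary locus because they are overconvergent-of-finite-slope.

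Finally, one promotes both sides to Gros fp-cohomology. On the $G$-side, this is provided by the Note after the corollary, which gives the canonical Gros lift $\tilde{\eta}^{\ord}_{\rigfp, q, -D}$. On the $H$-side one needs a Gros lift $\tilde{x}^{(i)}_{\rigfp, \ord}$ of each $x^{(i)}_{\rigfp, \ord}$; for the cuspidal classes $\mu_{i, \rigfp}$ this is standard ordinary refinement (the $U_p$-eigenvalue $\fb_i$ being the non-unit root gives the polynomial $\tilde{R}_i(T) = 1 - T/\fb_i$), and for the Eisenstein class in Case (B) the relevant Gros lift is constructed exactly as in \cite[\S 12]{LZ20} with polynomial $1-T$. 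The compatibility of the fp-cohomology pairing with change of polynomial (already used in \emph{op.\ cit.}) allows us to replace $R_i$ by $\tilde{R}_i$ after restriction. Assembling these ingredients and invoking the compatibility of the pairing with the specialisation/cospecialisation maps relating $R\Gamma_{\rigfp}$ and $\widetilde{R\Gamma}_{\rigfp}$ yields the identity \eqref{eq2}.

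The only step that requires genuine work beyond transcribing \cite[\S 12.3]{LZ20} is verifying that the cuspidal classes $\mu_{i, \rigfp}$ admit a canonical Gros-fp lift on the ordinary locus annihilated by $1 - T/\fb_i$, rather than the full Hecke polynomial $Q_i$. This is where the cuspidal case genuinely differs from the Eisenstein case, and it is the main place one must check that the argument goes through; however, once one has the classicity and ordinary-projection results of \cite[\S 3]{LPSZ1} and the relevant slope estimates, this is a direct computation, since $\mu_{i, \rigfp}$ becomes a $U_p$-eigenvector of the canonical Frobenius lift on the ordinary locus.
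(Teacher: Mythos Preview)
Your proposal is correct and follows essentially the same approach as the paper. The paper's own ``proof'' is simply the sentence ``Arguing as in \cite[\S 12.3]{LZ20}, we obtain the following result'', together with the earlier remark that ``This reduction is independent of whether we pair against Eisenstein or cuspidal classes''; your sketch unpacks exactly those steps (extension-by-0 onto $X_{G,\Kl}^{\ge 1}$, factorisation through $\cX^{\ord}_{G,\Kl}$ via \cref{prop:pullbackfactor}, and passage to Gros fp-cohomology via specialisation/cospecialisation), which is precisely what the reader is being asked to do.

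One small remark on your last paragraph: you flag the existence of a Gros-fp lift of $\mu_{i,\rigfp,\ord}$ with the smaller polynomial $1-T/\fb_i$ as the one place requiring genuine work, but the paper treats this more lightly. The class $\tilde{x}^{(i)}_{\rigfp,\ord}$ is simply \emph{defined} as the image of $x^{(i)}_{\rigfp,\ord}$ under the map to Gros fp-cohomology, and the change from $R_i$ to $\tilde{R}_i$ is justified by the footnote ``the pairing on fp-cohomology is compatible with change of polynomial''. So for the purposes of \cref{prop:restoGros} itself, no separate lifting argument is needed; the refined polynomial only becomes relevant in the subsequent explicit computations with coherent fp-pairs.
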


%%%%%%%%%%%%%%%%%%%%%%%%

 \subsection{Representation of the classes as coherent $\fp$-pairs}

  \subsubsection{Classes from $\Pi$}

    As in \cite[\S 5.2]{LPSZ1}, the pair $(r_1, r_2)$ determines algebraic representations $L_i$ of the Siegel Levi $M_{\Sieg}$, for $0 \le i \le 3$, all with central character $\diag(x, \dots, x) \mapsto x^{r_1 + r_2}$; and hence vector bundles $\cL_i = [L_i]_{\can}$ on $X_{K, \QQ}$ for any sufficiently small level $K$ (the canonical extensions of the corresponding vector bundles over $Y_{K, \QQ}$). Let $N^i = L_{3-i}$, and $\cN^i = \cL_{3-i}$ the corresponding vector bundles.

   \begin{proposition}
	\label{prop:etaproperties}
	There exists a unique class $\eta^{\ge 1}_{\coh,-D} \in H^2_c\left(\cX_{G,\Kl}^{\ge 1}, \cN^1(-D)\right)$ with the following two properties:
	\begin{enumerate}
		\item $U'_{\Kl, 2}$ acts on $\eta^{\ge 1}_{\coh,-D}$ as multiplication by  $\frac{\alpha\beta}{p^{r_2 + 1}}$.
		\item The image of $\eta^{\ge 1}_{\coh,-D}$ under the extension-by-zero map is $\eta^{\mathrm{alg}}_{-D}$.
	\end{enumerate}
	This class enjoys the following additional properties:
	\begin{enumerate}
		\item[(3)] The operator $U'_{\Kl, 1}$ acts on $\eta^{\ge 1}_{\coh,-D}$ as multiplication by $\alpha + \beta$.
		\item[(4)] The spherical Hecke algebra acts via the system of eigenvalues associated to $\Pi'$.
	\end{enumerate}
   \end{proposition}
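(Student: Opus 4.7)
The plan is to construct $\eta^{\ge 1}_{\coh,-D}$ by applying Hida's $U'_{\Kl,2}$-ordinary idempotent and identifying a unique preimage of $\eta^{\mathrm{alg}}_{-D}$ under extension-by-zero. Concretely, I would work with the localisation long exact sequence attached to the decomposition $X_{G,\Kl,0} = \cX_{G,\Kl}^{\ge 1} \sqcup \cX_{G,\Kl}^{=0}$:
\[ \cdots \to H^{1}(\cX_{G,\Kl}^{=0}, \cN^1(-D)) \to H^2_c(\cX_{G,\Kl}^{\ge 1}, \cN^1(-D)) \xrightarrow{j_!} H^2(X_{G,\Kl}, \cN^1(-D)) \to H^2(\cX_{G,\Kl}^{=0}, \cN^1(-D)) \to \cdots \]
and apply the idempotent $e'_{\Kl} := \lim_n (U'_{\Kl,2})^{n!}$ throughout. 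The existence of $e'_{\Kl}$ on these groups, together with the vanishing of $e'_{\Kl} H^\bullet(\cX_{G,\Kl}^{=0}, \cN^1(-D))$, is the critical input: it follows from the slope estimates for $U'_{\Kl,2}$ on coherent cohomology of the non-ordinary strata, which were the main technical content of the classicity theorem in \cite[\S 3]{LPSZ1}. Consequently $j_!$ becomes an isomorphism on $e'_{\Kl}$-parts.

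Next I would restrict to the $\Pif'$-isotypic component on the right-hand side. By the local computation at Klingen parahoric level (see \cite[\S 20.1]{LZ20}), the invariants $\sW(\Pi_p')^{\Kl(p)}$ are two-dimensional and carry two distinct $U'_{\Kl,2}$-eigenvalues, of which exactly one---namely $\alpha\beta/p^{r_2+1}$---is a $p$-adic unit, precisely by the Klingen-ordinarity assumption. Hence on the $\Pif'$-isotypic piece, $e'_{\Kl}$ is exactly the projector onto the eigenline with eigenvalue $\alpha\beta/p^{r_2+1}$; by construction $\eta^{\mathrm{alg}}_{-D}$ already lies in this eigenline (being the algebraic avatar of the Klingen-ordinary Whittaker vector $w'_{\Kl}$). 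Combining with the previous step produces $\eta^{\ge 1}_{\coh,-D}$ satisfying (1) and (2); uniqueness follows because the kernel $j_!^{-1}(0)$ is the image of $H^1(\cX_{G,\Kl}^{=0}, \cN^1(-D))$, on which $U'_{\Kl,2}$ cannot act by a $p$-adic unit.

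Properties (3) and (4) are then immediate consequences: the prime-to-$p$ spherical Hecke algebra acts on $\eta^{\ge 1}_{\coh,-D}$ through the eigensystem of $\Pi'_f$ by $\Pif'$-isotypicity, and the $U'_{\Kl,1}$-eigenvalue $\alpha + \beta$ is read off from the action on $w'_{\Kl}$, as in \cite[\S 20.1]{LZ20}. The main obstacle is the slope vanishing of $e'_{\Kl} H^\bullet(\cX_{G,\Kl}^{=0}, \cN^1(-D))$, but this is already the heart of the classicity theorem in \cite[\S 3]{LPSZ1}; the present proposition is essentially a repackaging of those classicity results together with the local Klingen-parahoric computations from \cite[\S 20]{LZ20}.
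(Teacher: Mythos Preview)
Your proposal is correct and matches the intended argument. The paper itself does not give a proof here but simply refers to \cite[Prop.~11.6.3]{LZ20}; your sketch---localisation sequence for the stratification $\cX_{G,\Kl}^{\ge 1} \sqcup \cX_{G,\Kl}^{=0}$, application of the Klingen-ordinary idempotent, and slope vanishing on the $=0$-stratum via the classicity results of \cite[\S 3]{LPSZ1}---is precisely the content of that cited proposition, and your deduction of properties (3) and (4) from the local description of $w'_{\Kl}$ is the right one.
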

   \begin{proof}
   	See \cite[Prop. 11.6.3]{LZ20}.
   \end{proof}

   \begin{definition}\
	 Define $\breve{\eta}^{\geq 1}_{\coh,q,-D}$ to be the image of $\eta^{\geq 1}_{\coh,-D}$ under the composition of maps
	\[  H^2_{c}(\cX_{G,\Kl}^{\geq 1}, \cN^1\langle -\cD\rangle)\rTo H^2_{c}(\cX_{G,\Kl}^{\geq 1}, \sFil^{r_2}\cV\otimes\Omega_G^1\langle -D \rangle)\rTo H^2_{c}(\cX_{G,\Kl}^{\geq 1}, \sFil^{q}\cV\otimes\Omega_G^1\langle -D \rangle),\]
	where the first map is given by the inclusion of complexes, and the second map is induced from the natural inclusion of sheaves.
   \end{definition}

  As shown in Prop. 12.3.7 in \emph{op.cit.}, every element in $\wH^3_{\rigfp, c0}(\cX_{G,\Kl}^{\ord}, \cV, 1+q; P_{q})$ can be uniquely represented by a pair of classes
  \[ (x,y)\in H^2_{c0}(\cX_{G,\Kl}^{\ord},\cN^0)\oplus H^2_{c0}(\cX_{G,\Kl}^{\ord},\cN^1)\]
  which satisfies $\nabla(x)=P_q(\varphi)\cdot y$ and $\nabla(y)=0$.

 \begin{lemma}(c.f. Proposition 12.4.5 in \emph{op.cit})
  The class $ \tilde{\eta}_{\rigfp,q}^{\ord}$ is represented by the pair of classes $\left( \breve{\eta}^{\ord}_{\coh,q},\breve\zeta\right)$, with $\breve\zeta\in  H^2_{c0}(\cX^{\ord}_{G,\Kl},\cV\otimes \Omega^0_G\langle \cD\rangle)$, which satisfy
   \[ \nabla \breve{\eta}^{\ord}_{\coh,q}=0\qquad\text{and}\qquad P_{q}(\Phi_{1+q})\,\breve{\eta}^{\ord}_{\coh,q}=\nabla\breve{\zeta}.\]
 \end{lemma}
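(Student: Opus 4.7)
The plan is to reproduce, essentially verbatim, the argument of Proposition 12.4.5 in \cite{LZ20}, since the class $\tilde{\eta}_{\rigfp,q}^{\ord}$ depends only on the $\GSp_4$-datum $\Pi$ and not on the auxiliary $\GL_2$-representations: it is literally the same class studied in the $\GSp(4)$-Euler system paper. The $\GL_2$-information enters only through the classes paired against $\tilde{\eta}_{\rigfp,q}^{\ord}$ later on. So the strategy is to recall the pair representation of Gros fp-cohomology and verify that the proposed pair is the correct one.

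The first step is to invoke Proposition 12.3.7 of \emph{op.\ cit.}: a class in $\wH^3_{\rigfp, c0}(\cX_{G,\Kl}^{\ord}, \cV, 1+q; P_{q})$ is the same datum as a pair $(x, y)$ with $\nabla(x) = P_q(\varphi) \cdot y$ and $\nabla(y) = 0$. We take $x = \breve{\zeta}$ and $y = \breve{\eta}^{\ord}_{\coh, q}$. The vanishing $\nabla \breve{\eta}^{\ord}_{\coh, q} = 0$ is structural: after projection to the appropriate Hodge-graded piece of the BGG-style complex for $\cV$, the component $\sFil^{q}\cV \otimes \Omega^1_G\langle -\cD\rangle$ receiving $\breve{\eta}^{\ord}_{\coh, q}$ is terminal within the $(\alpha,\beta)$-ordinary direct summand of the complex, so the induced $\cO$-linear differential automatically annihilates it.

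The main step is to produce the antiderivative $\breve{\zeta}$. By \cref{prop:etaproperties}, $U'_{\Kl, 2}$ acts on $\eta^{\ge 1}_{\coh, -D}$ as $\alpha\beta / p^{r_2+1}$ and $U'_{\Kl, 1}$ as $\alpha + \beta$. On the ordinary locus the operator $U'_{\Kl, 2}$ lifts the crystalline Frobenius $\Phi_{1+q}$, with the normalisation exactly that built into $P_q(T) = P(p^{1+q} T)$; and the Klingen-ordinary pair $(\alpha, \beta)$ gives the two unit-slope roots relevant to the ordinary stratum. A direct manipulation with the Hecke eigenvalue identities (identical to \S 12.4 of \cite{LZ20}) then shows that $P_q(\Phi_{1+q}) \breve{\eta}^{\ord}_{\coh, q}$ vanishes as a class in rigid cohomology of $\cX^{\ord}_{G,\Kl}$, hence is $\nabla$-exact, and we let $\breve{\zeta} \in H^2_{c0}(\cX^{\ord}_{G,\Kl}, \cV \otimes \Omega^0_G \langle \cD\rangle)$ be any primitive. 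The choice is unique up to a $\nabla$-closed term, which does not affect the Gros fp-class represented by the pair.

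The anticipated main obstacle is purely bookkeeping: tracking the interplay between the Hecke operator $U'_{\Kl, 2}$, the canonical Frobenius lift on the ordinary locus, the normalisation in $P_q$, and the compact-support conventions (the ``$c0$'' condition and the $\langle -\cD \rangle$ twist). None of this requires new ideas beyond those developed in \S\S 11--12 of \cite{LZ20}; the argument there carries over without modification once one observes that $\tilde{\eta}_{\rigfp,q}^{\ord}$ is the same object in all three cases (A), (B), (C).
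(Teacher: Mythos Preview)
Your proposal is correct and aligns with the paper's own treatment: the paper gives no proof for this lemma beyond the cross-reference to Proposition~12.4.5 of \cite{LZ20}, and your key observation---that $\tilde{\eta}_{\rigfp,q}^{\ord}$ depends only on the $\GSp_4$-datum $\Pi$, so the argument of \emph{op.\ cit.}\ applies verbatim---is exactly the point. Your sketch of how that argument runs (the structural vanishing of $\nabla\breve{\eta}^{\ord}_{\coh,q}$, and the construction of $\breve{\zeta}$ as an antiderivative using the Hecke eigenvalue relations from \cref{prop:etaproperties}) is faithful to the original.
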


%%%%%%%%%%%%%%

\subsubsection{Classes from $\Sigma_i$}

 We have a similar (but much simpler) result for $ \tilde{x}^{(i),\ord}_{\rigfp}$.

 \begin{lemma}
 	The class $\tilde{x}^{(i),\ord}_{\rigfp}$ lifts to an element
    \[ \tilde{x}^{(i),\ord}_{\rigfp,-D_{\GL_2}} \in \wH^1_{\rigfp}(\cX_{\GL_2,\Gamma_0}^{\ord}\langle -\cD_{\GL_2}\rangle,\Sym^{t_i}\sH_{\Qp},1+t_i;S_{i,t_i}).\]
    Here,  $S_i(T)=1-\mathfrak{b}_i^{-1}T$, where $\mathfrak{b}_1$ is as defined in Definition \ref{def:aibiEis} in case (B).
 \end{lemma}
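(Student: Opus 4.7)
The plan is to exhibit $\tilde{x}^{(i),\ord}_{\rigfp}$ as an explicit coherent fp-pair on the ordinary locus and then check directly that both components extend with compact support towards $\cD_{\GL_2}$, after replacing the polynomial $\tilde{R}_i$ by the simpler factor $S_i$. Concretely, by the degree-$1$ analogue of \cite[Prop.~12.3.7]{LZ20} applied to the modular curve, a class in $\wH^1_{\rigfp}(\cY^{\ord}_{\GL_2,\Gamma_0}, \Sym^{t_i}\sH, 1+t_i; P)$ is represented by a pair $(u,v)$ of coherent sections satisfying $\nabla u = 0$ and $P(\varphi_{1+t_i})\,u = \nabla v$. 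I would first unfold the construction of $x^{(i)}_{\NNfp}$ --- either $\mu_{i,\NNfp}$ in the cuspidal case, or $\Eis^{[t_1]}_{\syn,\Phi_1^p\Phi_{\crit}}$ in case (B) for $i=1$ --- through the specialisation to Gros fp-cohomology on the ordinary locus, identifying $u$ with the corresponding holomorphic modular form ($g_i^{\fa_i}$ in the cuspidal case, $F^{(t_1+2)}_{\Phi_1^p\Phi_{\crit}}$ in the Eisenstein case) and $v$ with its canonical Frobenius primitive.

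Second, I would invoke the observation recorded in the remark after \cref{prop:weprove}: on the ordinary locus the canonical lift of Frobenius acts on the $U_p=\fa_i$-eigencomponent in such a way that the factor $1-\varphi/\fa_i$ of $\tilde{R}_i$ becomes redundant, leaving the class annihilated by the single factor $S_i(\varphi)=1-\varphi/\fb_i$. Combined with the compatibility of fp-cohomology with change of polynomial (already exploited in the passage from $R_i$ to $\tilde{R}_i$), this allows the polynomial to be replaced by $S_i$ at the cost of possibly enlarging the primitive $v$.

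Third, I would verify that both $u$ and $v$ extend by zero across $\cD_{\GL_2}$. For $u$ this is essentially automatic: in the cuspidal case $g_i^{\fa_i}$ is a cuspform, and in the Eisenstein case the Schwartz function $\Phi_{\crit}$ is built precisely so that $F^{(t_1+2)}_{\Phi_1^p\Phi_{\crit}}$ vanishes at all multiplicative cusps (see \cite[\S 15.2]{LZ20}), which are the only cusps visible on the dagger space $\cX^{\ord}_{\GL_2,\Gamma_0}$. The vanishing of $v$ would then follow from the defining relation $\nabla v = S_i(\varphi)\,u$ by a $q$-expansion argument at each cusp, combined with the fact that $\varphi$ preserves the cuspidal subspace of $\Sym^{t_i}\sH$ on the ordinary formal neighbourhood.

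The main obstacle is the Eisenstein subcase: the remark after \cref{prop:weprove} warns that $\mu_{1,\dR}$ is in fact exact on the ordinary locus as an absolute class, so the genuine content of the lemma there is the existence of a \emph{cuspidal} primitive, not merely an overconvergent one. Making this precise at the level of Gros fp-cohomology demands the same boundary analysis carried out for the two $\GL_2$-Eisenstein factors in the $\GSp_4$ Euler system in \cite[\S 12]{LZ20}; I would expect essentially to port that argument factor-by-factor. The cuspidal subcase is then strictly easier, since $\mu_{i,\NNfp}$ lives naturally in compactly-supported cohomology from the outset and the desired lift can be obtained simply by restricting the global compactly-supported class to the ordinary locus.
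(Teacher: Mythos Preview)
Your approach is correct and aligned with the paper, but considerably more elaborate than what is needed. The paper's proof is two lines: the cuspidal cases are declared obvious (exactly your last sentence --- the class already lives in compactly-supported cohomology, so restriction suffices), and the Eisenstein subcase (case (B), $i=1$) is dispatched by a citation to \cite[\S 15.4]{LZ20} rather than \S 12. So your instinct that the cuspidal case is ``strictly easier'' and that the Eisenstein case is the only place with content matches the paper exactly; you have simply unpacked what the citation contains.

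Two small remarks. First, the explicit coherent fp-pair representation you propose to write down is not needed for \emph{this} lemma --- it is precisely the content of the \emph{next} lemma in the paper, so you are conflating two statements. For the present lemma you only need the existence of the lift, not its explicit form. Second, the relevant boundary analysis for the Eisenstein factor is in \S 15 of \cite{LZ20} (specifically \S 15.4), not \S 12; \S 12 sets up the Gros fp-cohomology formalism in general, while \S 15 handles the $\GL_2$ Eisenstein classes and their behaviour at the cusps, which is what you actually want here.
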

 \begin{proof}
	 There is nothing to prove here except in case (B) for $i=1$ where the result follows from the fact that $S_{i, t_i}(\varphi)$ annihilates the constant term of the Eisenstein series at ordinary cusps. This is the same argument as in \cite[\S 15.4]{LZ20} (the ``herb-chopper'' diagram).
 \end{proof}

 Let $u,\, v$ denote the basis of sections $\tilde{\omega}$ and $\tilde{u}$ over the Igusa tower, as constructed in \cite[\S 4.5]{KLZ20} (c.f. also \cite[\S 15.5]{LZ20}).

 \begin{definition}\
 	\begin{itemize}
 		\item  Let $g_i$ be the Eisenstein series $F^{t_1+2}_{\Phi^{(p)} \Phi_{\crit}}$ in case (B) for $i=1$, and let it be the normalized cuspidal eigenform in $\Sigma_i$ of $p$-level $\Gamma_0(p)$  with $U_p$-eigenvalue $\mathfrak{a}_i$ otherwise.
 		\item Let $G_i$ be the $p$-adic Eisenstein series $E^{-t_1}_{\Phi^{(p)} \Phi_{\dep}}$ in case (B) for $i=1$, and let it be the unique $p$-adic modular form with trivial constant term of weight $-t_i$ such that $\theta^{t_i+1}G_i=g_i^{[p]}$ otherwise.
 	\end{itemize}
 \end{definition}

 \begin{note}
	These definitions are consistent with each other, since in case (B) the Eisenstein series $F^{t_1+2}_{\Phi^{(p)} \Phi_{\crit}}$ is in the $U_p=p^{t_1+1}$ eigenspace and we have defined $\fa_1 = p^{t_1+1}$, so in both cases $g_i$ is a $U_p = \fa_i$ eigenvector.
 \end{note}

 \begin{lemma}
 	The class $ \tilde{x}^{(i),\ord}_{\rigfp,-D_{\GL_2}} $ is represented uniquely by the pair $(\epsilon_0^{(i)},\epsilon_1^{(i)})$ with
 	\[ \epsilon_0^{(i)}=\sum_{j=0}^{t_i}\frac{(-1)^jt_i!}{(t_i-j)!}\, \theta^jG_i\cdot v^{t_i-j}w^j \quad \text{and} \quad \epsilon_1^{(i)}=g_i\cdot v^{t_i}\otimes \xi,\]
 	where $\xi$ is as defined in \cite[\S 4.5]{KLZ20}.
 \end{lemma}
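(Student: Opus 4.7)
The plan is to verify that the pair $(\epsilon_0^{(i)},\epsilon_1^{(i)})$ is the unique representative of $\tilde{x}^{(i),\ord}_{\rigfp,-D_{\GL_2}}$ under the pair-description of Gros fp-cohomology on the ordinary locus, in close parallel with the Eisenstein-class computation of \cite[\S 15.5]{LZ20}.

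First I would recall the relevant pair description. A class in $\wH^1_{\rigfp}(\cX^{\ord}_{\GL_2,\Gamma_0}\langle-\cD_{\GL_2}\rangle,\Sym^{t_i}\sH,1+t_i;S_i)$ is uniquely represented by $(\epsilon_0,\epsilon_1)$ with $\epsilon_0$ a section of $\Sym^{t_i}\sH$ and $\epsilon_1$ a section of $\Sym^{t_i}\sH\otimes\Omega^1_{\log}(-D)$, satisfying $\nabla(\epsilon_0) = S_i(\varphi)\epsilon_1$ and $\nabla(\epsilon_1)=0$. Here $\varphi$ denotes the canonical Frobenius lift on the ordinary locus (equivalently, the $V_p$ operator on $q$-expansions, normalised via the unit-root splitting), and $\epsilon_1$ represents the underlying coherent cohomology class.

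Second, I would carry out the explicit computation of $\nabla(\epsilon_0^{(i)})$ in the $(v,w)$-basis on the Igusa tower. Using the Gauss--Manin formulae $\nabla v = w\otimes\xi$, $\nabla w=0$ (following the Kodaira--Spencer conventions of \cite[\S 4.5]{KLZ20}), and $d(f)=\theta(f)\cdot\xi$ on $q$-expansions, the Leibniz rule yields
\[
 \nabla(\theta^jG_i\cdot v^{t_i-j}w^j)
 =(\theta^{j+1}G_i)\cdot v^{t_i-j}w^j\otimes\xi
 +(t_i-j)(\theta^jG_i)\cdot v^{t_i-j-1}w^{j+1}\otimes\xi.
\]
Summing with the coefficients $c_j=(-1)^jt_i!/(t_i-j)!$, the key identity $c_{k-1}(t_i-k+1)=-c_k$ lets one combine the two sums, after reindexing, into a single telescoped expression whose only surviving contribution comes from the extremal summand and reduces via $\theta^{t_i+1}G_i=g_i^{[p]}$ to $g_i^{[p]}\cdot v^{t_i}\otimes\xi$. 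On the other hand, the canonical Frobenius lift $\varphi$ acts on the $g_i$-isotypic component of the relevant crystal via the eigenvalue $\fb_i$, so that $S_i(\varphi)\epsilon_1^{(i)}=(1-\fb_i^{-1}\varphi)(g_i\cdot v^{t_i}\otimes\xi)=g_i^{[p]}\cdot v^{t_i}\otimes\xi$ by the standard description of $p$-depletion for $U_p$-eigenforms on the ordinary locus; this matches the output of the $\nabla$-computation. The condition $\nabla\epsilon_1^{(i)}=0$ is automatic since $v^{t_i}\otimes\xi$ lies in the top piece of the Hodge filtration of $\Sym^{t_i}\sH\otimes\Omega^1$.

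Finally, uniqueness and identification with $\tilde{x}^{(i),\ord}_{\rigfp,-D_{\GL_2}}$ follow from multiplicity one for the $\Sif'$-isotypical component of coherent cohomology, together with the observation that the coherent class underlying $\epsilon_1^{(i)}$ is $g_i$ (viewed as a section of $\omega^{t_i+2}$), which is the de Rham representative of $\mu_{i,\dR}$ --- or of its critical-slope substitute in case (B). The main obstacle is the boundary behaviour in case (B) with $i=1$: here $G_1=E^{-t_1}_{\Phi^{(p)}\Phi_{\dep}}$ is a nearly-holomorphic Eisenstein series of negative weight, so the individual summands $\theta^jG_1\cdot v^{t_1-j}w^j$ are not holomorphic at the cusps, and one must verify that the specific linear combination defining $\epsilon_0^{(1)}$ does extend to a section over $\cX^{\ord}\langle-\cD_{\GL_2}\rangle$. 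This cancellation of polar parts, which dualises the telescoping in the $\nabla$-computation, is precisely the content of the lifting result quoted from \cite[\S 15.4]{LZ20} in the preceding lemma; in the other cases the summands are already cuspidal $p$-adic modular forms, so no additional care is needed.
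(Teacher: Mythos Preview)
Your approach is correct and is precisely the computation that the paper leaves implicit: the paper gives no proof of this lemma at all, relying tacitly on the parallel Eisenstein computation in \cite[\S 15.5]{LZ20} and the framework of \cite[\S 4.5]{KLZ20}. Your proposal spells out exactly that argument --- verify directly that $(\epsilon_0^{(i)},\epsilon_1^{(i)})$ satisfies the defining relations of a Gros fp-pair, using the explicit Gauss--Manin formulae on the Igusa tower and the identity $\theta^{t_i+1}G_i=g_i^{[p]}$ --- so there is nothing to contrast.

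Two small points are worth tightening. First, your justification ``$\varphi$ acts on the $g_i$-isotypic component via $\fb_i$'' is a cohomological statement, whereas the fp-pair relation $\nabla\epsilon_0^{(i)}=S_{i,t_i}(\varphi)\epsilon_1^{(i)}$ must hold at the level of forms; what one actually uses is that the canonical Frobenius lift acts on $v^{t_i}\otimes\xi$ by the factor $p^{1+t_i}$, which combines with $\varphi^* g_i = V_p g_i$ and $\fa_i\fb_i=p^{t_i+1}\chi_{\Sigma_i}(p)$ to produce $g_i^{[p]}$ (the diamond operator enters here and should be tracked). Second, the reason $\nabla\epsilon_1^{(i)}=0$ is not ``top Hodge piece'' but simply that $\Omega^2$ vanishes on a curve, so the Gauss--Manin differential out of degree one is zero. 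Neither point affects the validity of the strategy.
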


 \begin{note}
  We have $U_p(\epsilon_0^{(i)})=0$.
 \end{note}

  \begin{lemma}
	The class $ \tilde{x}^{(1)}_{\rigfp,\ord,-D_{\GL_2}}\sqcup  \tilde{x}^{(2)}_{\rigfp,\ord,-D_{\GL_2}}$ is represented by the coherent fp-pair
	\[ \left( \alpha_1,\alpha_2\right)\in H^{2,1}\left(\cX^{\ord}_{H,\Delta},\Sym^{t_1}\sH_{\Qp}\boxtimes \Sym^{t_2}\sH_{\Qp},2+t_1+t_2;S\right),\]
	where $S(y)=1-p^{2+t_1+t_2}\mathfrak{b}_1^{-1}\mathfrak{b}_2^{-1}\,y$ and
	\begin{align}
	\alpha_1 &= \epsilon^{(1)}_0\sqcup\epsilon^{(2)}_1 + \mathfrak{b}_1^{-1}p^{1+t_1}(  \varphi_{\GL_2}^*\boxtimes 1)\, \left(\epsilon^{(1)}_1\sqcup \epsilon^{(2)}_0\right),\\
	\alpha_2 & =  \epsilon^{(1)}_1\sqcup \epsilon^{(2)}_1.
	\end{align}
\end{lemma}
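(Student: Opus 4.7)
The plan is to apply the general cup-product formula for Gros rigid fp-cohomology pairs (developed in Section 9 of \cite{LZ20}) to the product $\cX_{H,\Delta}^{\ord}$, and then substitute the explicit representatives $(\epsilon_0^{(i)},\epsilon_1^{(i)})$ from the preceding lemma. In broad outline the computation reduces to a direct verification using the Leibniz rule and the defining relations for the two pairs; the work lies almost entirely in tracking the Tate twists and the Frobenius-eigenvalue factors.

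First I would recall that a class in $\wH^1_{\rigfp}(\cY_{\GL_2,\Gamma_0}^{\ord}\langle -\cD_{\GL_2}\rangle,\Sym^{t_i}\sH,1+t_i;S_i)$ is represented by a pair $(\epsilon_0^{(i)},\epsilon_1^{(i)})$ with $\epsilon_1^{(i)}$ a closed $1$-form in the twisted coefficients, and $\nabla\epsilon_0^{(i)}=S_i(\Phi_{1+t_i})\epsilon_1^{(i)}$. Since $\Phi_{1+t_i}=p^{-(1+t_i)}\varphi^{*}$ and $S_i(T)=1-\mathfrak{b}_i^{-1}T$, this reads $\nabla\epsilon_0^{(i)}=(1-\mathfrak{b}_i^{-1}p^{1+t_i}\varphi^{*})\epsilon_1^{(i)}$ in terms of the unnormalised Frobenius. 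The closedness of $\epsilon_1^{(i)}$ and the relation for $\epsilon_0^{(i)}$ can be checked directly from the explicit formulas, using $U_p(\epsilon_0^{(i)})=0$ (noted above) together with the identity $\theta^{t_i+1}G_i=g_i^{[p]}$ and the analogue in the Eisenstein case.

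Next I would invoke the recipe for the cup product of two fp-pairs on a product space: given $(\epsilon_0,\epsilon_1;S)$ and $(\epsilon_0',\epsilon_1';S')$, their cup product on the product is represented by a pair $(\alpha_1,\alpha_2)$ with $\alpha_2=\epsilon_1\sqcup\epsilon_1'$ (automatically closed), and $\alpha_1$ obtained from a Leibniz-type splitting. The standard choice $\alpha_1=\epsilon_0\sqcup\epsilon_1'$ alone fails to be closed modulo the target polynomial, since $\nabla(\epsilon_0\sqcup\epsilon_1')=S(\Phi)\epsilon_1\sqcup\epsilon_1'$ carries a spurious $S$-factor. One corrects this by adding a Frobenius-twisted copy of $\epsilon_1\sqcup\epsilon_0'$, with coefficient chosen so that its contribution under $\nabla$ combines with the first term to yield exactly $S(\Phi)S'(\Phi)\alpha_2$. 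In our setting the asymmetry is the one fixed by the splitting conventions of \cite[\S 9]{LZ20}, and the coefficient $\mathfrak{b}_1^{-1}p^{1+t_1}(\varphi_{\GL_2}^{*}\boxtimes 1)$ is precisely the factor making $S_1(\Phi_{1+t_1})$ collapse on $\epsilon_1^{(1)}$. Putting these together produces the formula for $\alpha_1$ in the statement, and the resulting product polynomial simplifies to $S(y)=1-p^{2+t_1+t_2}\mathfrak{b}_1^{-1}\mathfrak{b}_2^{-1}y$ since only the Frobenius eigenvalues $\mathfrak{b}_1,\mathfrak{b}_2$ survive in the cup product (the $S_1$-factor at $\mathfrak{a}_1$ is already absorbed by restriction to the ordinary locus).

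The bulk of the argument is then a direct computation of $\nabla\alpha_1$ via the Leibniz rule and the identity $\nabla\epsilon_0^{(i)}=S_i(\Phi_{1+t_i})\epsilon_1^{(i)}$, verifying $\nabla\alpha_1=S(\Phi_{2+t_1+t_2})\alpha_2$ and $\nabla\alpha_2=0$, together with a check that the Frobenius $\Phi_{2+t_1+t_2}$ on $\alpha_2$ acts with eigenvalue $\mathfrak{b}_1\mathfrak{b}_2/p^{2+t_1+t_2}$. The main obstacle I anticipate is purely bookkeeping: keeping the Tate-twist normalisations, the polynomial shifts $\tilde{R}_i\rightsquigarrow S_i$, and the Kunneth splitting consistent across the two factors. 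Provided one works uniformly with the conventions of \cite[\S 9]{LZ20}, no new idea is required beyond the Leibniz identity and bilinearity of the cup product.
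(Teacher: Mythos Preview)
Your approach is correct and is essentially the only reasonable one: the lemma is a direct application of Besser's cup-product formula for fp-pairs (the formalism recalled in \cite[\S 9]{LZ20}), specialised to the exterior product of the two linear-polynomial classes $(\epsilon_0^{(i)},\epsilon_1^{(i)})$. The paper states the lemma without proof, so there is nothing to compare against; the authors evidently regard it as immediate from the general formula.

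Two small points of bookkeeping in your write-up could be tightened. First, your remark that ``the $S_1$-factor at $\mathfrak{a}_1$ is already absorbed by restriction to the ordinary locus'' conflates two separate reductions: the passage from $R_i$ to the linear $S_i$ happened earlier (in the preceding lemma), so by the time you form the cup product both polynomials are already degree~$1$ with roots $\mathfrak{b}_i$; the product polynomial $S$ is then simply the $\star$-product of two linear factors, with no further cancellation needed. Second, the verification is not that $\Phi_{2+t_1+t_2}$ acts on $\alpha_2$ with a fixed eigenvalue, but rather that the pair $(\alpha_1,\alpha_2)$ satisfies $\nabla\alpha_1 = S(\Phi_{2+t_1+t_2})\,\alpha_2$; this follows, as you say, from the Leibniz rule together with the identity $(1-\lambda\mu\, xy) = (1-\lambda x) + \lambda x\,(1-\mu y)$, which produces exactly the coefficient $\mathfrak{b}_1^{-1}p^{1+t_1}(\varphi_{\GL_2}^*\boxtimes 1)$ in $\alpha_1$. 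With those clarifications the argument is complete.
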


%%%%%%%%%%%%%%%%%%%%%%%%%%%%%%%

 \subsection{Evaluation of the pairing}

  We can now evaluate the pairing \eqref{eq2}. As a first step, we reduce it to a pairing in coherent cohomology, analogous to \cite[Lemma 6.1.1]{LZ20}:

  \begin{lemma}
  	We have
  	\begin{align}
  	  & \left\langle \tilde{x}^{(1)}_{\rigfp,\ord}\sqcup  \tilde{x}^{(2)}_{\rigfp,\ord},\, (\iota_1^{[t_1,t_2]})^*( \tilde{\eta}_{\rigfp, q,-D}^{\ord}|_{Y^{\ord}_{\Kl}})\right\rangle_{\widetilde{\rigfp},Y^{\ord}_{H,\Delta}}\notag\\
  	  & \qquad   =  -\left\langle  \left(\iota^{[t_1,t_2]}_1\right)^*(\breve\zeta,\breve\eta^{\ord}_{\coh,q}),\,  \left( \alpha_1, \alpha_2\right)\right\rangle_{\coh-\mathrm{fp},\cX^{\ord}_{H,\Delta}}.\label{cohpairing}
  	\end{align}
  \end{lemma}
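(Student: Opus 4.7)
The plan is to apply the general formalism of pairings of Gros rigid finite-polynomial cohomology classes represented as coherent fp-pairs, as developed in \S 12 of \cite{LZ20}; the proof is structurally identical to Lemma 6.1.1 of \emph{op.~cit.}, with the Eisenstein representatives in the $\GSp(4)$-case replaced by the cuspidal (or mixed) representatives $\tilde x^{(i)}_{\rigfp,\ord}$. Crucially, the pair-complex formalism is insensitive to whether a given fp-class is represented by an Eisenstein class or by a cuspidal class, so the key identity transfers directly once the representatives on both sides of the pairing have been inserted.

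First, I would substitute the explicit coherent fp-pair representatives obtained in the preceding lemmas. The class $\tilde\eta^{\ord}_{\rigfp,q}$ (and, by naturality of the pair representation with respect to forgetting compact support, its $-\cD$ variant) is represented by $(\breve\eta^{\ord}_{\coh,q},\breve\zeta)$ with $\nabla\breve\eta^{\ord}_{\coh,q}=0$ and $P_q(\Phi_{1+q})\,\breve\eta^{\ord}_{\coh,q}=\nabla\breve\zeta$; and $\tilde x^{(1)}_{\rigfp,\ord,-D_{\GL_2}}\sqcup\tilde x^{(2)}_{\rigfp,\ord,-D_{\GL_2}}$ is represented by the pair $(\alpha_1,\alpha_2)$ with polynomial $S$. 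Since pullback in Gros fp-cohomology commutes with the pair representation, the class $(\iota_1^{[t_1,t_2]})^*(\tilde\eta^{\ord}_{\rigfp,q,-D}|_{Y^{\ord}_{\Kl}})$ is represented by the componentwise pullback $(\iota_1^{[t_1,t_2]})^*(\breve\eta^{\ord}_{\coh,q},\breve\zeta)$, viewed as a coherent fp-pair on $\cX^{\ord}_{H,\Delta}$.

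Next, I would invoke the explicit formula expressing the Gros fp-pairing of two classes represented by coherent fp-pairs as a pairing of such pairs in coherent cohomology, as in \S 12.3 of \cite{LZ20}. This pairing is given, up to an overall sign dictated by the Koszul convention for the two-term pair complex, by an explicit linear combination of coherent cup products of the components of the two pairs. Substituting our representatives and permuting $(\breve\eta^{\ord}_{\coh,q},\breve\zeta)\mapsto(\breve\zeta,\breve\eta^{\ord}_{\coh,q})$ to match the convention of the coherent fp-pairing on $\cX^{\ord}_{H,\Delta}$ produces precisely the right-hand side of the stated identity, including the overall factor of $-1$.

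The main obstacle, as in \cite{LZ20}, is not conceptual but bookkeeping: one must verify that the polynomials $P_q$, $S$, $R_1$, $R_2$ interact consistently so that the resulting pairing actually lands in the expected coherent fp-group, and that all Koszul signs are tracked correctly through the translation from Gros fp-cohomology to the two-term pair complex. The Klingen-ordinary hypothesis on $\Pi$ and the ordinary hypothesis on the $\Sigma_i$ ensure that the relevant Frobenius eigenspaces are one-dimensional and that the twisted polynomials reduce to linear factors $1-\lambda T$ on these eigenspaces, which renders the combinatorics tractable and follows the exact template of the $\GSp(4)$-case in \emph{op.~cit.}
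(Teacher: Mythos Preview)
Your proposal is correct and follows the same approach as the paper, which simply states the lemma as ``analogous to \cite[Lemma 6.1.1]{LZ20}'' without further argument; you have supplied exactly the intended reduction via coherent fp-pair representatives and the Gros fp-pairing formalism. One minor inaccuracy: you refer to an ``ordinary hypothesis on the $\Sigma_i$'', but no such hypothesis is in force (the ordering $(\fa_i,\fb_i)$ is arbitrary); this does not affect your argument, since the pair-complex formalism goes through regardless.
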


  We evalute this expression using Besser's formalism for cup products in fp-cohomology:  let
  \[ a(x,y)=p^{t_1+t_2+2}(\mathfrak{b}_1\mathfrak{b}_2)^{-1}\, y\]
  and
  \[ b(x,y)=\frac{P_q\left(p^{t_1+t_2+2}(\mathfrak{b}_1\mathfrak{b}_2)^{-1}\, xy\right)-p^{t_1+t_2+2}(\mathfrak{b}_1\mathfrak{b}_2)^{-1}\,yP_q(x)}{1-p^{t_1+t_2+2}(\mathfrak{b}_1\mathfrak{b}_2)^{-1}\,y},\]
  so we have
  \[ P_q\star R(xy)=a(x,y)P_q(x)+b(x,y)\left(1-p^{t_1+t_2+2}(\mathfrak{b}_1\mathfrak{b}_2)^{-1}\,y\right).\]
  Let $\Upsilon=\left(1-\frac{p^h}{\alpha\mathfrak{b}_1\mathfrak{b}_2}\right)\left(1-\frac{p^h}{\beta\mathfrak{b}_1\mathfrak{b}_2}\right)$.

  Then
  \begin{align*}
  & \Upsilon\times \left\langle  \left(\iota^{[t_1,t_2]}_1\right)^*(\breve\zeta,\breve\eta^{\ord}_{\coh,q}),\,  \left( \alpha_1, \alpha_2\right)\right\rangle_{\coh-\mathrm{fp},\cX^{\ord}_{H,\Delta}}\\
    &\qquad  =  \ a(\varphi_{H,1}^*\otimes 1,1\otimes \varphi_{H}^*)\, \left[(\iota^{[t_1,t_2]}_1)^*(\zeta)\cup \alpha_2\right]  + b(\varphi_{H,1}^*\otimes 1,1\otimes \varphi_{H}^*)\, \left[(\iota^{[t_1,t_2]}_1)^*(\eta^{\ord}_{\coh})\cup \alpha_1\right]\notag \\
    & \qquad = b(\varphi_{H,1}^*\otimes 1,1\otimes \varphi_H^*)\, \left[(\iota^{[t_1,t_2]}_1)^*(\breve\eta^{\ord}_{\coh,q})\cup \alpha_1\right],
  \end{align*}
  where the last equality follows by the same argument as in the proof of \cite[Prop. 16.1.2]{LZ20}.

  \begin{remark}
  	The factor $\Upsilon$ arises from the  normalisation of the trace map on finite-polynomial cohomology.
  \end{remark}

    Write $P(x)=1+c_1x+c_2x^2$; by definition, we have $c_2=(\alpha\beta)^{-1}$ and $c_1=-\frac{\alpha+\beta}{\alpha\beta}$. Then
    \[ b(x,y)=1-c_2\,p^{t_1+t_2+2}(\mathfrak{b}_1\mathfrak{b}_2)^{-1}\, x^2y.\]
   We  now identify $\varphi_{H,1}^*$ with $p^{-1}\varphi_H^*$. Expanding the terms, we obtain the following expression:

    \begin{corollary}
	We have
	\begin{align}
	\text{\eqref{eq2}} =\quad  & (\iota^{[t_1,t_2]}_1)^*(\breve\eta^{\ord}_{\coh,q})\cup \alpha_1-c_2\,p^{t_1+t_2}(\mathfrak{b}_1\mathfrak{b}_2)^{-1}\cdot \varphi_H^*\left[(\iota^{[t_1,t_2]}_1)^*\varphi_H^*(\breve\eta^{\ord}_{\coh,q})\cup \alpha_1\right]\notag\\
	=\quad&   (\iota^{[t_1,t_2]}_\Delta)^*(\breve\eta^{\ord}_{\coh,q})\cup \left(\epsilon^{(1)}_0\sqcup\epsilon_1^{(2)} \right) \label{1stterm}\\
	&  -c_2\,p^{t_1+t_2}(\mathfrak{b}_1\mathfrak{b}_2)^{-1}\, \varphi_H^*\left[(\iota^{(t_1,t_2)}_1)^*\varphi_H^*(\breve\eta^{\ord}_{\coh,q})\cup \left(\epsilon^{(1)}_0\sqcup \epsilon^{(2)}_1 \right)\right]\label{2ndterm}\\
	& +p^{t_1}\mathfrak{b}_1^{-1}\, (\iota^{[t_1,t_2]}_1)^*(\breve\eta^{\ord}_{\coh,q})\cup  \left(\varphi_{\GL_2}^*\epsilon^{(1)}_1\sqcup\epsilon_0^{(2)}\right)\label{3rdterm}\\
	& -c_2\,\mathfrak{b}_1^{-2}\mathfrak{b}_2^{-1}p^{2t_1+t_2+1}\left(\langle p\rangle^{-1}\boxtimes\mathbf{1}\right)\,\varphi_H^*\left[ (\iota^{[t_1,t_2]}_1)^*\varphi_H^*(\breve\eta^{\ord}_{\coh,q})\cup  \left(\varphi_{\GL_2}^*\epsilon^{(1)}_0\sqcup\epsilon_0^{(2)}\right)\right].\label{4thterm}
	\end{align}
   \end{corollary}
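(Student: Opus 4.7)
The plan is to deduce the identity by a direct expansion of the compact expression $b(\varphi_{H,1}^*\otimes 1,\, 1\otimes\varphi_H^*)\bigl[(\iota_1^{[t_1,t_2]})^*(\breve\eta^{\ord}_{\coh,q}) \cup \alpha_1\bigr]$ derived in the display immediately preceding the corollary statement. This reduces the proof to a bookkeeping computation carried out in three movements.

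First, I would unpack the polynomial $b(x,y)$. Using $P(X)=1+c_1 X+c_2 X^2$ together with the relation $P_q(X)=P(p^{1+q}X)$ and the identity $h=q+t_1+t_2+2$, a direct computation collapses the defining formula to $b(x,y) = 1 - c_2\, p^{t_1+t_2+2}(\mathfrak{b}_1\mathfrak{b}_2)^{-1} x^2 y$. After substituting $x=\varphi_{H,1}^*\otimes 1$ and $y=1\otimes\varphi_H^*$ and applying the identification $\varphi_{H,1}^* = p^{-1}\varphi_H^*$ (which comes from the Tate twist built into the rigid-fp formalism), the operator $x^2 y$ becomes $p^{-2}(\varphi_H^*)^2$ acting diagonally on $\cX^{\ord}_{H,\Delta}$. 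Pushing one copy of $\varphi_H^*$ through the pullback via the functoriality $(\iota_1^{[t_1,t_2]})^*\circ \varphi_H^* = \varphi_H^*\circ (\iota_1^{[t_1,t_2]})^*$ yields the two-line intermediate form displayed in the corollary, with the correct prefactor $-c_2 p^{t_1+t_2}(\mathfrak{b}_1\mathfrak{b}_2)^{-1}$.

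Second, I would substitute the decomposition $\alpha_1 = \epsilon_0^{(1)}\sqcup\epsilon_1^{(2)} + \mathfrak{b}_1^{-1}p^{1+t_1}(\varphi_{\GL_2}^*\boxtimes 1)(\epsilon_1^{(1)}\sqcup\epsilon_0^{(2)})$ from the preceding fp-pair lemma into each of the two lines. This produces four contributions: the ``untwisted'' part of $b$ paired with each summand of $\alpha_1$ gives \eqref{1stterm} and \eqref{3rdterm}, while the ``$c_2$'' part of $b$ paired with each summand of $\alpha_1$ gives \eqref{2ndterm} and \eqref{4thterm}.

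The only genuinely delicate piece is the fourth term, where an outer $\varphi_H^*$ must be composed with the inner $\varphi_{\GL_2}^*\boxtimes 1$ already built into $\alpha_1$. Writing $\varphi_H^* = \varphi_{\GL_2,1}^*\boxtimes\varphi_{\GL_2,2}^*$ and evaluating $\varphi_{\GL_2,1}^{*\,2}$ on classes with nebentype $\chi_{\Sigma_1}$ produces the diamond operator $\langle p\rangle^{-1}$ together with an extra factor of $p$, which combined with the pre-existing $p^{1+t_1}$ coming from $\alpha_1$ assembles into the coefficient $-c_2\mathfrak{b}_1^{-2}\mathfrak{b}_2^{-1}p^{2t_1+t_2+1}(\langle p\rangle^{-1}\boxtimes\mathbf{1})$ of \eqref{4thterm}. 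The main obstacle is therefore this last Frobenius commutation: every $\varphi$ in the rigid-fp calculus carries a hidden Tate twist, and the $\GL_2$ Frobenius fails to square to $p$ once the nebentype is non-trivial, so careful attention to the conventions established in the preceding sections is required. Once this step is in place, each of the four displayed lines falls out by a one-line substitution.
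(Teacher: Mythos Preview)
Your approach is correct and matches the paper's, which offers no proof beyond the phrase ``Expanding the terms'' preceding the corollary: the formula $b(x,y)=1-c_2\,p^{t_1+t_2+2}(\mathfrak b_1\mathfrak b_2)^{-1}x^2y$ and the identification $\varphi_{H,1}^*=p^{-1}\varphi_H^*$ are already recorded just before the statement, so all that remains is the mechanical substitution of the two summands of $\alpha_1$ into the two terms of $b$, exactly as you describe. Your account of the fourth term is more elaborate than anything the paper supplies; in particular the displayed formula keeps the outer $\varphi_H^*$ undistributed, so the appearance of $(\langle p\rangle^{-1}\boxtimes\mathbf 1)$ is not obtained via the Frobenius-squaring mechanism you sketch, and you should recheck that step (noting also that the coefficient already matches without any ``extra factor of $p$'').
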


  \begin{lemma}
     We have
     \begin{align*}
       \varphi_H^*\left[(\iota^{(t_1,t_2)}_1)^*\varphi_H^*(\breve\eta^{\ord}_{\coh,q})\cup \left(\epsilon^{(1)}_0\sqcup \mu_{\coh,-D_{\GL_2}}^{\ord} \right)\right]&=0\\
      \varphi_H^*\left[ (\iota^{[t_1,t_2]}_1)^*\varphi_H^*(\breve\eta^{\ord}_{\coh,q})\cup  \left(\varphi_{\GL_2}^*\epsilon^{(1)}_0\sqcup\epsilon_0^{(2)}\right)\right]&=0
    \end{align*}
  \end{lemma}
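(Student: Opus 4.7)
The crucial input is that every $\epsilon_0^{(i)}$ appearing in the lemma is annihilated by the Atkin operator $U_p$. In each case $\epsilon_0^{(i)}$ is a polynomial expression in the sections $\theta^{j}G_i$, and $G_i$ is by construction a $p$-depleted $p$-adic modular form: in the cuspidal case $\theta^{t_i+1}G_i=g_i^{[p]}$ has $U_p=0$, which transfers to $G_i$ itself (and hence to each $\theta^jG_i$) via commutativity of $U_p$ with $\theta$ and the vanishing of the constant term; in the Eisenstein case (case (B), $i=1$), the same property is built into $G_1=E^{-t_1}_{\Phi^{(p)}\Phi_{\dep}}$ by the choice of the depleted local Schwartz datum $\Phi_{\dep}$. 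This is exactly the note recorded just before the statement.

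The proof then follows the strategy of \cite[Prop.~16.1.2]{LZ20} essentially verbatim. On the ordinary dagger space $\cX^{\ord}_{H,\Delta}$, the Frobenius lift $\varphi_H^*$ factors as $\varphi_{\GL_2}^*\boxtimes\varphi_{\GL_2}^*$ on the two $\GL_2$-factors, each a finite degree-$p$ morphism whose Serre-dual trace map coincides (up to an explicit normalization) with $U_p$. For the first vanishing, applying this Frobenius--Verschiebung adjunction on the factor carrying $\epsilon_0^{(1)}$ moves the outer $\varphi_H^*$ across the cup product and rewrites the class in terms of $(\cdots)\cup U_p(\epsilon_0^{(1)})$, which is zero. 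For the second identity, the same adjunction is applied on the factor carrying $\epsilon_0^{(2)}$, converting the outer $\varphi_H^*$ into a $U_p$ acting on $\epsilon_0^{(2)}$; the additional $\varphi_{\GL_2}^*$ in front of $\epsilon_0^{(1)}$ plays no role, since the adjunction is executed on the other factor.

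The main technical obstacle is the careful bookkeeping of normalization scalars produced by Serre duality on the compactified ordinary locus, together with the interaction with the boundary divisor $\cD_{\GL_2}$ and with the pullback along $\iota_1$. These manipulations were performed in detail in \cite[\S 16]{LZ20} for the Eisenstein version of the identities; the substitution of cuspidal for Eisenstein input does not alter them, since the argument only invokes the identity $U_p\epsilon_0^{(i)}=0$ and is insensitive to the specific nature of the underlying modular form.
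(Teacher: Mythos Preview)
Your proposal is correct and matches the paper's own proof, which simply reads ``Analogous to the proof of \cite[Lemma 16.1.4]{LZ20}.'' The mechanism you describe---$U_p(\epsilon_0^{(i)})=0$ together with the Frobenius--trace adjunction on the relevant $\GL_2$-factor, with the cuspidal input replacing the Eisenstein one but otherwise leaving the argument unchanged---is exactly the intended one. One small correction: you cite \cite[Prop.~16.1.2]{LZ20}, but that reference was used for the \emph{previous} reduction (the vanishing of the $\zeta$-term); the correct pointer for this lemma is \cite[Lemma~16.1.4]{LZ20}.
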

  \begin{proof}
  	Analogous to the proof of \cite[Lemma 16.1.4]{LZ20}.
  \end{proof}

  We hence deduce the following formula for the pairing:

   \begin{proposition}\label{prop:red2}
	We have
	\[
	\text{\eqref{eq2}}=  (\iota^{[t_1,t_2]}_\Delta)^*(\breve\eta^{\ord}_{\coh,q})\cup \left(\epsilon^{(1)}_0\sqcup\epsilon_1^{(2)} \right)
	 +p^{t_1}\, \mathfrak{b}_1^{-1}\, (\iota^{[t_1,t_2]}_1)^*(\breve\eta^{\ord}_{\coh,q})\cup  \left(\varphi_{\GL_2}^*\epsilon^{(1)}_1\sqcup\epsilon_0^{(2)}\right).
	\]
   \end{proposition}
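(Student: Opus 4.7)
The proof is an immediate consequence of the preceding Corollary and Lemma: no new mechanism beyond those already established is needed.

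The plan is, first, to take the four-term expansion of \eqref{eq2} furnished by the Corollary, i.e.\ the sum of \eqref{1stterm}--\eqref{4thterm}. The terms \eqref{1stterm} and \eqref{3rdterm} are exactly the two summands appearing on the right-hand side of the proposition. It therefore suffices to show that the remaining terms \eqref{2ndterm} and \eqref{4thterm} vanish. But this is precisely what the Lemma provides: the two bracketed expressions in \eqref{2ndterm} and \eqref{4thterm} are identified (on identifying $\mu_{\coh,-D_{\GL_2}}^{\ord}$ with $\epsilon_1^{(2)}$, which is the coherent representative of $\tilde x^{(2),\ord}_{\rigfp,-D_{\GL_2}}$ in the $\epsilon_1$-slot) with the two vanishing pairings asserted in the Lemma. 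Multiplying these zeros by the scalar prefactors $-c_2\,p^{t_1+t_2}(\mathfrak{b}_1\mathfrak{b}_2)^{-1}$ and $-c_2\,\mathfrak{b}_1^{-2}\mathfrak{b}_2^{-1}p^{2t_1+t_2+1}(\langle p\rangle^{-1}\boxtimes \mathbf{1})$ preserves vanishing, and the claimed formula drops out.

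The real content sits inside the preceding Lemma rather than in the present proposition, and the authors defer it to the analogous argument in \cite[Lemma 16.1.4]{LZ20}. The key mechanism there is that $\epsilon_0^{(i)}$ lies in the kernel of the $U_p$-operator on the Igusa tower (as recorded in the Note preceding that Lemma), which combined with the Frobenius-lift compatibility of $\varphi_H^*$ and the fact that $\breve\eta^{\ord}_{\coh,q}$ is ordinary for the $U'_{\Kl,i}$ operators forces the relevant cup products over $\cX^{\ord}_{H,\Delta}$ to vanish. This input is insensitive to whether the $\GL_2$ classes originate from Eisenstein data or from cuspidal data, so the vanishing argument transports verbatim from the $\GSp(4)$ setting of \cite{LZ20} to cases (B) and (C) treated here. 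Accordingly the main obstacle in the whole reduction was the Frobenius manipulation carried out in the Corollary (the substitution $P_q \star R = aP_q + b(1-\cdot)$ and the subsequent identification $\varphi_{H,1}^* = p^{-1}\varphi_H^*$), rather than the present proposition, whose proof amounts to bookkeeping.
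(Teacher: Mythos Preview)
Your proof is correct and matches the paper's approach exactly: the proposition is stated in the paper immediately after the Lemma with only the phrase ``We hence deduce the following formula for the pairing,'' indicating that it follows by combining the four-term expansion of the Corollary with the two vanishing statements of the Lemma, precisely as you describe. Your additional commentary on the mechanism behind the Lemma (the $U_p$-vanishing of $\epsilon_0^{(i)}$ and the reference to \cite[Lemma 16.1.4]{LZ20}) is accurate and goes slightly beyond what the paper spells out, but is consistent with its citations.
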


    We now apply \cite[Cor. 14.2.4]{LZ20}.

\begin{note}\label{lem:Grbasis}
	For $0\leq \ell\leq t_1+t_2$, a basis of $\Gr^\ell V_H$ is given by
	\[\{ v^{t_1-i}w^{i}\boxtimes v^{t_2-j}w^{j}: 0\leq i\leq t_1,\, 0\leq j\leq t_2,\, i+j=t_1+t_2-\ell\}.\]
\end{note}

\begin{lemma}\label{lem:EsynimageinGr1}
	The image of $\epsilon^{(1)}_0\sqcup\epsilon_1^{(2)}$ under projection to $\Gr^{r_1-q}\left(\Sym^{t_1}\sH\boxtimes \Sym^{t_2}\sH\right)$ is given by
	\[ (-1)^{r_2-q}\frac{t!}{(q-r_2+t_1)!}\times \theta^{(q-r_2+t_1)}G_1\cdot  \,v^{r_1-r_2-r}w^{r_2-q}\boxtimes g_2\cdot (v^{t_2}\otimes \xi\otimes e_1).\]
\end{lemma}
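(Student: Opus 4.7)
The plan is a direct computation: both $\epsilon^{(1)}_0$ and $\epsilon^{(2)}_1$ are written as explicit sums of monomials in the Igusa-tower basis, so the projection to $\Gr^{r_1-q}$ picks out a single summand from the expansion of the cup product. No new tool is needed beyond the basis description of \cref{lem:Grbasis}.

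First I would record the interaction of the basis $v,w$ with the Hodge filtration: following \cite[\S 4.5]{KLZ20}, the line bundle $\omega = \sFil^1\sH$ is spanned by $v$ while $w$ projects to a generator of $\Gr^0\sH$. Consequently the monomial $v^{t_1-j_1}w^{j_1}\boxtimes v^{t_2-j_2}w^{j_2}$ lies in $\Gr^\ell(\Sym^{t_1}\sH\boxtimes\Sym^{t_2}\sH)$ exactly when $j_1+j_2 = t_1+t_2-\ell$, reproducing the basis recorded in \cref{lem:Grbasis}.

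Next I would analyse the $w$-bidegrees of the two factors. The class $\epsilon^{(2)}_1 = g_2\cdot v^{t_2}\otimes \xi$ is pure of second $w$-bidegree $j_2=0$, whereas $\epsilon^{(1)}_0$ is a sum over $j_1\in\{0,\dots,t_1\}$ of terms of first $w$-bidegree $j_1$. Hence the projection of $\epsilon^{(1)}_0\sqcup\epsilon^{(2)}_1$ to $\Gr^{r_1-q}$ extracts the unique summand with
\[ j_1 \;=\; t_1+t_2-(r_1-q) \;=\; r_2-q, \]
the last equality coming from $q = \tfrac12(r_1+r_2-t_1-t_2)$. The standing inequalities $|t_1-t_2|\le r_1-r_2\le t_1+t_2$ guarantee $0\le r_2-q\le t_1$, so this summand really appears in $\epsilon^{(1)}_0$.

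Finally, reading off this term from the explicit formula for $\epsilon^{(1)}_0$ yields coefficient $\tfrac{(-1)^{r_2-q}\,t_1!}{(t_1-r_2+q)!}$, the shift factor $\theta^{r_2-q}G_1$, and the monomial $v^{t_1-r_2+q}w^{r_2-q}$ in the first factor. Cupping with $g_2 v^{t_2}\otimes\xi$ and identifying $\xi$ with the canonical boundary generator denoted $e_1$ in the statement (via the trivialisation of \cite[\S 4.5]{KLZ20} and the pushforward conventions of \cite[\S 4]{LPSZ1}) then yields the formula in the lemma, after rewriting the $v$-exponent as $r_1-r_2-\tfrac{r_1-r_2-t_1+t_2}{2}$ if one wishes to match it literally to the form given. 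The only real obstacle is bookkeeping of the $v$- and $\theta$-exponents and fixing the orientation of $\xi$; the entire computation is the cuspidal analogue of the Eisenstein calculation performed for the $\GSp(4)$ Euler system in \cite[\S 16]{LZ20}.
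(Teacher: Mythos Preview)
Your approach---projecting term-by-term using the explicit Igusa-tower expansion of $\epsilon_0^{(1)}$ and the basis description in \cref{lem:Grbasis}---is exactly the intended one; the paper does not prove this lemma separately but treats it as the cuspidal analogue of the Eisenstein computation in \cite[\S 16]{LZ20}. Your identification of the surviving index $j_1 = t_1+t_2-(r_1-q) = r_2 - q$ and of the resulting coefficient $\tfrac{(-1)^{r_2-q}t_1!}{(t_1-r_2+q)!}$ is correct, and the filtration bookkeeping checks out.

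One caution, however. Your computation yields the scalar factor $\theta^{\,r_2-q}G_1$, whereas the lemma as printed has $\theta^{\,q-r_2+t_1}G_1$; these exponents are genuinely different. The printed statement already contains obvious typographical slips (the stray ``$t!$'', the undefined exponent $r$ in $v^{r_1-r_2-r}$), so one is tempted to dismiss this as another. But note that the downstream \cref{prop:linktoLPSZ} features $\theta^{\,q-r_2-1}g_1^{[p]} = \theta^{\,q-r_2+t_1}G_1$ (using $g_1^{[p]} = \theta^{t_1+1}G_1$), which is consistent with the paper's exponent, not yours. So either a further identification---coming from \cite[Cor.~14.2.4]{LZ20} and the unit-root trivialisation of the monomial $v^{t_1-r_2+q}w^{r_2-q}$---is being silently applied between your raw $\Gr$-projection and the form quoted in the lemma, or the lemma is already stated after that rewriting. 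You should not assert that your formula ``yields the formula in the lemma'' without isolating exactly where this extra $\theta$-shift enters; that is the only real content left to pin down.
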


\begin{lemma}
	\label{lem:EsynimageinGr2}
	The image of $\varphi_{\GL_2}^*\epsilon^{(1)}_1\sqcup\epsilon_0^{(2)}$ in $\Gr^{r_1-q}\left(\Sym^{t_1}\sH\boxtimes \Sym^{t_2}\sH\right)$ is given by
	\[
	(-1)^{r_2-q}\frac{t_2!}{(q-r_2+t_2)!}\times \left( \varphi_{\GL_2}^*g_1\cdot\,(v^{t}\otimes \xi\otimes e_1)\right)\boxtimes \left(\theta^{q-r_2+t_2}G_2\cdot v^rw^{r_2-q}\right).
	\]
\end{lemma}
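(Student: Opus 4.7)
The plan is to proceed in direct parallel with the proof of \cref{lem:EsynimageinGr1}, with the roles of the two $\GL_2$-factors interchanged and an additional Frobenius $\varphi_{\GL_2}^*$ acting on the first factor.

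First, by \cref{lem:Grbasis}, the target graded piece $\Gr^{r_1-q}\bigl(\Sym^{t_1}\sH\boxtimes\Sym^{t_2}\sH\bigr)$ is spanned by the monomials $v^{t_1-i}w^i\boxtimes v^{t_2-j}w^j$ subject to $i + j = t_1+t_2-(r_1-q) = r_2-q$, where I have used $q = \tfrac{1}{2}(r_1+r_2-t_1-t_2)$. So I would begin by writing $\varphi_{\GL_2}^*\epsilon_1^{(1)}\sqcup\epsilon_0^{(2)}$ in the $v,w$-basis and then read off the contributions to this bidegree.

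Next, since $\epsilon_1^{(1)} = g_1\cdot v^{t_1}\otimes\xi$ involves only a $v^{t_1}$-monomial on the $\Sym^{t_1}\sH$-factor, and $\varphi_{\GL_2}^*$ preserves the Hodge filtration on the ordinary locus, the class $\varphi_{\GL_2}^*\epsilon_1^{(1)}$ is pure in bidegree $i = 0$ on the first factor. Hence the constraint $i+j = r_2-q$ forces $j = r_2-q$ on the second factor, and only the $j = r_2-q$ summand in
\[
\epsilon_0^{(2)} \;=\; \sum_{j=0}^{t_2}\frac{(-1)^j\,t_2!}{(t_2-j)!}\,\theta^j G_2\cdot v^{t_2-j}w^j
\]
contributes. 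Extracting this summand and combining it with the first factor via the identification $\Sym^{t}\sH(-t)\cong \Sym^{t}\sH^\vee$ (so that $\xi$ pairs correctly against $w^{r_2-q}$) yields exactly the stated expression, after the same re-indexing of the $\theta$-exponent as was used in the proof of \cref{lem:EsynimageinGr1}.

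The main (mild) obstacle is the Frobenius bookkeeping on the ordinary locus: one must check that $\varphi_{\GL_2}^*$ does not introduce $w$-cross-terms in $\varphi_{\GL_2}^*\epsilon_1^{(1)}$, so that the bidegree calculation above is clean and no additional coefficients appear in \cref{lem:EsynimageinGr2} beyond those already absorbed into the $p^{t_1}\mathfrak{b}_1^{-1}$ prefactor of \cref{prop:red2}. This is the analogue on the ordinary locus of the unit-root splitting and follows from the explicit description of $\varphi$ acting on sections of $\omega$ over the Igusa tower. The remainder of the argument is the elementary monomial computation already carried out in \cref{lem:EsynimageinGr1}.
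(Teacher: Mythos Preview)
Your approach is correct and is exactly the natural direct computation: identify the relevant bidegree from \cref{lem:Grbasis}, use that $\varphi_{\GL_2}^*\epsilon_1^{(1)}$ is pure in the $v^{t_1}$-component on the ordinary locus (unit-root splitting), and then read off the single surviving summand $j=r_2-q$ from $\epsilon_0^{(2)}$. This is precisely what the paper has in mind.

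One remark: the paper does not actually supply a proof for either \cref{lem:EsynimageinGr1} or \cref{lem:EsynimageinGr2}; both are stated bare, as direct analogues of the corresponding computations in \cite[\S 16]{LZ20} (cf.\ the sentence ``We now apply \cite[Cor.~14.2.4]{LZ20}'' immediately preceding them). So your repeated appeals to ``the proof of \cref{lem:EsynimageinGr1}'' and ``the same re-indexing \dots\ as was used in the proof of \cref{lem:EsynimageinGr1}'' are references to something that is not in this paper. Your argument is essentially self-contained, so this is only a presentational issue; but you should either make the monomial extraction fully explicit here, or point instead to the analogous computation in \cite{LZ20}.
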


   \begin{proposition}
	\label{prop:linktoLPSZ}
	We have
	\begin{align*}
	(\iota^{[t_1,t_2]}_\Delta)^*&(\breve\eta^{\ord}_{\coh,q})\cup \left(\epsilon^{(1)}_0\sqcup\epsilon_1^{(2)}\right)\\
	& =\frac{(-1)^{r_2-q}\,t_1!}{(q-r_2+t_1)!}\times \frac{(-2)^q}{\tbinom{t_1}{r_2-q}}\times \left\langle \eta^{\ord}_{\coh,q},\,\iota^{p-adic}_\star\left( \theta^{q-r_2-1}g^{[p]}_1\sqcup g_2\right)\right\rangle\\
	& = (-1)^{r_2-q}(-2)^q\, (r_2-q)!\times \left\langle \eta^{\ord}_{\coh,q},\,\iota^{p-adic}_\star\left( \theta^{q-r_2-1}g^{[p]}_1\sqcup g_2\right)\right\rangle,
	\end{align*}
	and
	\begin{align*}
	&(\iota^{[t_1,t_2]}_\Delta)^*(\breve\eta^{\ord}_{\coh,q})\cup \left(\varphi_{\GL_2}^*\epsilon^{(1)}_1\sqcup\epsilon_0^{(2)}\right)\\
	&\quad =\frac{(-1)^{r_2-q}\, t_2!}{(q-r_2+t_2)!}\times \frac{(-2)^q}{\tbinom{t_2}{r_2-q}}\times \left\langle \eta^{\ord}_{\coh,q},\,\iota^{p-adic}_\star\left(\varphi_{\GL_2}^*g_1\boxtimes \theta^{q-r_2-1}g^{[p]}_2\right)\right\rangle\\
	& \quad = (-1)^{r_2-q}(-2)^q(r_2-q)!\times \left\langle \eta^{\ord}_{\coh,q},\,\iota^{p-adic}_\star\left(\varphi_{\GL_2}^* g_1\boxtimes \theta^{q-r_2-1}g^{[2]}_2\right)
	\right\rangle.
	\end{align*}
  \end{proposition}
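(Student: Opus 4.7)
The plan is to reduce the two cup-product expressions to pairings on the $H$-Shimura variety via the projection formula, then compute the resulting pairings by a direct application of the branching/degeneration formula \cite[Cor. 14.2.4]{LZ20}. I will treat the first identity in detail; the second is entirely analogous, with the roles of $\epsilon^{(1)}$ and $\epsilon^{(2)}$ swapped and an extra Frobenius twist on the $\GL_2$-side, which contributes a harmless character.

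First, since $\breve\eta^{\ord}_{\coh,q}$ lies in $H^2_{c0}\bigl(\cX_{G,\Kl}^{\ord}, \sFil^{q}\cV\otimes\Omega_G^1\langle -\cD\rangle\bigr)$, only the image of $\epsilon^{(1)}_0\sqcup\epsilon_1^{(2)}$ in the graded piece $\Gr^{r_1-q}\bigl(\Sym^{t_1}\sH_{\Qp}\boxtimes\Sym^{t_2}\sH_{\Qp}\bigr)$ can contribute to the pairing, by the usual argument that higher filtration steps pair to zero against a Fil-concentrated class and lower ones lie in the image of terms already accounted for. Applying \cref{lem:EsynimageinGr1}, I replace $\epsilon^{(1)}_0\sqcup\epsilon_1^{(2)}$ with its explicit graded representative
\[
(-1)^{r_2-q}\frac{t_1!}{(q-r_2+t_1)!}\, \theta^{q-r_2+t_1}G_1 \cdot v^{r_1-r_2-r}w^{r_2-q}\,\boxtimes\, g_2\cdot (v^{t_2}\otimes\xi\otimes e_1).
\]
The scalar prefactor $(-1)^{r_2-q}\tfrac{t_1!}{(q-r_2+t_1)!}$ already matches part of the target expression.

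Second, I apply \cite[Cor. 14.2.4]{LZ20}. That corollary expresses the pairing of the pullback $(\iota^{[t_1,t_2]}_1)^*(\breve\eta^{\ord}_{\coh,q})$ against a basis vector $v^{t_1-i}w^i\boxtimes v^{t_2-j}w^j$ in the distinguished basis from \cref{lem:Grbasis} as an explicit multiple of a coherent-cohomology pairing on $\cX_{G,\Kl}^{\ge 1}$ involving the $p$-adic pushforward $\iota^{p-adic}_\star$. The branching-law coefficient that appears there, for the basis vector $(i,j) = (r_2-q, 0)$ relevant to our situation, is $(-2)^q/\binom{t_1}{r_2-q}$; this is where the combinatorial factor in the statement originates. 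Combining this with the graded representative above, and using that $\theta^{q-r_2+t_1}G_1 = \theta^{q-r_2-1}(\theta^{t_1+1}G_1) = \theta^{q-r_2-1} g_1^{[p]}$ by the defining property of $G_1$, yields the first line of the stated identity.

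The simplification to the second line is the elementary identity
\[
\frac{t_1!}{(q-r_2+t_1)!}\cdot\frac{1}{\binom{t_1}{r_2-q}}
 = \frac{t_1!}{(q-r_2+t_1)!}\cdot\frac{(r_2-q)!\,(t_1+q-r_2)!}{t_1!}
 = (r_2-q)!.
\]
The main obstacle, and the only non-formal step, is the bookkeeping for the branching coefficient: one must track the correct normalisation of the basis vector $v^{t_1-i}w^i\boxtimes v^{t_2-j}w^j$ with $(i,j)=(r_2-q,0)$ against the conventions of \cite[\S 14]{LZ20} and verify that the combinatorial constant is indeed $(-2)^q/\binom{t_1}{r_2-q}$. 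The factor $(-2)^q$ reflects the $q$-fold contraction coming from the branching of $V(r_1,r_2)$ to $H$ along the Siegel weights, exactly as in \cite[\S 16]{LZ20}. Once that bookkeeping is done, the second identity follows by the same argument with $\epsilon^{(1)}_1$ and $\epsilon^{(2)}_0$ interchanged, using \cref{lem:EsynimageinGr2} in place of \cref{lem:EsynimageinGr1}; the Frobenius pullback $\varphi_{\GL_2}^*$ on $g_1$ is preserved through the computation and appears verbatim in the final formula.
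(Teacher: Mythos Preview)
Your proposal is correct and follows essentially the same approach as the paper: the paper's proof consists of the single sentence ``Analogous to the proof of \cite[Prop.~16.9.1]{LZ20}'', and your argument---projecting to the relevant graded piece via \cref{lem:EsynimageinGr1} (resp.~\cref{lem:EsynimageinGr2}), applying \cite[Cor.~14.2.4]{LZ20} to extract the branching constant $(-2)^q/\binom{t_i}{r_2-q}$, and using $\theta^{t_i+1}G_i = g_i^{[p]}$---is exactly how that proposition is proved. You have in fact spelled out more detail than the paper does.
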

  \begin{proof}
  	Analogous to the proof of \cite[Prop. 16.9.1]{LZ20}
  \end{proof}

  To relate the formula of \cref{prop:linktoLPSZ} to the $p$-adic coherent periods of Proposition \ref{prop:weprove}, we use the following useful lemma.

  \begin{proposition}\label{prop:weirdid}
   If $k_1, k_2$ are any integers (not necessarily positive) with $k_1 + k_2 = r_1 - r_2 + 2$, and $\cG_1, \cG_2$ are $p$-adic modular forms of weights $k_i$ such that $U_p(\cG_1) = 0$, then
   \[ \Big\langle \eta^{\ord}_{\coh}, \iota_{\star}\left( \cG_1 \boxtimes \varphi(\cG_2) \right)\Big\rangle = 0.\]
  \end{proposition}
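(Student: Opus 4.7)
The plan is to use adjointness of $\iota_{1,\star}$ and $\iota_1^*$, combined with the fact that on the ordinary locus $\cX_{H,\Delta}^{\ord}$ the canonical Frobenius lift $\varphi_H$ factors as $\varphi_{\GL_2} \boxtimes \varphi_{\GL_2}$ on the two $\GL_2$ components. This factorisation is forced by the description of $\cX_{H,\Delta}^{\ord}$ in terms of the diagonal canonical subgroup $C \subset (E_1 \oplus E_2)[p]$, so that dividing by $C$ in the product is the same as dividing by the canonical subgroup in each factor.

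The first step is to apply the adjunction between $\iota_{1,\star}$ and $\iota_1^*$ to rewrite the pairing as
\[
\bigl\langle \iota_1^*\eta^{\ord}_{\coh},\; \cG_1 \boxtimes \varphi(\cG_2) \bigr\rangle_{\cX_{H,\Delta}^{\ord}}.
\]
The standard identity $U_p \circ \varphi_{\GL_2} = \id$ on $p$-adic modular forms (or $p^{k-1}\cdot\id$ in the crystalline normalisation, depending on which $\varphi$ is in play) allows us to express
\[
\cG_1 \boxtimes \varphi(\cG_2)
 \;=\; \bigl(U_p \boxtimes \id\bigr)\bigl(\varphi(\cG_1) \boxtimes \varphi(\cG_2)\bigr)
 \;=\; \bigl(U_p \boxtimes \id\bigr)\,\varphi_H^*(\cG_1 \boxtimes \cG_2).
\]

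Next I would transfer the two operators $\varphi_H^*$ and $U_p \boxtimes \id$ across the Serre-duality pairing on $\cX_{H,\Delta}^{\ord}$ via their adjoints: $\varphi_H^*$ is adjoint to the trace map $\varphi_{H,\star}$ (which for the finite flat Frobenius has degree a power of $p$), and the partial Hecke operator $U_p \boxtimes \id$ is adjoint to a partial Frobenius $\varphi_{\GL_2} \boxtimes \id$ up to a normalising power of $p$. Combining these transfers with the commutation relations between partial Frobenii, partial Verschiebungen and the diagonal Frobenius $\varphi_H$ produces, on the $\cG$-side of the pairing, a factor of $U_p\cG_1 \boxtimes \cG_2$; this vanishes by the hypothesis $U_p\cG_1 = 0$. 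Structurally this is parallel to the argument in \cite[Lemma 16.1.4]{LZ20}, since the vanishing mechanism depends only on these Frobenius-Verschiebung identities and not on the automorphic input.

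The main obstacle is purely bookkeeping: tracking the normalisation factors (various powers of $p$ depending on $k_1$), verifying the precise Serre-dual adjoints of $\varphi_{\GL_2}$ and $U_p$ at $\Gamma_0(p)$-level, and carrying the argument out in the coherent fp-cohomology / Gros fp-cohomology formalism used throughout the section rather than just in ordinary coherent cohomology. Once one has assembled the relevant compatibilities (which are largely extracted from \S\S 7--9 and \S 12 of \cite{LZ20}), the conclusion is immediate from the rewriting above.
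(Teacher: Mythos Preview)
Your approach has a genuine gap: it never uses any property of $\eta^{\ord}_{\coh}$ as a class on $G$. After your rewriting $\cG_1 \boxtimes \varphi(\cG_2) = (U_p \boxtimes \id)\,\varphi_H^*(\cG_1 \boxtimes \cG_2)$ and adjunction, you are computing
\[
\bigl\langle \iota_1^*\eta^{\ord}_{\coh},\ (U_p \boxtimes \id)\,\varphi_H^*(\cG_1 \boxtimes \cG_2)\bigr\rangle_{\cX_{H,\Delta}^{\ord}},
\]
and you claim that transferring $(U_p\boxtimes\id)$ and $\varphi_H^*$ to the other side produces $U_p\cG_1$ on the $\cG$-side. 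It does not. Moving these operators across (using that $\varphi_H^* = \varphi_{\GL_2}^*\boxtimes\varphi_{\GL_2}^*$ and the usual $U_p$/$\varphi$ adjointness on each factor) leaves $\cG_1\boxtimes\cG_2$ on one side and an operator of the shape $(\id\boxtimes U_p)$, up to constants, applied to $\iota_1^*\eta^{\ord}_{\coh}$ on the other. Nowhere does $U_p\cG_1$ re-enter. In fact, if your argument worked it would show $\langle \xi,\cG_1\boxtimes\varphi(\cG_2)\rangle=0$ for \emph{every} class $\xi$ on $\cX_{H,\Delta}^{\ord}$, which is false.

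What is missing is exactly the $G$-side input: $\eta^{\ord}_{\coh}$ is a $U'_{2,\Kl}$-eigenvector with nonzero eigenvalue $\alpha\beta/p^{r_2+1}$. The paper exploits this via an identity of correspondences $X_{H,\Delta}^{\ord}(p)\rightrightarrows X_{G,\Kl}^{\ord}(p)$,
\[
Z' \circ \iota_1 \circ (\varphi, R_p)\ =\ U_2' \circ \iota_1 \circ (1, U_p),
\]
a slight sharpening of the relation used in \cite[Cor.~12.5.3]{LZ20}. One first inserts $\tfrac{p^{r_2+1}}{\alpha\beta}U_2'$ in front of $\eta$, pulls back to $H$, applies the identity to swap $(1,U_p)\circ\iota_1^*\circ U_2'$ for $(\varphi, p^{k_2-2}\langle p\rangle)\circ\iota_1^*\circ Z'$, and only then transfers $(\varphi,\cdot)$ back to the $\cG$-side to obtain $U_p(\cG_1)\boxtimes(\cdot)\cG_2 = 0$. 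The analogy with \cite[Lemma~16.1.4]{LZ20} is superficial; the actual mechanism is the interaction of the $G$-Hecke operator $U_2'$ with $\iota_1$, not purely $H$-side Frobenius/Verschiebung bookkeeping.
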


  \begin{proof}
   We shall prove this via a modification of the argument used in Corollary 12.5.3 of \cite{LZ20} (which was a related vanishing statement involving a ``$p$-depleted'' class on $G$, rather than on $H$ as here). We use the following identity of correspondences $X_{H, \Delta}^{\ord}(p) \rightrightarrows X_{G, \Kl}^{\ord}(p)$:
   \[ Z' \circ \iota_1 \circ (\varphi, R_p) = U_2' \circ \iota_1 \circ (1, U_p).\]
   Note that composing this with $(U_p, 1)$ gives the identity used \emph{loc.cit.}, and the new, slightly stronger identity can be proved by exactly the same argument.

    Since $\eta^{\ord}_{\coh}$ is an eigenvector for $U_2'$, with non-zero eigenvalue, we can argue that
   \begin{gather*}
    \Big\langle \eta, \iota_{1, \star}\left( \cG_1 \boxtimes \varphi(\cG_2) \right)\Big\rangle =
    \tfrac{p^{r_2 + 1}}{\alpha\beta} \Big\langle U_2' \cdot \eta, \iota_{1, \star}\left( \cG_1 \boxtimes \varphi(\cG_2) \right)\Big\rangle_{X_{G, \Kl}^{\ge 1}} \\
    = \tfrac{p^{r_2 + 1}}{\alpha\beta} \Big\langle ((1, U_p) \circ \iota_1^* \circ U_2') \, \eta, \cG_1 \boxtimes \cG_2 \Big\rangle_{X_{H, \Delta}^{\ord}} \\
    = \tfrac{p^{r_2 + 1}}{\alpha\beta} \Big\langle ((\varphi, p^{k_2 - 2} \langle p \rangle) \circ \iota_1^* \circ Z') \, \eta, \cG_1 \boxtimes \cG_2\Big\rangle_{X_{H, \Delta}^{\ord}}\\
    = \tfrac{p^{r_2 + 1}}{\alpha\beta} \Big\langle (\iota_1^* \circ Z')\, \eta, \left(U_p(\cG_1) \boxtimes p^{k_2-2} \langle p \rangle\cG_2\right)\Big\rangle_{X_{H, \Delta}^{\ord}} = 0.\qedhere
   \end{gather*}
  \end{proof}

  \begin{remark}
  	There is a similar statement with the roles of $\mathcal{G}_1$ and $\cG_2$ interchanged.
  \end{remark}

  \begin{corollary}
  	We have
  	\[ \text{\eqref{eq2}}=(-1)^{r_2-q}(-2)^q(r_2-q)!\times\left\langle \eta^{\ord}_{\coh,q},\,\iota^{p-adic}_\star\left( \theta^{q-r_2-1}g^{[p]}_1\sqcup g_2^{[p]}\right)\right\rangle\]
  	and hence
  	\begin{align*}
  	 &\left\langle \tilde{x}^{(1)}_{\rigfp,\ord}\sqcup  \tilde{x}^{(2)}_{\rigfp,\ord},\, (\iota_1^{[t_1,t_2]})^*( \tilde{\eta}_{\rigfp, q,-D}^{\ord}|_{Y^{\ord}_{\Kl}})\right\rangle_{\widetilde{\rigfp},Y^{\ord}_{H,\Delta}}\\
  	 &\qquad =\frac{(-1)^{r_2-q+1}(-2)^q(r_2-q)!}{\left(1-\frac{p^h}{\alpha\mathfrak{b}_1\mathfrak{b}_2}\right)\left(1-\frac{p^h}{\beta\mathfrak{b}_1\mathfrak{b}_2}\right)}\times\left\langle \eta^{\ord}_{\coh,q},\,\iota^{p-adic}_\star\left( \theta^{q-r_2-1}g^{[p]}_1\sqcup g_2^{[p]}\right)\right\rangle.
  	\end{align*}
  \end{corollary}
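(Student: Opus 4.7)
The plan is a short chain of reductions. I would start from the two-term coherent cup-product expression of \cref{prop:red2}, substitute the explicit evaluations of \cref{prop:linktoLPSZ} for each term, and then simplify using the ``$p$-depletion kills mixed classes'' principle of \cref{prop:weirdid}. The prefactor $(-1)^{r_2-q}(-2)^q(r_2-q)!$ emerges directly from \cref{prop:linktoLPSZ}, so no additional combinatorial work with binomials should be needed.

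Concretely, after this substitution the expression becomes $(-1)^{r_2-q}(-2)^q(r_2-q)!\bigl[A + p^{t_1}\mathfrak{b}_1^{-1}\, B\bigr]$, where $A=\langle \eta^{\ord}_{\coh,q},\iota^{p-adic}_\star(\theta^{q-r_2-1}g_1^{[p]}\sqcup g_2)\rangle$ and $B=\langle \eta^{\ord}_{\coh,q},\iota^{p-adic}_\star(\varphi_{\GL_2}^* g_1 \boxtimes \theta^{q-r_2-1}g_2^{[p]})\rangle$. First I would show $B=0$: its second slot $\theta^{q-r_2-1}g_2^{[p]}$ is $p$-depleted and hence annihilated by $U_p$, while its first slot is manifestly of the form $\varphi(\cdot)$, and the weight sum is $(t_1+2)+(t_2+2q-2r_2) = r_1-r_2+2$ using $2q=r_1+r_2-t_1-t_2$; so the variant of \cref{prop:weirdid} with the roles of $\cG_1,\cG_2$ swapped (stated in the remark) applies. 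For $A$, I would write $g_2 = g_2^{[p]} + \mathfrak{a}_2 V(g_2)$ using the standard decomposition of an ordinary $p$-stabilisation; the correction $\mathfrak{a}_2 V(g_2)$ is $\varphi$ of a $p$-adic modular form of weight $t_2+2$ up to a standard invertible scalar, the first slot $\theta^{q-r_2-1}g_1^{[p]}$ is $p$-depleted, and the same weight sum holds, so \cref{prop:weirdid} in its original form kills the correction. This leaves $A$ equal to the fully depleted pairing, and establishes the first displayed equation.

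For the ``and hence'' half, I would invoke the trace-map normalisation identity derived just before \cref{prop:red2}, namely $\Upsilon \cdot \langle (\iota_1)^*(\breve\zeta,\breve\eta^{\ord}_{\coh,q}),(\alpha_1,\alpha_2)\rangle_{\coh\text{-}\mathrm{fp}} = b(\ldots)[\ldots]$, together with the lemma identifying the Gros-fp pairing with $-1$ times the coherent-fp pairing. Chaining these produces both the sign flip $(-1)^{r_2-q}\rightsquigarrow(-1)^{r_2-q+1}$ and the factor of $\Upsilon$ in the denominator of the second displayed formula.

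The main obstacle is essentially bookkeeping: verifying the weight hypothesis $k_1+k_2=r_1-r_2+2$ in both applications of \cref{prop:weirdid} (straightforward from $2q=r_1+r_2-t_1-t_2$, but done in two distinct combinations of weights), and making precise the identification of $\mathfrak{a}_2 V(g_2)$ with $\varphi$ applied to a genuine $p$-adic modular form of the correct weight. The latter rests on the standard fact that $V$ agrees with the Frobenius $\varphi$ on $q$-expansions up to an invertible weight-dependent scalar; this scalar is absorbed harmlessly since \cref{prop:weirdid} is invariant under rescaling its arguments by an invertible constant.
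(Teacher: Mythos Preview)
Your proposal is correct and follows essentially the same route as the paper: kill the $B$-term via the swapped version of \cref{prop:weirdid}, then replace $g_2$ by $g_2^{[p]}$ in the $A$-term using \cref{prop:weirdid} together with the identity $g_2 - g_2^{[p]} = \tfrac{1}{\fb_2}\varphi^*(g_2)$ (equivalently your $\fa_2 V(g_2)$, up to the standard scalar you note). The second displayed formula then follows exactly as you describe, from the earlier $\Upsilon$-normalisation of the trace and the minus sign in the coherent-fp comparison lemma.
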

  \begin{proof}
  	Applying Proposition \ref{prop:weirdid} allows us to simplify both of the terms in Proposition \ref{prop:linktoLPSZ}: firstly, we have
  	\[ \left\langle \eta^{\ord}_{\coh,q},\,\iota^{p-adic}_\star\left(\varphi_{\GL_2}^* g_1\boxtimes \theta^{q-r_2-1}g^{[2]}_2\right)\right\rangle = 0;\]
  	secondly, we have
  	\[ \left\langle \eta^{\ord}_{\coh,q},\,\iota^{p-adic}_\star\left( \theta^{q-r_2-1}g^{[p]}_1\sqcup g_2\right)\right\rangle = \left\langle \eta^{\ord}_{\coh,q},\,\iota^{p-adic}_\star\left( \theta^{q-r_2-1}g^{[p]}_1\sqcup g_2^{[p]}\right)\right\rangle\]
  	since $g_2 - g_2^{[p]} = \tfrac{1}{\fb_2} \varphi^*(g_2)$.
  \end{proof}

  Arguing as in \cite[\S 16.5]{LZ20} completes the proof of Proposition \ref{prop:weprove}.

%%%%%%%%%%%%%%%%%%%%%%%%%%%%%%%%%%%%%%%%%%%%%%%%%%%%%%%%%%%%%%%%%%%%%%%%%%%%%%%%

%%%%%%%%%%%%%%%%%%%%%%%%%%%%%%%%%%%%%%%%%%%%%%%%%%%%%%%%%%%%%%%%%%%%%%%%%%%%%%%%
%

\newlength{\bibitemsep}
\setlength{\bibitemsep}{0.75ex plus 0.05ex minus 0.05ex}
\newlength{\bibparskip}
\setlength{\bibparskip}{0pt}
\let\oldthebibliography\thebibliography
\renewcommand\thebibliography[1]{%
 \oldthebibliography{#1}%
 \setlength{\parskip}{\bibparskip}%
 \setlength{\itemsep}{\bibitemsep}%
}

\newcommand{\noopsort}[1]{\relax}
%\bibliographystyle{../../amsalphaurl}
%\bibliography{../../references}

\providecommand{\bysame}{\leavevmode\hbox to3em{\hrulefill}\thinspace}
\providecommand{\MR}[1]{}
\renewcommand{\MR}[1]{%
 MR \href{http://www.ams.org/mathscinet-getitem?mr=#1}{#1}.
}
\providecommand{\href}[2]{#2}
\newcommand{\articlehref}[2]{\href{#1}{#2}}

\end{document}